

\documentclass[11pt]{article}
\textwidth 15cm
\textheight 22.0cm
\voffset -0.5cm
\hoffset -1.1cm
\baselineskip = 0.3cm 


\usepackage{graphicx}
\usepackage{amsmath,amsfonts,amsthm}
\usepackage{amssymb,latexsym}
\usepackage{fixmath}
\usepackage{mathrsfs,amsbsy}
\usepackage{dsfont}
\usepackage{enumerate}

\def\wtd{\widetilde}
\def\what{\widehat}

\DeclareMathOperator*{\argmin}{argmin}

\DeclareMathOperator{\diag}{diag}

\DeclareMathOperator{\rank}{rank}

\DeclareMathOperator{\F}{F}
\DeclareMathOperator{\HH}{H}
\DeclareMathOperator{\T}{T}

\def\ba{\pmb{a}}
\def\bb{\pmb{b}}
\def\bc{\pmb{c}}

\def\be{\pmb{e}}
\def\Bf{\pmb{f}}
\def\bg{\pmb{g}}
\def\bh{\pmb{h}}

\def\bl{\pmb{l}}
\def\bm{\pmb{m}}

\def\bp{\pmb{p}}
\def\bq{\pmb{q}}
\def\br{\pmb{r}}

\def\bt{\pmb{t}}

\def\bw{\pmb{w}}
\def\bx{\pmb{x}}
\def\by{\pmb{y}}
\def\bz{\pmb{z}}

\newtheorem{proposition}{Proposition}[section]
\newtheorem{theorem}{Theorem}[section]
\newtheorem{lemma}{Lemma}[section]
\newtheorem{corollary}{Corollary}[section]

\theoremstyle{definition}

\newtheorem{example}{Example}[section]

\numberwithin{equation}{section}
\numberwithin{figure}{section}
\numberwithin{table}{section}

\allowdisplaybreaks

\usepackage{kbordermatrix}

\usepackage{caption}
\usepackage{amsfonts,amsthm}
\usepackage{amsmath,amssymb,enumerate,color}
\usepackage{algorithm,algorithmic}
\usepackage{epsfig,graphicx,psfrag,mathrsfs,subfigure}
\usepackage{eucal}
\usepackage{yhmath}
\usepackage{diagbox}
\RequirePackage{multirow}
 \usepackage{cleveref}
\usepackage{textcomp}
\usepackage[]{fontenc}
\usepackage{extarrows}
\def\wtd{\widetilde}
\def\what{\widehat}
\usepackage{rotating}
\usepackage{lscape}
\usepackage{bm}
\usepackage{xurl}
\usepackage{xspace}
\usepackage{tikz}
\usetikzlibrary{petri} 
\usetikzlibrary{shadows,arrows,positioning,shapes.geometric} 

\usepackage{geometry}
\def\ba{\pmb{a}}
\def\bb{\pmb{b}}
\def\bc{\pmb{c}}

\def\be{\pmb{e}}
\def\Bf{\pmb{f}}
\def\BF{{\pmb{F}}}
\def\BW{{\pmb{W}_{\otimes}}}

\def\bg{\pmb{g}}
\def\bh{\pmb{h}}

\def\bl{\pmb{l}}

\def\bp{\pmb{p}}
\def\bq{\pmb{q}}
\def\br{\pmb{r}}

\def\bt{\pmb{t}}

\def\bw{\pmb{w}}
\def\bx{\pmb{x}}
\def\by{\pmb{y}}
\def\bz{\pmb{z}}

\def\diag{{\rm diag}}

\def\scrR{\mathscr{R}}

\def\wtd{\widetilde}
\def\what{\widehat}

\def\bbC{\mathbb{C}}

\def\bbP{\mathbb{P}}
\def\bbR{\mathbb{R}}

\renewcommand{\algorithmicrequire}{\textbf{Input:}}
\renewcommand{\algorithmicensure}{\textbf{Output:}}

\numberwithin{equation}{section}
\numberwithin{figure}{section}
\numberwithin{table}{section}

\graphicspath{{figures/}{figure/}{pictures/}%
	{picture/}{pic/}{pics/}{image/}{images/}}

\title{Rational minimax approximation of matrix-valued functions}

\author{Lei-Hong Zhang\thanks{Corresponding author. School of Mathematical Sciences, Soochow University, Suzhou 215006, Jiangsu, China. This work was
 supported in part by the National Natural Science Foundation of China (NSFC-12471356,  NSFC-12371380), Jiangsu Shuangchuang Project (JSSCTD202209), Academic Degree and Postgraduate Education Reform Project of Jiangsu Province, and China Association of Higher Education under grant 23SX0403. 
        Email: {\tt longzlh@suda.edu.cn}.} \and Ya-Nan Zhang\thanks{School of Mathematical Sciences, Soochow University, Suzhou 215006, Jiangsu, China. This work was
 supported in part by the National Natural Science Foundation of China NSFC-12171319. Email: {\tt ynzhang@suda.edu.cn}.} \and Chenkun Zhang\thanks{School of Mathematical Sciences, Soochow University, Suzhou 215006, Jiangsu, China. Email: {\tt 19816896524@163.com}.} \and Shanheng Han\thanks{School of Mathematical Sciences, Soochow University, Suzhou 215006, Jiangsu, China.   Email: {\tt 3468805603@qq.com}.}}
 
 \date{ }

\begin{document}

\maketitle

\begin{abstract}
In this paper, we present a rigorous framework for rational minimax approximation of matrix-valued functions that generalizes classical scalar approximation theory. Given sampled data $\{(x_\ell, {F}(x_\ell))\}_{\ell=1}^m$ where ${F}:\mathbb{C} \to \mathbb{C}^{s \times t}$ is a matrix-valued function, we study the problem of finding a matrix-valued rational approximant  ${R}(x) = {P}(x)/q(x)$ (with ${P}:\mathbb{C} \to \mathbb{C}^{s \times t}$ a matrix-valued polynomial and $q(x)$ a nonzero scalar polynomial of prescribed degrees)  that minimizes the worst-case Frobenius norm error over the given nodes:
$$
\inf_{{R}(x) = {P}(x)/q(x)} \max_{1 \leq \ell \leq m} \|{F}(x_\ell) - {R}(x_\ell)\|_{\F}.
$$
By reformulating this min-max optimization problem through Lagrangian duality, we derive a maximization dual problem over the probability simplex.  We analyze weak and strong duality properties and establish a sufficient condition ensuring that the solution of the dual problem yields the minimax approximant $R(x)$. For numerical implementation, we propose an efficient method (\textsf{m-d-Lawson}) to solve the dual problem, generalizing Lawson's iteration to matrix-valued functions. Convergence analysis of \textsf{m-d-Lawson} is established. Numerical experiments are conducted and compared to state-of-the-art approaches, demonstrating its efficiency as a novel computational framework for matrix-valued rational approximation.
\end{abstract}


\medskip
{\small
{\bf Key words. matrix-valued rational approximation, Lawson's iteration, duality, block AAA algorithm, rational Krylov fitting}    
\medskip

{\bf AMS subject classifications. 41A50, 41A20, 65D15,  90C46}
} 

\tableofcontents
 \clearpage

\section{Introduction}\label{sec_intro}
Let $F:\bbC\rightarrow \bbC^{s\times t}$ be a matrix-valued function defined on a subset of $\Omega$ in the complex plane.  For a given   set of distinct nodes  ${\cal X}=\{x_{\ell}\}_{{\ell}=1}^m$ in $\Omega$, suppose that we have the  discrete sampled data  $\{(x_{\ell},F(x_{\ell}))\}_{{\ell}=1}^m$ with 
\begin{equation}\label{eq:Ft}
F(x_{\ell})=\left[\begin{array}{ccc}f_{11}(x_{\ell}) &  \cdots & f_{1t}(x_{\ell})   \\\ \vdots& \ddots & \vdots  \\ f_{s1}(x_{\ell}) &  \cdots & f_{st}(x_{\ell})\end{array}\right]\in \bbC^{s\times t}.
\end{equation}
Let $\|F(x_{\ell})\|_{\F}$ be the Frobenius norm of $F(x_{\ell})$ and  $\bbP_{n}$ be the set of complex polynomials with degree less than or equal to $n$. In this paper, we consider the following discrete  rational  minimax approximation for the matrix-valued function $F(x)$:
\begin{equation}\label{eq:bestf0}
\eta_{\infty}:=\inf_{R \in\scrR_{(s,t)}}\max_{1\le \ell \le m}\|F(x_{\ell})-R(x_{\ell})\|_{\F}^2,
\end{equation}
where  
\begin{equation}\label{eq:rats}
\scrR_{(s,t)}:=\left\{R(x)=\left[\begin{array}{ccc}r_{11}(x) &  \cdots & r_{1t}(x)   \\\ \vdots& \ddots & \vdots  \\ r_{s1}(x) &  \cdots & r_{st}(x)\end{array}\right]  \Big| r_{ij}(x)=\frac{p_{ij}(x)}{q(x)}, p_{ij}\in \bbP_{n_{ij}},~0\not\equiv q\in\bbP_{d}\right\},
\end{equation}
$n_{ij}$ and $d$ are prescribed integers. The $(i,j)$th function $\frac{p_{ij}(x)}{q(x)}$ of $R(x)$ is said to be a rational function of type $(n_{ij},d)$ where $p_{ij}\in \bbP_{n_{ij}}$ is the $(i,j)$ entry of a matrix-valued polynomial $P:\bbC\rightarrow \bbC^{s\times t}$.  Moreover, we assume  w.l.g.  throughout the paper that
\begin{equation}\label{eq:numberm}
m\ge  \max_{1\le i\le s,1\le j\le t }\{n_{ij}+d+2\},
\end{equation}
since otherwise, $R \in \scrR_{(s, t)}$ simplifies to an interpolation problem for $\{(x_\ell, F(x_\ell))\}_{\ell=1}^m$.

It is clear that \eqref{eq:bestf0} generalizes the classical rational minimax approximation for complex-valued functions (e.g., \cite{elli:1978, gutk:1983, rutt:1985, sava:1977, this:1993, tref:2019a, will:1972, will:1979, wulb:1980, zhha:2025, zhyy:2025}), reducing to the classical case when $s = t = 1$. For clarity, we hereafter refer to \eqref{eq:bestf0} as the matrix-valued (or block) rational minimax approximation.
Following the setting of matrix-valued rational approximation \cite{begu:2015, begu:2017,gogu:2021, gust:2009, limp:2022,zhzy:2025}, we enforce a common denominator $q \in \bbP_d$ for all entries $r_{ij}(x)$ in \eqref{eq:bestf0}. Notably, the matrix-valued rational minimax approximation arises in diverse applications, including   the linear time-invariant system (e.g., \cite{begu:2017,gogu:2021}), frequency-domain multiport rational modeling (see e.g., \cite{demd:2009,gogu:2021,gust:2006,gust:2009,guse:1999}), computer-aided designing of the microwave duplexer \cite{trai:2010,trms:2007,zhzy:2025} and nonlinear eigenvalue problems \cite{guti:2017,gupt:2022,limp:2022,saem:2020}.

 In the literature, several numerical methods exist for rational  approximation of matrix-valued functions. Particularly, Gosea and Güttel  \cite{gogu:2021} investigated a range of efficient algorithms, including the  rational Krylov fitting (RKFIT) method \cite{begu:2017},  set-valued AAA \cite{limp:2022}, Loewner framework interpolation approach \cite{anli:2017,gova:2022,maan:2007}, and the vector fitting (VF) approach in \cite{drgb:2015,gust:2009}; additionally, they introduced a block-AAA extension of the AAA algorithm \cite{nase:2018}  and conducted extensive numerical experiments to evaluate these methods.  
 To approximate   three rational functions emerging in microwave duplexer filter design \cite{trai:2010,trms:2007}, a recent study \cite{zhzy:2025}  proposes a vector-valued AAA-type approach ({\sf v-AAA-Lawson}), building upon the AAA algorithm \cite{nase:2018} and AAA-Lawson method \cite{fint:2018}; {\sf v-AAA-Lawson} is also applicable to general matrix-valued rational approximations.

 However, we observe that most existing algorithms lack a rigorously defined optimization objective, leading to approximations with no guaranteed optimality. As an extension of   least squares, the absolute root mean squared error (RMSE)
\begin{equation}\label{eq:RMSE}
{\tt RMSE}=  \sqrt{\frac1m\sum_{\ell=1}^m\sum_{i=1}^s\sum_{j=1}^t\left|f_{ij}(x_\ell)-r_{ij}(x_\ell)\right|^2} = \sqrt{\frac1m\sum_{\ell=1}^m\Big\|F(x_{\ell})-R(x_{\ell})\Big\|_{\F}^2}
\end{equation}
or the relative one is commonly adopted as an accuracy measure for a computed approximation $R:\bbC\rightarrow \bbC^{s\times t}$ \cite{gogu:2021}. However, there is no general algorithm that guarantees convergence to the globally optimal solution with minimal RMSE (see \cite[A.5]{begu:2017}).
 
{\it Contributions}.
In this paper, with a rigorous and clear objective \eqref{eq:bestf0} in the minimax sense, we study the problem of finding the associated matrix-valued rational minimax approximant. Exploring  recent advances in duality-based treatments for the minimax approximation \cite{yazz:2023,zhha:2025,zhyy:2025}, we develop a numerical method to solve \eqref{eq:bestf0}. Our key contributions are as follows.
\begin{itemize}
\item[1).]  {Theoretical contributions:}
\begin{itemize}
\item reformulation of the minimax problem \eqref{eq:bestf0} into a single-level minimization via linearization, along with a detailed analysis of their theoretical connections;
\item derivation of a maximization dual problem of the linearization through Lagrange duality. We establish weak and strong duality between the linearized problem and its dual, and provide a sufficient condition (Theorem \ref{thm:strongdualityRuttan}) ensuring that the solution of the dual problem yields the best approximant for \eqref{eq:bestf0}.
\end{itemize}
\item[2).]  {Computational contributions:}
\begin{itemize}
\item an extension, namely {\sf m-d-Lawson}, of the \textsf{d-Lawson} iteration \cite{zhyy:2025} to solve \eqref{eq:bestf0} via the dual problem;
\item convergence analysis of {\sf m-d-Lawson} is established;
\item numerical validation demonstrating the efficiency of {\sf m-d-Lawson} against state-of-the-art approaches, qualifying it as a novel computational framework for matrix-valued rational approximation.
\end{itemize}
\end{itemize} 
 
%

 {\it Paper organization}. The structure of the remaining paper is summarized below.  Section \ref{sec_dual} introduces a linearization of \eqref{eq:bestf0} and discusses its relation with \eqref{eq:bestf0};  the Lagrangian duality is applied to this linearization, yielding a maximization dual problem. In Section \ref{sec_d2}, we focus on the implementation details of computing the dual function at a given dual variable; particularly, the Vandermonde with Arnoldi process \cite{brnt:2021} is introduced in this computation step. Section \ref{sec_strongduality} is devoted to strong duality between the linearization of \eqref{eq:bestf0} and the dual problem. Under strong duality, in Section \ref{sec:complement}, we develop a complementary slackness property at the maximizer of the dual problem and discuss its connection with the {\it extreme points} (aka {\it reference points}) of the best approximant of \eqref{eq:bestf0}. The new method  {\sf m-d-Lawson} is proposed in Section \ref{sec_lawson}, which generalizes the classical Lawson's iteration \cite{laws:1961} to the matrix-valued functions. The convergence analysis of {\sf m-d-Lawson} is presented in Section \ref{sec:convergence}. Section \ref{sec:numerical} provides numerical experiments comparing {\sf m-d-Lawson} with state-of-the-art methods for matrix-valued rational approximation, followed by detailed performance analysis. Finally, Section \ref{sec:conclusion} summarizes the key findings and conclusions.\\

{\it Notation}. 
Throughout this paper, we employ the following notation:
\begin{itemize}

 \item  ${\mathbb C}^{n\times m}$ (resp. ${\mathbb R}^{n\times m}$) is the set
of all $n\times m$ complex (resp. real) matrices and $\bullet^{\HH}$ (resp. $\bullet^{\T}$) represents the conjugate transpose (resp. transpose).


\item 
All real or complex numbers (or functions) are denoted by Latin (e.g., $a$) or Greek letters (e.g., $\alpha$). Column vectors are represented by boldface lowercase Latin (e.g., $\ba$) or Greek letters (e.g., $\bm{\alpha}$), where boldface lowercase symbols with subscripts (e.g., $\bp_{ij}$ and $\boldsymbol{\tau}_i$) also denote column vectors. Matrices are denoted by boldface or non-boldface uppercase Latin (e.g., $A$ and $\mathbf{A}$) or Greek letters (e.g., $\bm{\Phi}$); similarly, uppercase letters with subscripts (e.g., $F_{ij}$ and $\bm{\Psi}_{ij}$) also represent matrices. For simplicity, we sometimes express column vectors and matrices in terms of their elements, as $\ba = [a_1,\ldots,a_n]^{\T}$ and $A = [a_{ij}]$, respectively.

\item   $I_n\equiv [\be_1,\be_2,\dots,\be_n]\in\bbR^{n\times n}$ represents the $n$-by-$ n$ identity matrix, where $\be_i$ is its $i$th column with $1\le i\le n$. The vector $\mathbf{1}_g = [1,\dots,1]^{\T} \in \bbR^g$ is the $g$-dimensional all-ones column vector.

\item 
 ${\tt i}=\sqrt{-1}$ represents the imaginary unit, and for $\mu\in \bbC$, we write $\mu= \mu^{\tt r}+ {\tt i} \mu^{\tt i}$ and $\bar\mu= \mu^{\tt r}- {\tt i} \mu^{\tt i}$ where ${\rm Re }(\mu)=\mu^{\tt r}\in \bbR,~{\rm Im }(\mu)=\mu^{\tt i}\in \bbR$ and $|\mu|=\sqrt{(\mu^{\tt r})^2+(\mu^{\tt i})^2}$.

\item  $\diag(\bx)=\diag(x_1,\dots,x_n)$   denotes the diagonal matrix associated with the vector $\bx\in\bbC^n$, and   $\bx./\by=\frac{\bx.}{\by}=[x_1/y_1,\dots,x_n/y_n]$ for two vectors $\bx,\by\in \bbC^n$ with $y_j\ne 0,~1\le j\le n$. 

\item  ${\rm span}(A)$ is a subspace spanned by columns of $A$; the  $k$th Krylov subspace generated by a matrix $A$ on $\bb$ is denoted by
$
{\cal K}_k(A,\bb)={\rm span}(\bb,A\bb,\dots,A^{k-1}\bb).
$


\end{itemize}

\section{The dual problem of the matrix-valued rational minimax approximation} \label{sec_dual}
\subsection{Representation of   rational functions}\label{sbsec:representation}
In order to represent a rational function $r_{ij}(x)=\frac{p_{ij}(x)}{q(x)}$, we can choose basis functions $\{\psi_0(x),\dots,\psi_{n_{ij}}(x)\}$ and $\{\phi_0(x),\dots,\phi_{d}(x)\}$ to parameterize 
\begin{equation}\nonumber
r_{ij}(x)=\frac{p_{ij}(x)}{q(x)}=\frac{[\psi_0(x),\dots,\psi_{n_{ij}}(x)] \ba_{ij}}{[\phi_0(x),\dots,\phi_{d}(x)] \bb},~~\mbox{for ~some~} \ba_{ij}\in \bbC^{n_{ij}+1},~ \bb\in \bbC^{d+1}.
\end{equation}  
In principle, $\{\psi_0(x),\dots,\psi_{n_{ij}}(x)\}$ and $\{\phi_0(x),\dots,\phi_{d}(x)\}$ can be any basis functions for $\bbP_{n_{ij}}$ and $\bbP_{d}$, respectively. Besides the   monomial bases,  the barycentric representation for the rational functions \cite{fint:2018,gogu:2021,nase:2018} can also be a choice. While any parameterization that is able to represent the rational functions is mathematically equivalent, numerically, particular implementations and techniques should be concerned to deal with the numerical stability. In this paper, we will mainly use the monomial  basis $\psi_i(x)=\phi_i(x)=x^{i}$ and rely on the Vandermonde with Arnoldi  process \cite{brnt:2021,hoka:2020,zhsl:2023} to handle the ill-conditioned problems inherited from the Vandermonde   matrix at nodes ${\cal X}=\{x_\ell\}_{\ell=1}^m$; in particular, for $p_{ij}(x)$ in the rational  function $r_{ij}(x)=\frac{p_{ij}(x)}{q(x)}$, we denote  by 
\begin{equation}\label{eq:basisVmatrix}
{\bf \Psi}_{ij}={\bf \Psi}_{ij}(x_1,\dots,x_m;n_{ij}):=\left[\begin{array}{cccc}\psi_0(x_1) & \psi_1(x_1) & \cdots & \psi_{n_{ij}}(x_1) \\\psi_0(x_2) & \psi_1(x_2) & \cdots & \psi_{n_{ij}}(x_2)  \\ \vdots & \cdots & \ddots& \vdots  \\\psi_0(x_m) & \psi_1(x_m) & \cdots & \psi_{n_{ij}}(x_m) \end{array}\right]\in \bbC^{m\times (n_{ij}+1)}
\end{equation}  
the associated Vandermonde matrix; therefore, 
\begin{equation}\label{eq:vecpij}
\bp_{ij}:=[p_{ij}(x_{1}),\dots, p_{ij}(x_{m})]^{\T}={\bf \Psi}_{ij}\ba_{ij}\in \bbC^{m}. 
\end{equation}
Similarly, we denote by ${\bf \Phi}={\bf \Phi}(x_1,\dots,x_m;d)\in \bbC^{m\times (d+1)}$ the associated Vandermonde matrix for the denominator $q(x)$ at ${\cal X}$, whose $(i,j)$th entry is $\phi_{j-1}(x_i)$; thus  
\begin{equation}\label{eq:vecq}
\bq:=[q(x_{1}),\dots, q(x_{m})]^{\T}={\bf \Phi}\bb\in \bbC^{m}. 
\end{equation}

Given an $R(x) \in\scrR_{(s,t)}$, if the associated maximum approximation error
\begin{equation}\label{eq:exi}
e(R):=\max_{x_{\ell}\in {\cal X}}\|F(x_{\ell})-R(x_{\ell})\|_{\F}^2
\end{equation} 
is finite, then we can assume w.l.g. that $q(x_j)\ne 0,~\forall x_j\in {\cal X}$. To see this,  note that  if $q(z)=0$ for some $z \in \cal X$, then $z$ must be a zero for all the numerators $p_{ij}(x)$ with $1\le i\le s$ and $1\le j\le t$, because otherwise $e(R)=\infty$. This implies that zeros $z$ of $q(x)$ in ${\cal X}$ can be canceled in all $p_{ij}(x)/q(x)$, and the values $r_{ij}(z)$ are well-defined through cancellation of these common zeros (i.e., this induces an equivalence in $\scrR_{(s,t)}$). Due to this fact, if a solution  exists for \eqref{eq:bestf0}, there is an $R \in \scrR_{(s,t)}$ such that $q(x_j)\ne 0$ for any $x_j\in {\cal X}$.  
 
\subsection{The Lagrange dual problem and weak duality}\label{sbsec:dual}
Following  \cite{zhyy:2025}, a key observation for addressing \eqref{eq:bestf0} is   the following equivalence:
$$\max_{1\le \ell \le m}\|F(x_{\ell})-R(x_{\ell})\|_{\F}^2\le \eta \Longleftrightarrow\|F(x_{\ell})-R(x_{\ell})\|_{\F}^2\le \eta,  ~\forall 1\le \ell\le m.
$$
Providing $q(x_{\ell})\ne 0 ~(\forall 1\le \ell\le m)$, the latter $m$ constraints can further  be  represented as 
\begin{equation}\label{eq:mconsts}
\sum_{i=1}^s \sum_{j=1}^t \left|f_{ij}(x_\ell)q(x_{\ell})-p_{ij}(x_{\ell})\right|^2\le \eta |q(x_{\ell})|^2, \quad 1\le \ell\le m.
\end{equation}
Thus, similar to the scalar version in \cite{zhyy:2025}, we can construct the following minimization  
 \begin{align}\nonumber
\eta_2:=&\inf_{\eta\in \bbR,~p_{ij}\in \bbP_{n_{ij}},~0\not \equiv q\in \bbP_{d}}\eta \\\label{eq:linearity}
 s.t., ~&\sum_{i=1}^s \sum_{j=1}^t \left|f_{ij}(x_\ell)q(x_{\ell})-p_{ij}(x_{\ell})\right|^2\le \eta |q(x_{\ell})|^2, \quad 1\le \ell\le m.
\end{align}

In Theorem \ref{thm:attainable_equi}, we prove that the infimum $\eta_2$ of \eqref{eq:linearity} is indeed attainable, qualifying this linearization as a standard optimization. This is a parallel generalization of \cite[Theorem 2.1]{zhha:2025} whose proof can be suitably modified to lead the following assertion.
 \begin{theorem}\label{thm:attainable_equi}
Given $m\ge \max_{1\le i\le s,1\le j\le t }\{n_{ij}+d+2\}$ distinct nodes ${\cal X}=\{x_\ell\}_{\ell=1}^m$ in $ \bbC$, let $\eta_2$ be the infimum of \eqref{eq:linearity}. Then $\eta_2$  is attainable by certain polynomials $p_{ij}\in \bbP_{n_{ij}}~(1\le i\le s,1\le j\le t)$ and $0\not \equiv q\in \bbP_{d}$. 
\end{theorem}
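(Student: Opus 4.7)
The plan is a standard compactness-plus-normalization argument exploiting the scale invariance of \eqref{eq:linearity} in the pair $(p_{ij},q)$. First I would note that $\eta_2<\infty$ by picking, say, $q\equiv 1$ and any fixed $p_{ij}\in\bbP_{n_{ij}}$, and that $\eta_2\ge 0$ trivially. Next, since the constraint and the objective are homogeneous, if $(p_{ij},q)$ is feasible with value $\eta$ then so is $(cp_{ij},cq)$ for any nonzero $c\in\bbC$. This lets us normalize the coefficient vector $\bb\in\bbC^{d+1}$ of $q$ (in the monomial basis, say) by $\|\bb\|_2=1$, in particular keeping $q\not\equiv 0$ and making $|q(x_\ell)|$ uniformly bounded along any sequence.

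I would then pick a minimizing sequence $\{(\eta^{(k)},p_{ij}^{(k)},q^{(k)})\}$ with $\eta^{(k)}\to\eta_2$ and $\|\bb^{(k)}\|_2=1$. For $k$ large, $\eta^{(k)}\le\eta_2+1$, so the constraint in \eqref{eq:linearity} forces
$\sum_{i,j}|f_{ij}(x_\ell)q^{(k)}(x_\ell)-p_{ij}^{(k)}(x_\ell)|^2$ to be uniformly bounded in $k$ for every $\ell$. Combined with the boundedness of $f_{ij}(x_\ell)q^{(k)}(x_\ell)$, this yields uniform bounds on $p_{ij}^{(k)}(x_\ell)$ for every $\ell$. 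The hypothesis \eqref{eq:numberm} now plays its crucial role: since $m\ge n_{ij}+d+2>n_{ij}+1$ and the nodes $\{x_\ell\}$ are distinct, the Vandermonde matrix $\mathbf{\Psi}_{ij}$ in \eqref{eq:basisVmatrix} has full column rank, so the boundedness of $\mathbf{\Psi}_{ij}\ba_{ij}^{(k)}=[p_{ij}^{(k)}(x_1),\dots,p_{ij}^{(k)}(x_m)]^{\T}$ translates into boundedness of the coefficient vector $\ba_{ij}^{(k)}\in\bbC^{n_{ij}+1}$.

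A Bolzano-Weierstrass extraction then gives a subsequence along which $\bb^{(k)}\to\bb^*$ (with $\|\bb^*\|_2=1$, so $q^*\not\equiv 0$) and $\ba_{ij}^{(k)}\to\ba_{ij}^*$ for all $i,j$. Let $p_{ij}^*$ and $q^*$ be the associated polynomials. By continuity of both sides of the constraint as polynomial functions of the coefficients, passing to the limit yields
$\sum_{i,j}|f_{ij}(x_\ell)q^*(x_\ell)-p_{ij}^*(x_\ell)|^2\le\eta_2|q^*(x_\ell)|^2$ for every $\ell$, so the tuple $(\eta_2,p_{ij}^*,q^*)$ is feasible and attains the infimum.

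The only subtlety to watch is what happens at nodes where $q^*(x_\ell)=0$: the constraint then degenerates to $p_{ij}^*(x_\ell)=0$ for all $i,j$, but this does not derail the proof since the theorem only asserts attainability by some polynomials $(p_{ij}^*,q^*)$ with $q^*\not\equiv 0$, not that $R=P/q$ is defined pointwise at every node. Transferring the parallel scalar argument of \cite[Theorem 2.1]{zhha:2025} is then essentially mechanical: the scalar error $|f(x_\ell)q(x_\ell)-p(x_\ell)|^2$ is simply replaced by the finite sum over $(i,j)$, which does not affect any of the estimates above. I expect this bookkeeping, together with the careful normalization to preserve $q\not\equiv 0$ in the limit, to be the main (though not serious) obstacle.
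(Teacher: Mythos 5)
Your proof is correct and follows essentially the same route as the paper's: normalize the coefficient vector of $q$ to have unit norm, use the constraint to bound the values $p_{ij}^{(k)}(x_\ell)$ at the nodes, transfer this to a bound on the coefficient vectors via the invertibility (full column rank) of the Vandermonde matrix, and extract convergent subsequences. The only nuance is that ``$\eta_2\ge 0$ trivially'' in fact uses $m\ge d+2$ to guarantee a node with $q(x_\ell)\ne 0$, at which the constraint forces $\eta\ge 0$ --- exactly the observation the paper makes at the outset.
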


\begin{proof}
For any  $0\not \equiv q\in \bbP_{d}$, as $m\ge d+2$, there exists $x_j\in \cal{X}$ with $q(x_j)\ne 0$. This implies that ${\eta_2}\ge 0.$ Moreover, $\eta_2<\infty$  as we can choose $p_{ij}\equiv 0~(1\le i\le s, 1\le j\le t)$ and $q\equiv 1$ to bound the left hand side of each constraint of \eqref{eq:linearity} by a constant.

Let $\{(\eta^{(k)},\{p_{ij}^{(k)}\},q^{(k)})\}_k$ be a feasible  sequence such that $p_{ij}^{(k)}\in \bbP_{n_{ij}}$, $0\not \equiv q^{(k)}\in \bbP_{d}$ and $\eta^{(k)}\rightarrow \eta_2$ as $k\rightarrow \infty$.  For any $\varsigma^{(k)}\ne 0$, since $\{(\eta^{(k)},\varsigma^{(k)}\{p_{ij}^{(k)}\},\varsigma^{(k)}q^{(k)})\}$ is also feasible for \eqref{eq:linearity},  $\varsigma^{(k)}$ can be chosen so that the coefficient vector $\bb^{(k)}=[b_0^{(k)},\dots,b_{d}^{(k)}]^{\T}\in \bbC^{d+1}$ of $\varsigma^{(k)}q^{(k)}(x)=\sum_{i=0}^{d}b_i^{(k)}x^i$ satisfies  $\|\bb^{(k)}\|_2=1 ~(\forall k\ge 0)$. Let $\tilde p_{ij}^{(k)}:=\varsigma^{(k)} p_{ij}^{(k)}$ and $\tilde q^{(k)}:=\varsigma^{(k)}q^{(k)}$. Thus $\{\tilde q^{(k)}\}_k$ has a convergent subsequence resulting from a convergent subsequence of $\{\bb^{(k)}\}_k$. For simplicity of presentation, we assume that $\{\tilde q^{(k)}\}_k$ itself converges to $0\not \equiv  q\in \bbP_{d}$, and $\eta^{(k)}\le \rho_0 ~(\forall k\ge 0)$ for some $\rho_0>0$.
 
We next show that for each pair $(i,j)$, $\{p^{(k)}_{ij}\}_k$ also contains a convergent subsequence. Indeed, by $$|f_{ij}(x_\ell)\tilde q^{(k)}(x_\ell)-\tilde p_{ij}^{(k)} (x_\ell)|^2\le \sum_{i=1}^s \sum_{j=1}^t|f_{ij}(x_\ell)\tilde q^{(k)}(x_\ell)-\tilde p_{ij}^{(k)} (x_\ell)|^2\le  {\eta^{(k)}} |\tilde q^{(k)}(x_\ell)|^2,$$
we have 
$$
|\tilde p_{ij}^{(k)} (x_\ell)|\le \sqrt{\eta^{(k)}} |\tilde q^{(k)}(x_\ell)|+ |f_{ij}(x_\ell)\tilde q^{(k)}(x_\ell)|\le  (\sqrt{\rho_0}+ {f_{\max}})\rho_1 =:\tilde \rho_1,~\forall x_\ell\in {\cal X},
$$
where $\rho_1=\sum_{i=0}^{d}|x_J|^i$ with $|x_J|=\max_{1\le j\le m}|x_j|$. Denoting $\tilde p_{ij}^{(k)}(x)=\sum_{h=0}^{n_{ij}}a_{h}^{(k)}x^h$ in the monomial basis, we can choose the first $n_{ij}+1$ nodes $\{x_\ell\}_{\ell=1}^{n_{ij}+1}$ to have 
$$[\tilde p_{ij}^{(k)}(x_1),\dots,\tilde p_{ij}^{(k)}(x_{{n_{ij}}+1})]^{\T}=V(x_1,\dots,x_{n_{ij}+1})[a_0^{(k)},\dots,a_{n_{ij}}^{(k)}]^{\T},$$
where $V(x_1,\dots,x_{n_{ij}+1})$ is the Vandermonde matrix associated with $\{x_\ell\}_{\ell=1}^{n_{ij}+1}$. This gives 
$$\|[a_0^{(k)},\dots,a_{n_{ij}}^{(k)}]\|_2\le \tilde \rho_1 \sqrt{n_{ij}+1}\left\|[V(x_1,\dots,x_{n_{ij}+1})]^{-1}\right\|_2,~\forall k\ge 0.$$ Therefore,    $\{\tilde p_{ij}^{(k)}\}_k$ also has a convergent subsequence with a limit polynomial $p_{ij}\in \bbP_{n_{ij}}$ resulting from a convergent subsequence of $\{[a_0^{(k)},\dots,a_{n_{ij}}^{(k)}]\}_k$. Consequently, we can find $\{\eta_2,\{p_{ij}\},q\}$ with $0\not \equiv q\in \bbP_d$ which solves \eqref{eq:linearity}.
\end{proof}

Following \cite{zhha:2025} again, we next provide a sufficient condition: $0\le d\le \min_{1\le i\le s,1\le j\le t }\{n_{ij}\}$ to ensure $\eta_\infty=\eta_2$. Note that such a sufficient condition includes both the matrix-valued polynomial case (i.e., $d=0$) as well as the rational case with all  rational $r_{ij}$ of type $(d,d)$.  The result, together with Theorem \ref{thm:attainable_equi}, paves the way to solve \eqref{eq:bestf0} through the linearization \eqref{eq:linearity}.

\begin{theorem}\label{thm:linearity}
Given $m\ge \max_{1\le i\le s,1\le j\le t }\{n_{ij}+d+2\}$ distinct nodes ${\cal X}=\{x_j\}_{j=1}^m$ on $\Omega\subset\bbC$, let $ {\eta_2}$ be the  infimum of \eqref{eq:linearity}. If $0\le d\le \min_{1\le i\le s,1\le j\le t }\{n_{ij}\}$, then $\eta_2= \eta_\infty$; furthermore, in this case, whenever \eqref{eq:bestf0} has a solution $R\in  \scrR_{(s, t)}$ with $q(x_j)\ne 0 ~(\forall  1\le j\le m)$,  the tuple $(\eta_\infty, \{p_{ij}\},q)$ is a solution of  \eqref{eq:linearity}. 
\end{theorem}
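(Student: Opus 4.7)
The plan is to establish both $\eta_2 \le \eta_\infty$ and $\eta_\infty \le \eta_2$; the ``furthermore'' statement then drops out of the construction used in the first inequality, which directly exhibits $(\eta_\infty, \{p_{ij}\}, q)$ as a feasible point of \eqref{eq:linearity}.

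For $\eta_2 \le \eta_\infty$, I would take any $R = P/q \in \scrR_{(s,t)}$ with $e(R) < \infty$ and show $(e(R), \{p_{ij}\}, q)$ is feasible for \eqref{eq:linearity}. At each $x_\ell$ one of two cases occurs: either $q(x_\ell) \ne 0$, so the bound $\|F(x_\ell) - R(x_\ell)\|_{\F}^2 \le e(R)$ rewrites, after clearing denominators, as $\sum_{ij}|f_{ij}(x_\ell) q(x_\ell) - p_{ij}(x_\ell)|^2 \le e(R)|q(x_\ell)|^2$; or $q(x_\ell) = 0$, in which case finiteness of $R(x_\ell)$ forces $p_{ij}(x_\ell) = 0$ for all $(i,j)$ and the constraint becomes the trivial $0 \le 0$. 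Taking an infimum over $R$ yields $\eta_2 \le \eta_\infty$. In particular, if \eqref{eq:bestf0} is attained at $R = P/q$ with each $r_{ij} = p_{ij}/q$ irreducible (so $\gcd(p_{ij}, q) = 1$ for each $(i,j)$, which rules out the second case), this argument shows $(\eta_\infty, \{p_{ij}\}, q)$ is feasible for \eqref{eq:linearity}; once $\eta_2 = \eta_\infty$ is established this feasible point is optimal, giving the ``furthermore'' claim.

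The reverse inequality $\eta_\infty \le \eta_2$ is the substantive direction and is exactly where the hypothesis $d \le \min_{ij}\{n_{ij}\}$ is needed. Given any feasible tuple $(\eta, \{p_{ij}\}, q)$ of \eqref{eq:linearity}, set $Z := \{x_\ell \in \cX : q(x_\ell) = 0\}$; the constraints at each $\ell \in Z$ force $p_{ij}(x_\ell) = 0$ but are otherwise trivially satisfied, so the naive $R = P/q$ may develop unresolved poles on $Z$ and need not lie in the feasible set of \eqref{eq:bestf0}. I would sidestep this by a perturbation: since $|Z| \le d \le n_{ij}$, for each $(i,j)$ there exists a Lagrange interpolant $p_{*,ij} \in \bbP_{n_{ij}}$ satisfying $p_{*,ij}(x_\ell) = f_{ij}(x_\ell)$ for every $\ell \in Z$. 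Using these, define
\begin{equation*}
R_\delta(x) \;=\; \frac{P(x) + \delta P_*(x)}{q(x) + \delta},\qquad \delta > 0 \text{ small},
\end{equation*}
where $P_*$ denotes the matrix with entries $p_{*,ij}$. For all but finitely many small $\delta > 0$, $q + \delta \in \bbP_d \setminus \{0\}$ is nonzero at every node, so $R_\delta \in \scrR_{(s,t)}$. A direct evaluation gives $R_\delta(x_\ell) = F(x_\ell)$ with zero error at each $x_\ell \in Z$, while for each $x_k \notin Z$, continuity yields $R_\delta(x_k) \to P(x_k)/q(x_k)$ as $\delta \to 0^+$ and the bound $\|F(x_k) - P(x_k)/q(x_k)\|_{\F}^2 \le \eta$ passes to the limit. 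Hence $\limsup_{\delta \to 0^+} e(R_\delta) \le \eta$, so $\eta_\infty \le \eta$, and infimizing over feasible tuples gives $\eta_\infty \le \eta_2$.

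The main obstacle is the Lagrange interpolation that anchors the perturbation: without the degree hypothesis, $|Z|$ could in principle exceed $n_{ij}$ for some entry, and no $p_{*,ij} \in \bbP_{n_{ij}}$ matching $F$ on all of $Z$ would be available in general. The bound $|Z| \le d$ (since $q \in \bbP_d$ has at most $d$ zeros) combined with $d \le \min_{ij}\{n_{ij}\}$ guarantees $|Z| \le n_{ij}$ for every $(i,j)$, so an interpolant of degree at most $|Z| - 1$ fits safely inside $\bbP_{n_{ij}}$ and the whole construction goes through.
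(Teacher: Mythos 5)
Your proof is correct, and the heart of it --- the perturbed rational function $R_\delta=(P+\delta P_*)/(q+\delta)$ with $P_*$ interpolating $F$ on the zero set $Z$ of $q$, made possible by $|Z|\le d\le \min_{ij}n_{ij}$ --- is exactly the construction the paper uses. The difference is organizational but worth noting: the paper argues by contradiction, first invoking Theorem \ref{thm:attainable_equi} to obtain an actual minimizer $(\eta_2,\{\hat p_{ij}\},\hat q)$ of \eqref{eq:linearity}, assuming $\eta_2<\eta_\infty$, and using that strict gap to absorb the $O(\delta)$ perturbation error at the nodes where $\hat q\ne 0$. You instead apply the perturbation to an \emph{arbitrary} feasible tuple $(\eta,\{p_{ij}\},q)$ and conclude $\eta_\infty\le\limsup_{\delta\to 0^+}e(R_\delta)\le\eta$ directly, then infimize over feasible tuples. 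This buys you two small things: you never need the attainability of $\eta_2$ for the equality $\eta_2=\eta_\infty$ (so Theorem \ref{thm:attainable_equi} is not a prerequisite for this part), and you avoid the strict-inequality bookkeeping, since a non-strict $\limsup$ bound at each of the finitely many nodes suffices. The remaining pieces --- feasibility of the data of any $R$ with finite error (giving $\eta_2\le\eta_\infty$), and the observation that irreducibility plus finiteness of $e(R)$ forces $q(x_\ell)\ne 0$ at every node so that $(\eta_\infty,\{p_{ij}\},q)$ is feasible and hence optimal for \eqref{eq:linearity} --- match the paper's treatment.
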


\begin{proof} 
The proof is adopted by a slight modification of \cite[Theorem 2.4]{zhha:2025}. 
First, ${{\eta_2}}\le \eta_\infty$ holds due to   the fact that  for sequence $\{ \eta_{\infty}^{(k)}, R^{(k)}\}_k, ~\forall R^{(k)}\in  \scrR_{(s, t)}$ with $q^{(k)}(x)\ne 0 ~(\forall x\in {\cal X}$, $(\eta_{\infty}^{(k)},\{p_{ij}^{(k)}\},q^{(k)})$ is feasible for \eqref{eq:linearity}, and thus $ {\eta_2}\le \eta_{\infty}^{(k)}\rightarrow \eta_\infty$. 

We prove  ${\eta_2}= \eta_\infty$ by ruling out the possibility ${{\eta_2}}< \eta_\infty$. Assume ${\eta_2}< \eta_\infty$. By Theorem \ref{thm:attainable_equi},  we let $(\eta_2,\{\hat p_{ij}\},\hat q)$ be a solution to \eqref{eq:linearity} where $\hat p_{ij}\in\bbP_{n_{ij}}~(1\le i\le s, 1\le j\le t)$ and  $0\not \equiv  \hat q\in \bbP_d$.  Suppose w.l.g. that  $\hat q(x_\ell)=0$ for $\ell=1,\dots,h$. As $\hat q\not \equiv 0$, we have   $h\le d.$ For any pair $(i,j)$, we consider the constraint of \eqref{eq:linearity} on nodes $x_\ell, ~\forall  \ell=1,2,\dots, h$. Noting from
 \begin{equation*}
 \sum_{i=1}^{s}\sum_{j=1}^{t} |f_{ij}(\ell)\hat q(x_\ell)-\hat p_{ij}(x_\ell)|^2 =\sum_{i=1}^{s}\sum_{j=1}^{t} |\hat p_{ij}(x_\ell)|^2\le  \eta_2 \hat q(x_\ell)=0,
 \end{equation*}
 we have $\hat p_{ij}(x_\ell)=0, \forall \ell =1,\dots,h$. 
If $h=0$ (i.e., $\hat q(x_\ell)\ne 0~\forall \ell =1,2,\dots, m$), then the proof is finished as
$$
\sum_{i=1}^s\sum_{j=1}^t|f_{ij}(x_\ell) \hat   q(x_\ell)-\hat p_{ij}(x_\ell)|^2\le  {\eta_2} |\hat q(x_\ell)|^2\Longrightarrow \sum_{i=1}^s\sum_{j=1}^t\left|f_{ij}(x_\ell) -\frac{\hat p_{ij}(x_\ell)}{\hat q(x_\ell)}\right|^2\le  {{\eta_2}}<\eta_\infty,
$$
contradicting with the fact that $\eta_\infty$ is the infimum of \eqref{eq:bestf0}. 

Thus, we next consider the case $h\ge 1$. We will show that there is a parameterized form $\hat R(x;\delta)\in  \scrR_{(s,t)}$ with its $(i,j)$th component $\hat r_{ij}(x;\delta)$  given by
$$
\hat r_{ij}(x;\delta)=\frac{\hat p_{ij}(x;\delta)}{\hat q(x;\delta)}:=\frac{\hat p_{ij}(x)+\delta \cdot p_{ij}(x)}{\hat q(x)+\delta \cdot q(x)} 
$$
for some polynomials $p_{ij}\in \bbP_{n_{ij}} ~(1\le i\le s, 1\le j\le t)$ and $q\in \bbP_{d}$ so that 
\begin{equation}\label{eq:contraction}
\max_{1\le  \ell\le m}\sum_{i=1}^s\sum_{j=1}^t\left|f_{ij}(x_\ell)-\frac{\hat p_{ij}(x_\ell;\delta)}{\hat q(x_j;\delta)}\right|^2<\eta_\infty.
\end{equation}
This implies that $\eta_\infty$ is not the infimum of \eqref{eq:bestf0}, a contradiction. 

To this end, choose $q\equiv 1$.  Since $h\le d\le \min_{1\le i\le s,1\le j\le t }\{n_{ij}\}$, there are  interpolation polynomials $p_{ij}\in \bbP_{n_{ij}}$   fulfilling  conditions $p_{ij}(x_\ell)=f_{ij}(x_\ell) ~(1\le \ell\le h)$; hence $\left|f_{ij}(x_\ell)-\frac{p_{ij}(x_\ell)}{q (x_\ell)}\right|=0,~1\le \ell \le h$, leading to 
$$\sum_{i=1}^s \sum_{j=1}^t\left|f_{ij}(x_\ell)-\frac{p_{ij}(x_\ell)}{q (x_\ell)}\right|^2=0$$ and $q (x_\ell)\ne 0$,  $\forall \ell=1,\dots, h$. 

On the other hand, for any sufficiently small $\delta\ne 0$, we know that $\hat q(x_\ell;\delta)=\hat q(x_\ell) +\delta q(x_\ell)=\hat q(x_\ell) +\delta  \ne 0,~1\le \ell\le m$. Moreover, the parameterized $\frac{\hat p_{ij}(x;\delta)}{\hat q(x;\delta)}$   satisfies
\begin{align*}
\sum_{i=1}^s\sum_{j=1}^t\left|\frac{\hat p_{ij}(x_\ell;\delta)}{\hat q(x_\ell;\delta)}-f_{ij}(x_\ell)\right|^2&=\sum_{i=1}^s\sum_{j=1}^t\left|\frac{\hat p_{ij}(x_\ell)+\delta \cdot p_{ij}(x_\ell)}{\hat q(x_\ell)+\delta \cdot q(x_\ell)}-f_{ij}(x_\ell)\right|^2\\
&=\sum_{i=1}^s\sum_{j=1}^t\left|\frac{p_{ij}(x_\ell)}{q(x_\ell)}-f_{ij}(x_\ell)\right|^2=0< \eta_\infty,~\forall  \ell=1,\dots,h.
\end{align*}
Furthermore,  $\forall \ell=h+1,\dots,m$, we have 
$$
\sum_{i=1}^s\sum_{j=1}^t |f_{ij}(x_\ell) \hat q(x_\ell)-\hat p_{ij}(x_\ell)|^2\le  {\eta_2} |\hat q(x_\ell)|^2\Longrightarrow \sum_{i=1}^s\sum_{j=1}^t\left|f_{ij}(x_\ell) -\frac{\hat p_{ij}(x_\ell)}{\hat q(x_\ell)}\right|^2\le  {\eta_2}<\eta_\infty,
$$
and for each pair $(i,j)$,
$$
\left|\frac{\hat p_{ij}(x_\ell)+\delta \cdot p_{ij}(x_\ell)}{\hat q(x_\ell)+\delta \cdot q(x_\ell)}-\frac{\hat p_{ij}(x_\ell)}{\hat q(x_\ell)}\right|=|\delta|\cdot \left|\frac{ p_{ij}(x_\ell)\hat q(x_\ell)- \hat p_{ij}(x_\ell)}{(\hat q(x_\ell)+\delta )\hat q(x_\ell)}\right|\rightarrow 0,~~{\rm as}~~\delta\rightarrow 0.
$$
Consequently,  for any sufficiently small $\delta$,  it follows 
\begin{align*}
\sum_{i=1}^s\sum_{j=1}^t\left|\frac{\hat p_{ij}(x_\ell;\delta)}{\hat q(x_\ell;\delta)}-f_{ij}(x_\ell)\right|^2&\le\sum_{i=1}^s\sum_{j=1}^t\left( \left|\frac{\hat p_{ij}(x_\ell;\delta)}{\hat q(x_\ell;\delta)}-\frac{\hat p_{ij}(x_\ell)}{\hat q(x_\ell)}\right|+\left|f_{ij}(x_\ell) -\frac{\hat p_{ij}(x_\ell)}{\hat q(x_\ell)}\right|\right)^2\\
&<\eta_\infty, ~\forall \ell=h+1,\dots, m.
\end{align*} 
This leads to \eqref{eq:contraction}, and we have $ {\eta_2}= \eta_\infty$ by contradiction.  

For the last part, according to $ {\eta_2}= \eta_\infty$ and Theorem \ref{thm:attainable_equi},    
whenever \eqref{eq:bestf0} has a solution $R\in  \scrR_{(s, t)}$ with $q(x_j)\ne 0 ~(1\le j \le m)$,  the tuple $(\eta_\infty, \{p_{ij}\},q)$ is a solution of  \eqref{eq:linearity}. 
\end{proof} 
 
Note that while \eqref{eq:bestf0} and its linearized counterpart \eqref{eq:linearity} share the same infimum $\eta_{\infty}$, this value may not be attainable in \eqref{eq:bestf0}---a fundamental feature of rational minimax approximation. The following example extends \cite[Example 2.1]{zhha:2025} to the matrix-valued setting, revealing the intrinsic distinction between \eqref{eq:bestf0} and \eqref{eq:linearity}. 
\begin{example}
  Consider \(s=t=2\), \(n_{ij}=d=1,\ \forall \ 1 \le i,j\le 2\) and $m=4$ with \(x_{\ell}=\frac{\ell-1}{4}\) (\(1\le \ell\le 4\)). Let  $F(x_1)=I_2$ and $F(x_\ell)=0_{2 \times 2}$ for $2\le \ell\le 4$.
  It is true that for \eqref{eq:linearity}, the infimum \(\eta_2=0\) is attainable by \(p_{ij}(x) \equiv 0,\ \forall i,j\) and \(q(x)=x\).  For \eqref{eq:bestf0},  consider the sequence \(R^{(k)}(x)=\left[r^{(k)}_{ij}(x)\right] \in \mathscr{R}_{(2,2)}\) with $r^{(k)}_{ij}(x)\equiv 0$  for $1\le i\ne j\le 2$, while  $r^{(k)}_{ij}(x)=\frac{1}{1+kx}$ for $1\le i=j\le 2$.
  It holds that
  \begin{equation}\nonumber
    \max_{1 \le \ell \le 4}\|F(x_{\ell})-R^{(k)}(x_{\ell})\|_{\F}^2=\frac{2}{\left(1+\frac{k}{4}\right)^2} \rightarrow 0,\ \ {\rm as}\ k \rightarrow \infty,
  \end{equation}
  which implies \(\eta_2=\eta_{\infty}\).  
  However, the infimum \(\eta_{\infty}=0\) is unattainable in \eqref{eq:bestf0} as any nonconstant  type (1,1) rational function can only take each value once. 
\end{example}

Theorem \ref{thm:attainable_equi} reveals that \eqref{eq:linearity} is a specific minimization problem, for which we can apply the traditional Lagrange duality theory (see e.g., \cite{boyd:2004,nowr:2006}). Write $p_{ij}(x_{\ell})$ (resp.,  $q(x_{\ell})$), i.e., the evaluation of  $p_{ij}(x)$ (resp.,  $q(x)$) at $x_{\ell}$,  as $p_{ij}(x_{\ell};\ba_{ij})$ (reps., $q(x_{\ell};\bb)$), in order to indicate the dependence on the coefficient vector $\ba_{ij}=\ba_{ij}^{\tt r}+ {\tt i}\ba_{ij}^{\tt i}$  (resp., $\bb=\bb^{\tt r}+ {\tt i}\bb^{\tt i}$). Now, introduce the Lagrange multipliers $0\le \bw=[w_1,\dots,w_m]^{\T}\in \bbR^m$ and define the Lagrange function of \eqref{eq:linearity} as
\begin{align}\nonumber
\lambda(\eta,\{\ba_{ij}\},\bb;\bw)&=\eta- \sum_{\ell=1}^mw_\ell\left(\eta |q(x_\ell;\bb)|^2-\sum_{i,j}\Big|f_{ij}(x_{\ell})q(x_\ell;\bb)-p_{ij}(x_\ell;\ba_{ij})\Big|^2 \right)\\\nonumber
&=\eta\left(1-\sum_{\ell=1}^mw_\ell|q(x_\ell;\bb)|^2\right)+\sum_{\ell=1}^mw_\ell \left(\sum_{i,j}\Big|f_{ij}(x_{\ell})q(x_\ell;\bb)-p_{ij}(x_\ell;\ba_{ij})\Big|^2\right).
\end{align}
It is easy to see that for a given $\bw\ge 0$, 
\begin{align}\nonumber
&\inf_{\eta,\{\ba_{ij}\},\bb}\lambda(\eta,\{\ba_{ij}\},\bb;\bw)\\
=&\left\{\begin{array}{lc}-\infty,& \mbox{if~} 1\ne \sum_{\ell=1}^mw_\ell|q(x_\ell;\bb)|^2, \\\inf_{\{\ba_{ij}\},\bb}\sum_{\ell=1}^mw_\ell \left(\sum_{i,j}\big|f_{ij}(x_\ell)q(x_\ell;\bb)-p_{ij}(x_\ell;\ba_{ij})\big|^2\right), & \mbox{if~} 1=\sum_{\ell=1}^mw_\ell|q(x_\ell;\bb)|^2.\end{array}\right.
\end{align}
Therefore, the associated Lagrange dual function at $\bw\ge 0$  is given by 
\begin{equation}\label{eq:Lagrangedualfun2}
\inf_{\begin{subarray}{c} \ba_{ij}\in \bbC^{n_{ij}+1},~\bb\in \bbC^{d+1}\\
            \sum_{\ell=1}^m w_\ell|q(x_\ell;\bb)|^2=1\end{subarray}}\sum_{\ell=1}^m w_\ell \left(\sum_{i=1}^s\sum_{j=1}^t\Big|f_{ij}(x_\ell) q(x_\ell;\bb)-p_{ij}(x_\ell;\ba_{ij})\Big|^2\right).
\end{equation}

To simplify our presentation, associated with \eqref{eq:Lagrangedualfun2}, let 
\begin{equation}\label{eq:notation1}
n = \sum_{i=1}^s\sum_{j=1}^t(n_{ij}+1),~g=st, ~F_{ij}=\diag(f_{ij}(x_1),\dots,f_{ij}(x_m))\in \bbC^{m\times m}, 
\end{equation} 
\begin{align}\label{eq:Ft}
\BF=\left[\begin{array}{c}F_{11} \\\vdots \\F_{s1}\\F_{12} \\\vdots\\F_{st}\end{array}\right]\in \bbC^{gm\times m},~~ {\bf \Theta}=\left[\begin{array}{cccccc}{\bf \Psi}_{11} &&&&&\\&\ddots&&&& \\ &&{\bf \Psi}_{s1}&&&\\ &&&{\bf \Psi}_{21}&& \\&&&&\ddots&\\&&&&&{\bf \Psi}_{st} \end{array}\right]\in \bbC^{gm\times n},
\end{align}
where the row blocks of $\BF$ and the diagonal blocks  of ${\bf \Theta}$ are ordered according to the rule of vectorizing the matrix column-wise (i.e., stacking the $(i+1)$th column below the $i$th).  Similarly, we stack vectors $\ba_{ij}\in\bbC^{n_{ij}+1}$ and $\bp_{ij}\in \bbC^m$ associated with the numerators $p_{ij}$ to have 
\begin{equation}\label{eq:a}
\ba=[\ba_{11}^{\T},\dots,\ba_{s1}^{\T},\ba_{12}^{\T},\dots, \ba_{st}^{\T}]^{\T}\in \bbC^{n},~\bp={\bf \Theta}\ba =[\bp_{11}^{\T},\dots,\bp_{s1}^{\T},\bp_{12}^{\T},\dots, \bp_{st}^{\T}]^{\T}\in \bbC^{mg}
\end{equation}
and define 
\begin{equation}\label{eq:blkW}
{\BW} = I_g\otimes W\in \bbC^{gm\times gm},  ~W=\diag(\bw)\in \bbC^{m\times m},
\end{equation} 
where $\otimes$ stands for the Kronecker product. Consequently, we   rewrite \eqref{eq:Lagrangedualfun2} compactly as 
\begin{equation}\label{eq:Lagrangedualfun3}
\inf_{\begin{subarray}{c} \ba\in \bbC^{n},~\bb\in \bbC^{d+1}\\
             \|\sqrt{W}{\bf \Phi} \bb\|_2=1\end{subarray}}\left\|\sqrt{{\BW}}({\BF} {\bf \Phi} \bb-{\bf \Theta}\ba)\right\|_2^2. 
\end{equation}

An immediate conclusion from the Lagrange duality is  the so-called {\it weak duality}, which asserts that  the dual objective function at any $\bw\ge 0$ is a lower bound for $\eta_\infty$. The following theorem as well as its proof can be viewed as the extension of \cite[Theorem 2.2]{zhyy:2025} to the matrix-valued rational minimax approximation \eqref{eq:bestf0}. 
\begin{theorem}\label{thm:q-dual}
Let $\eta_{\infty}$ be the supremum of \eqref{eq:bestf0}. Then  we have weak duality:
\begin{equation}\label{eq:weak-dual}
    \sup_{\bw\ge \mathbf{0}}d(\bw) =\max_{\bw\in {\cal S}}d(\bw)\le \eta_\infty, 
\end{equation}
where, using the notation in \eqref{eq:notation1}, \eqref{eq:Ft}, \eqref{eq:a} and \eqref{eq:blkW}, the dual function $d(\bw)$ is   
\begin{align}\nonumber
d(\bw):&=\min_{\begin{subarray}{c} \ba_{ij}\in \bbC^{n_{ij}+1},~\bb\in \bbC^{d+1}\\
            \sum_{\ell=1}^m w_\ell|q(x_\ell;\bb)|^2=1\end{subarray}}\sum_{\ell=1}^m w_\ell \left(\sum_{i=1}^s\sum_{j=1}^t\Big|f_{ij}(x_\ell) q(x_\ell;\bb)-p_{ij}(x_\ell;\ba_{ij})\Big|^2\right)\\\label{eq:rat-d-compt}
            &=\min_{\begin{subarray}{c} \ba\in \bbC^{n},~\bb\in \bbC^{d+1}\\
             \|\sqrt{W}{\bf \Phi} \bb\|_2=1\end{subarray}}\left\|\sqrt{{\BW}}({\BF}{\bf\Phi} \bb-{\bf \Theta}\ba)\right\|_2^2,
\end{align}
and ${\cal S}$ is the probability simplex  defined by
\begin{equation}\label{eq:simplex}
{\cal S}:=\{\bw=[w_1,\dots,w_m]^{\T}\in \bbR^m: \bw\ge 0 ~{\rm and } ~\bw^{\T}\be=1\},~~\be=[1,1,\dots,1]^{\T}.
\end{equation}  
\end{theorem}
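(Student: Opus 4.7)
The plan is to split the claim into two pieces: first, establish the reduction $\sup_{\bw\ge\bzs} d(\bw) = \max_{\bw\in{\cal S}} d(\bw)$; second, prove the weak duality bound $d(\bw)\le \eta_\infty$ for every $\bw\in{\cal S}$. The reduction rests on a scale-invariance observation for $d$. In the definition of $d(c\bw)$ with $c>0$, substituting $(\ba,\bb)\mapsto(\sqrt{c}\,\ba,\sqrt{c}\,\bb)$ turns the constraint $\sum_{\ell=1}^m (cw_\ell)|q(x_\ell;\bb)|^2=1$ into $\sum_{\ell=1}^m w_\ell|q(x_\ell;\bb)|^2=1$ and leaves the objective unchanged, giving $d(c\bw)=d(\bw)$. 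Renormalizing any $\bw\ge\bzs$ with $\bw\ne\bzs$ by $1/(\be^{\T}\bw)$ then shows that the supremum over the nonnegative orthant coincides with the supremum over the simplex ${\cal S}$.

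For attainment on ${\cal S}$, I would pass to the equivalent homogeneous rewriting
\begin{equation*}
d(\bw) \;=\; \inf_{\bb\ne\bzs,~\ba}\; \frac{\sum_{\ell=1}^m w_\ell \sum_{i=1}^s\sum_{j=1}^t \bigl|f_{ij}(x_\ell)q(x_\ell;\bb)-p_{ij}(x_\ell;\ba_{ij})\bigr|^2}{\sum_{\ell=1}^m w_\ell |q(x_\ell;\bb)|^2},
\end{equation*}
in which, for each fixed $(\ba,\bb)$ with $\bb\ne\bzs$, the ratio is a continuous (or identically $+\infty$) function of $\bw\in{\cal S}$; hence $d$ is upper semicontinuous on the compact set ${\cal S}$ and the supremum is attained. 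For the weak-duality inequality itself, fix $\bw\in{\cal S}$ and pick a minimizing sequence $\{R^{(k)}\}_k\subset\scrR_{(s,t)}$ for \eqref{eq:bestf0} whose irreducible representations $r_{ij}^{(k)}=p_{ij}^{(k)}/q^{(k)}$ satisfy $e(R^{(k)})\to\eta_\infty$. Boundedness of $e(R^{(k)})$ forces $\|R^{(k)}(x_\ell)\|_{\F}$ to remain bounded, which in turn forces $q^{(k)}(x_\ell)\ne 0$ at every node (as noted after \eqref{eq:exi}); since $\bw\ne\bzs$, $\sum_\ell w_\ell|q^{(k)}(x_\ell)|^2>0$. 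Rescaling the pair $(p_{ij}^{(k)},q^{(k)})$ by a common nonzero constant (which does not alter $R^{(k)}$) to enforce $\sum_\ell w_\ell|q^{(k)}(x_\ell)|^2=1$ produces a feasible point for the minimization defining $d(\bw)$, and
\begin{equation*}
d(\bw)\;\le\;\sum_{\ell=1}^m w_\ell|q^{(k)}(x_\ell)|^2\,\|F(x_\ell)-R^{(k)}(x_\ell)\|_{\F}^2 \;\le\; e(R^{(k)}),
\end{equation*}
so letting $k\to\infty$ yields $d(\bw)\le\eta_\infty$.

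The main obstacle is conceptual rather than technical: it is the scale ambiguity of \eqref{eq:linearity}. Because the Lagrangian carries the term $\eta(1-\sum_\ell w_\ell|q(x_\ell;\bb)|^2)$, its unconstrained infimum over $(\eta,\ba,\bb)$ is $-\infty$ for essentially every $\bw$, and one is forced to bake the normalization $\sum_\ell w_\ell|q(x_\ell;\bb)|^2=1$ into the definition of the dual function itself. This introduces a $\bw$-dependent feasible set that could have obstructed both the semicontinuity argument and the weak-duality estimate. The homogeneous-ratio rewriting above and the explicit rescaling of the near-optimizers $R^{(k)}$ are the two devices that dispose of this difficulty; once one records that $q^{(k)}(x_\ell)\ne 0$ at every node (so that rescaling is legitimate), the remainder is elementary bookkeeping.
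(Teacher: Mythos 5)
Your argument is correct and follows essentially the same route as the paper: the identity $\sup_{\bw\ge\mathbf{0}}d(\bw)=\sup_{\bw\in{\cal S}}d(\bw)$ via the rescaling $(\ba,\bb)\mapsto(\sqrt{c}\,\ba,\sqrt{c}\,\bb)$, and the bound $d(\bw)\le\eta_\infty$ by normalizing near-optimal $(p_{ij},q)$ (with $q(x_\ell)\ne 0$ at every node) to satisfy the constraint $\sum_\ell w_\ell|q(x_\ell)|^2=1$. Your additional upper-semicontinuity/compactness argument for attainment of the maximum over ${\cal S}$ goes beyond what the paper records (which instead invokes attainment of the \emph{inner} minimization via the positive semi-definite pencil result of Liang, Li and Bai) and is sound once the ratio is extended by $+\infty$ wherever its denominator vanishes, so that $d$ is an infimum of upper semicontinuous functions of $\bw$.
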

\begin{proof}
We  first remark that, for any non-negative $\bw\ne \mathbf{0}$, the minimization of \eqref{eq:rat-d-compt} is reachable at a pair $(\ba, \bb)$  as  it is  a trace minimization for a Hermitian positive semi-definite pencil, and  the solution can be obtained by  \cite[Theorems 2.1 and 4.1]{lilb:2013} (more detailed information on computation of $( \ba, \bb)$ will be given in Section \ref{sec_d2}).   Let $(\{\wtd p_{ij}\},\wtd q)$ for  $1\le i\le s,~1\le j\le t$ be the corresponding   polynomials  at the minimizer of \eqref{eq:rat-d-compt}.

We next show that  
$d(\bw)\le \eta_\infty, \forall  \bw\ge \mathbf{0}$, which then  guarantees   
 $\sup_{\bw\ge \mathbf{0}}d(\bw) \le \eta_\infty$. 
 Because $\eta_{\infty}$ is the infimum in \eqref{eq:bestf0}, for any $\epsilon>0$, there is an $R(x)\in  \scrR_{(s,t)}$ with  $q(x_j)\ne 0 ~(\forall x_\ell\in {\cal X})$ so that 
 $$
 \max_{1\le \ell \le m}\|F(x_{\ell})-R(x_{\ell})\|_{\F}^2\le \eta_{\infty}+\epsilon
 $$ 
 which gives 
 \begin{equation}\label{eq:etaeps}
\sum_{i=1}^s \sum_{j=1}^t \left|f_{ij}(x_\ell) -r_{ij}(x_{\ell})\right|^2\le  \eta_\infty+\epsilon, \quad 1\le \ell\le m.
 \end{equation}
As $\bw\ne 0$ and $q(x_\ell)\ne 0 ~(\forall x_\ell\in {\cal X})$, we can choose a scalar $\tau\ne 0$ so that $\what  q = \tau q, ~\what p_{ij}  = \tau p_{ij}$ for $1\le i\le s,~1\le j\le t$ satisfying $\sum_{\ell=1}^m  w_\ell|\what q(x_\ell)|^2 =1$.   Thus, as $(\{\wtd p_{ij}\},\wtd q)$ for  $1\le i\le s,~1\le j\le t$ are the corresponding   polynomials  for \eqref{eq:rat-d-compt} at $\bw$, it holds that
\begin{align*}\nonumber
d(\bw)&=\sum_{\ell=1}^m w_\ell \left( \sum_{i=1}^s\sum_{j=1}^t|f_{ij}(x_\ell) \wtd q(x_\ell)-\wtd   p_{ij}(x_\ell)|^2\right)  \\&\le\sum_{\ell=1}^m w_\ell \left(\sum_{i=1}^s\sum_{j=1}^t|f_{ij}(x_\ell)  \what q (x_\ell)- \what p_{ij} (x_\ell)|^2\right) \quad {\mbox{ (because $(\{\wtd p_{ij}\},\wtd q)$ are optimal)}}\\\label{eq:weakduality}
&= \sum_{\ell=1}^m w_\ell |\what  q(x_\ell)|^2\cdot \left(\sum_{i=1}^s\sum_{j=1}^t\left|f_{ij}(x_{\ell})-\frac{ \what p_{ij}(x_\ell)}{\what q(x_\ell)}\right|^2\right) \\
&=\sum_{\ell=1}^m w_\ell |\what  q(x_\ell)|^2\cdot \left( \sum_{i=1}^s\sum_{j=1}^t\left|f_{ij}(x_{\ell})-r_{ij}(x_\ell)\right|^2\right)\\
& \le  \sum_{\ell=1}^m w_\ell  |\what  q(x_\ell)|^2(\eta_\infty+\epsilon) = \eta_\infty+\epsilon, 
\end{align*}
where  we have used  \eqref{eq:etaeps} and $\sum_{\ell=1}^m w_\ell |\what  q(x_\ell)|^2=1$. Letting $\epsilon\rightarrow 0$ gives the desired $d(\bw)\le \eta_\infty$.

Finally, to prove $\sup_{\bw\ge \mathbf{0}}d(\bw) =\max_{\bw\in {\cal S}}d(\bw),$ we only need to show 
$\sup_{\bw\ge \mathbf{0}}d(\bw) \le \max_{\bw\in {\cal S}}d(\bw)$. For any $\bw\ge \mathbf{0}$, let $\wtd \bw= {\bw}{\varsigma}\in {\cal S}$ with $\varsigma=\frac{1}{\bw^{\T}\be}$ and set correspondingly $\wtd {W}=\diag(\wtd \bw)$ and $\wtd { \BW}:=I_g \otimes\wtd W$. Note 
\begin{align*}
d(\wtd \bw)&=\min_{\begin{subarray}{c}  \ba\in \bbC^{n},~ \bb\in \bbC^{d+1}\\
            \|\sqrt{\wtd {   W}} {\bf \Phi} \bb\|_2 =1\end{subarray}}\left\|\sqrt{\wtd {\BW}}[-{\bf \Theta},\BF {\bf \Phi}]  \left[\begin{array}{c}\ba \\\bb\end{array}\right]\right\|_2^2\\
            &= \min_{\begin{subarray}{c}  \sqrt{\varsigma}\ba\in \bbC^{n},~ \sqrt{\varsigma} \bb\in \bbC^{d+1}\\
            \|\sqrt{W}{\bf \Phi}(\sqrt{\varsigma}\bb)\|_2 =1\end{subarray}}\left\|\sqrt{ \BW}[-{\bf \Theta},\BF {\bf \Phi}]  \left[\begin{array}{c}\sqrt{\varsigma}\ba \\\sqrt{\varsigma} \bb\end{array}\right]\right\|_2^2=d( \bw),
\end{align*}
and thus the proof is complete.
\end{proof}

weak duality suggests a new problem, i.e., the Lagrange dual problem of \eqref{eq:linearity},
\begin{equation}\label{eq:rat-dual}
    \max_{\bw\in {\cal S}}d(\bw) 
\end{equation}
to solve the original matrix-valued rational minimax approximation \eqref{eq:bestf0}. In this framework, the success in computing the minimax solution $R^*\in  \scrR_{(s,t)}$ of \eqref{eq:bestf0},  if it exists, is crucially dependent on the following condition
\begin{equation}\label{eq:strongdual}
d^*:=\max_{\bw\in {\cal S}} d(\bw)=\eta_\infty.
\end{equation} 
The condition \eqref{eq:strongdual} implies that there is no gap between the maximum of the dual problem with $\eta_\infty$, and is known as  {\it strong duality} (see e.g., \cite[Chapter 5.2]{boyd:2004}) in optimization. 

\begin{itemize}\label{rk:dual}

\item[(1)] In the polynomial minimax approximation for a complex-valued function $f$ (i.e.,  $s=t=1,~d=0$), strong duality   is guaranteed (see \cite[Theorem 2.1]{yazz:2023}).  In this case, as $q\in \bbP_0$ and  $\sum_{\ell=1}^m w_\ell |q(x_\ell)|^2=1$,   $|q|\equiv1$; thus  $d(\bw)$ in \eqref{eq:rat-d-compt} becomes
\begin{equation}\nonumber
d(\bw)=\min_{\begin{subarray}{c}  \ba\in \bbC^{n} \end{subarray}}\left\|\sqrt{ \bf W}(q\BF \be-{\bf\Theta} \ba ) \right\|_2^2 \xlongequal{\text{$\wtd \ba = \ba/q$}} \min_{\begin{subarray}{c}  \wtd\ba\in \bbC^{n} \end{subarray}}\left\|\sqrt{W}(\Bf -{\bf\Theta} \wtd \ba )  \right\|_2^2,
\end{equation}   
where ${\Bf}=[f(x_1),\dots, f(x_\ell)]^{\T}$ and ${\bf\Theta}$ is the Vandermonde matrix associated with the nodes $\{x_\ell\}_{\ell=1}^m$. The traditional Lawson's iteration \cite{elwi:1976,laws:1961}, which consists of solving a sequence of weighted least-squares (LS) problems, is shown to be an effective method for solving the dual problem \eqref{eq:rat-dual} (see \cite{yazz:2023}); it  is an   iteratively reweighted least-squares (IRLS) iteration \cite{fint:2018,natr:2020}, and the involved weights   are just the dual variables $\bw$  which are updated according to the errors of the approximations from these weighted LS problems. Interestingly, as a generalization of \cite[Theorem 2.1]{yazz:2023}, for   matrix-valued polynomial minimax approximations with $s\ge 1,t\ge 1$ and $d=0$, Theorem \ref{thm:strongdualPoly} ensures that strong duality \eqref{eq:strongdual} is true.

\item[(2)] For the rational minimax approximation with $s=t=1$ and $d\ge 1$,  \cite[Section 4]{zhyy:2025}  connects strong duality \eqref{eq:strongdual} with Ruttan's sufficient condition \cite[Theorem 2.1]{rutt:1985} (see also \cite[Theorem 3]{this:1993}). In this case, though strong duality cannot be guaranteed for any situation of \eqref{eq:bestf0},  it is true frequently in practice \cite{zhyy:2025}. Theorem \ref{thm:strongdualityRuttan} establishes the counterpart of Ruttan's sufficient condition for   matrix-valued rational minimax approximations.
\end{itemize}


\section{Computation details for the dual function}\label{sec_d2} 
\subsection{Basic properties of the dual function $d(\bw)$}
It is noticed that computing the dual function value $d(\bw)$ at $\bw\in {\cal S}$ needs to solve a constrained minimization \eqref{eq:rat-d-compt}. Fortunately, the dual formulation of the matrix-valued rational minimax approximation \eqref{eq:rat-d-compt} takes a similar form as the one for scalar version (i.e.,  $s=t=1$) given in \cite{zhyy:2025}. Hence, the treatment in \cite[Proposition 3.1]{zhyy:2025} can be employed to transform it into a generalized eigenvalue problem, which is further related to finding the smallest singular pair of a tall matrix. 
\begin{proposition}\label{prop:dual_GEP}
For   $ \bw\in {\cal S}$, with notation in \eqref{eq:notation1}, \eqref{eq:Ft},  \eqref{eq:a} and \eqref{eq:blkW}, we have  
\begin{itemize}
\item[(i)]
$\bc(\bw)=\left[\begin{array}{c} \ba(\bw) \\ \bb(\bw)\end{array}\right] \in \bbC^{n+d+1}$ is a solution of \eqref{eq:rat-d-compt} if and only if it  {is} an eigenvector of the Hermitian positive semi-definite generalized eigenvalue problem $(A_{\bw},B_{\bw})$ and  $d(\bw)$ is the smallest eigenvalue  satisfying
\begin{equation}\label{eq:dual_GEP}
A_{\bw} \bc(\bw)=d(\bw) B_{\bw} \bc(\bw)~ \mbox{ and }~ (\bc(\bw))^{\HH}B_{\bw}\bc(\bw) =1,
\end{equation} 
where
\begin{align}\label{eq:dual_GEPA}
A_{\bw}:&=[-{\bf \Theta}, \BF{\bf \Phi}]^{\HH}\BW[-{\bf \Theta}, \BF{\bf \Phi}]=\left[\begin{array}{cc}{\bf \Theta}^{\HH}\BW{\bf \Theta} & -{\bf \Theta}^{\HH}\BW\BF{\bf \Phi} \\-{\bf \Phi}^{\HH} \BF^{\HH}\BW  {\bf \Theta} & {\bf\Phi}^{\HH}\BF^{\HH}\BW\BF{\bf\Phi}\end{array}\right],\\\label{eq:dual_GEPB}
B_{\bw}:&= [0_{m\times n},{\bf \Phi}]^{\HH} W[0_{m\times n},{\bf\Phi}]=\left[\begin{array}{cc}0_{n\times n} & 0_{n\times (d+1)} \\0_{(d+1)\times n} & {\bf \Phi}^{\HH}W{\bf \Phi} \end{array}\right]\in \bbC^{(n+d+1)\times (n+d+1)};
\end{align}
\item[(ii)] the  Hermitian matrix $H_{\bw}:=A_{\bw} -d(\bw) B_{\bw} \succeq 0$, i.e., $H_{\bw}$ is  positive semi-definite;

\item[(iii)] let $\sqrt{W}{\bf \Phi}=Q_qR_q\in \bbC^{m\times (d+1)}$ and $\sqrt{\BW}{\bf \Theta}=Q_pR_p\in \bbC^{gm\times n}$ be the thin QR factorizations where $Q_q\in \bbC^{m\times \wtd n_2}$, $Q_p\in \bbC^{gm\times \wtd n_1}$, $R_q\in \bbC^{ \wtd n_2 \times (d+1)}$,  $R_p\in \bbC^{\wtd n_1 \times n}$ with $\wtd n_1=\rank(\sqrt{\BW}{\bf \Theta})$ and $\wtd n_2=\rank(\sqrt{\BW}{\bf \Phi})$. Then $d(\bw)$ is the smallest   eigenvalue of the following Hermitian positive semi-definite  matrix 
\begin{equation}\label{eq:Heig}
S(\bw):=S_{\F}-S_{qp}S_{qp}^{\HH}\in \bbC^{\wtd n_2\times \wtd n_2},
\end{equation} 
where 
\begin{equation}\nonumber
S_{\F}=Q_q^{\HH} \left(\sum_{j=1}^t\sum_{i=1}^s|F_{ij}|^2\right)Q_q\in \bbC^{\wtd n_2\times \wtd n_2},~~S_{qp}=Q_q^{\HH}\BF^{\HH} Q_p\in \bbC^{\wtd n_2 \times \wtd n_1}, 
\end{equation}
with $|F_{ij}|=\diag(|f_{ij}(x_1)|,\dots,|f_{ij}(x_m)|)\in \bbC^{m\times m}$. 
Moreover, $\sqrt{d(\bw)}$ is   the smallest singular value of   $(I-Q_pQ_p^{\HH})\BF Q_q\in \bbC^{gm\times  \wtd n_2}$ with the associated singular vector $\what \bb =R_q\bb(\bw)$. Also, $R_p\ba(\bw)=S_{qp}^{\HH}\what \bb$.
\end{itemize}
\end{proposition}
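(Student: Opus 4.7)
The plan is to exploit the Hermitian positive semi-definite structure of the quadratic forms appearing in \eqref{eq:rat-d-compt}. Set $\bc=[\ba^{\T},\bb^{\T}]^{\T}\in\bbC^{n+d+1}$; using the compact matrix form in \eqref{eq:rat-d-compt} together with the Kronecker identity $\BW = I_g\otimes W$ and the block structures of $\BF$ and ${\bf\Theta}$ in \eqref{eq:Ft}, one directly verifies that the objective equals $\bc^{\HH}A_{\bw}\bc$ and the constraint becomes $\bc^{\HH}B_{\bw}\bc=1$, with $A_{\bw},B_{\bw}$ as defined in \eqref{eq:dual_GEPA}--\eqref{eq:dual_GEPB}. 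Thus \eqref{eq:rat-d-compt} reduces to a generalized Rayleigh-quotient minimization over the Hermitian positive semi-definite pencil $(A_{\bw},B_{\bw})$. Part (i) then follows by invoking \cite[Theorems 2.1 and 4.1]{lilb:2013}, once one observes that for $\bw\in{\cal S}$ the feasible set $\{\bc:\bc^{\HH}B_{\bw}\bc=1\}$ is nonempty (e.g.\ any $\bb$ with $\phi_0(x_\ell),\ldots,\phi_d(x_\ell)$ evaluated at a node where $w_\ell>0$ yields $\bc^{\HH}B_{\bw}\bc>0$, so a suitable rescaling enforces the normalization).

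For part (ii), the positive semi-definiteness of $H_{\bw}=A_{\bw}-d(\bw)B_{\bw}$ follows from the characterization obtained in (i): for any $\bc$ with $\bc^{\HH}B_{\bw}\bc>0$, the minimum property yields $\bc^{\HH}A_{\bw}\bc/\bc^{\HH}B_{\bw}\bc\ge d(\bw)$, hence $\bc^{\HH}H_{\bw}\bc\ge 0$; for any $\bc$ with $\bc^{\HH}B_{\bw}\bc=0$ the inequality reduces to $\bc^{\HH}A_{\bw}\bc\ge 0$, which is immediate because $A_{\bw}=[-{\bf\Theta},\BF{\bf\Phi}]^{\HH}\BW[-{\bf\Theta},\BF{\bf\Phi}]\succeq 0$.

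The main content is part (iii), where both the numerator coefficients $\ba$ and the constraint are eliminated. I would first use the thin QR factorization $\sqrt{W}{\bf\Phi}=Q_qR_q$ to rewrite the constraint as $\|\what\bb\|_2=1$ with $\what\bb=R_q\bb$, noting that $\sqrt{W}{\bf\Phi}\bb=Q_q\what\bb$. The key identity, which uses that each diagonal $F_{ij}$ commutes with $\sqrt{W}$ and that $\BW=I_g\otimes W$, is
\begin{equation*}
\sqrt{\BW}\,\BF\,{\bf\Phi}\bb \;=\; \BF\,Q_q\what\bb,
\end{equation*}
verified blockwise via $\sqrt{W}F_{ij}{\bf\Phi}\bb=F_{ij}\sqrt{W}{\bf\Phi}\bb=F_{ij}Q_q\what\bb$. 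The inner minimization over $\ba$ of $\|\sqrt{\BW}(\BF{\bf\Phi}\bb-{\bf\Theta}\ba)\|_2^2=\|\BF Q_q\what\bb - Q_pR_p\ba\|_2^2$ is then an orthogonal projection onto ${\rm span}(Q_p)$, giving the residual $\|(I-Q_pQ_p^{\HH})\BF Q_q\what\bb\|_2^2$ with minimizer determined by $R_p\ba = Q_p^{\HH}\BF Q_q\what\bb = S_{qp}^{\HH}\what\bb$. Expanding the residual quadratic form yields
\begin{equation*}
d(\bw) \;=\; \min_{\|\what\bb\|_2=1} \what\bb^{\HH}\bigl(Q_q^{\HH}\BF^{\HH}\BF Q_q \;-\; Q_q^{\HH}\BF^{\HH}Q_pQ_p^{\HH}\BF Q_q\bigr)\what\bb,
\end{equation*}
and using $\BF^{\HH}\BF=\sum_{i,j}|F_{ij}|^2$ (again from the diagonality of $F_{ij}$) identifies the middle matrix as $S(\bw)=S_{\F}-S_{qp}S_{qp}^{\HH}$. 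The singular-value characterization follows because the minimization of $\|M\what\bb\|_2^2$ over unit-norm $\what\bb$ equals the squared smallest singular value of $M:=(I-Q_pQ_p^{\HH})\BF Q_q$, with $\what\bb$ the corresponding right singular vector.

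The main obstacle is notational rather than conceptual: the proof requires careful bookkeeping of the Kronecker structure $\BW=I_g\otimes W$ and of the block-diagonal ${\bf\Theta}$ so as to correctly extract $\BF Q_q\what\bb$ from $\sqrt{\BW}\BF{\bf\Phi}\bb$. A secondary subtlety, handled automatically by the statement, is that $\sqrt{W}{\bf\Phi}$ and $\sqrt{\BW}{\bf\Theta}$ may be rank-deficient when $\bw$ has zero entries; using thin QR factorizations with column counts $\wtd n_2=\rank(\sqrt{W}{\bf\Phi})$ and $\wtd n_1=\rank(\sqrt{\BW}{\bf\Theta})$ ensures $Q_p,Q_q$ have full column rank and the reductions above remain valid.
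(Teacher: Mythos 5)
Your proposal is correct and follows the same route the paper intends: the paper's proof of this proposition is deferred entirely to the scalar case (\cite[Proposition 3.1]{zhyy:2025}), and your argument --- reduction of \eqref{eq:rat-d-compt} to a quadratic form over the Hermitian positive semi-definite pencil $(A_{\bw},B_{\bw})$ via \cite{lilb:2013}, followed by the thin-QR/orthogonal-projection elimination of $\ba$ using the commutation of the diagonal blocks $F_{ij}$ with $\sqrt{W}$ --- is precisely the block-structured adaptation of that scalar proof, and it correctly handles the rank-deficient case through the full row rank of $R_p$ and $R_q$. In effect you have supplied the details the paper omits.
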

\begin{proof}
With the notation in \eqref{eq:notation1}, \eqref{eq:Ft},  \eqref{eq:a} and \eqref{eq:blkW}, the proof is the same as that for \cite[Proposition 3.1]{zhyy:2025}, and the details are omitted.
\end{proof}


For $\bw \ge 0$ and $W=\diag(\bw)$, we introduce the \(\bw\)-inner product (positive semidefinite): $\langle \by,\bz\rangle_{W}=\by^{\HH} W \bz$ and $\|\by\|_{W}=\sqrt{\by^{\HH}W\by}$, which, for simplicity,  will also be written as  $\langle \by,\bz\rangle_{\bw}$ and $\|\by\|_{\bw}$, respectively. 
Using the \(\bw\)-inner product, we can  rewrite the equations associated with the first \(n\) rows and the last \(d+1\) rows of \(A_{\bw}\bc(\bw)=d(\bw)B_{\bw}\bc(\bw)\) in \eqref{eq:dual_GEP} as  the following optimality conditions: 
 \begin{corollary}\label{coro:verticle_coro1}
  Let \(\bp= {\bf \Theta}\ba(\bw)=[\bp_{11}^{\T},\dots,\bp_{st}^{\T}]^{\T}\in \bbC^{mg}\) and \(\bq={\bf {\bf \Phi}}\bb(\bw)\in \bbC^{m}\) be from the solution of \eqref{eq:Lagrangedualfun3} at \(\bw\in {\cal S}\), where \(\bp_{ij}={\bf \Psi}_{ij}\ba_{ij}\in \bbC^m\) partitioned as in \eqref{eq:a}. Then for any $1\le i\le s$ and $1\le j\le t$, it holds that
    \begin{equation}\label{eq:verticle_coro1}
        F_{ij}\bq-\bp_{ij} \ \bot_{\bw}\  {\rm span}({\bf \Psi}_{ij}),\ \ \BF^{\HH}(\BF \bq-\bp)-d(\bw)\bq\  \bot_{\bw} \ {\rm span}({\bf {\bf \Phi}}),
    \end{equation}
    or equivalently, 
       $ {\BF} \bq-\bp \ \bot_{\BW}\  {\rm span}({\bf \Theta}),\ \ \BF^{\HH}(\BF \bq-\bp)-d(\bw)\bq\  \bot_{W} \ {\rm span}({\bf {\bf \Phi}}).$
\end{corollary}
Furthermore, as a generalization of \cite[Proposition 5.1]{zhha:2025}, we have the explicit form of the gradient of \(d(\bw)\).
\begin{proposition}\label{prop:gradient_d}
  For \(\bw>0\), let \(d(\bw)\) be the smallest eigenvalue of the Hermitian positive semi-definite generalized eigenvalue problem \eqref{eq:Heig}, and \(\bc(\bw)=\begin{bmatrix}
        \ba(\bw)\\ \bb(\bw)
    \end{bmatrix} \in \mathbb{C}^{n+d+1}\) be the associated eigenvector. Denote \(\bp=[\bp_{11}^{\T},\dots,\bp_{st}^{\T}]^{\T}={\bf \Theta}\ba\in \mathbb{C}^{mg}\) and \(\bq={\bf {\bf \Phi}} \bb \in \mathbb{C}^{m}\), where \(\bp_{ij}={\bf \Psi}_{ij} \ba_{ij} \in \mathbb{C}^{m}\) partitioned as in \eqref{eq:a}. If \(d(\bw)\) is a simple eigenvalue, then \(d(\bw)\) is differentiable at \(\bw\) and its gradient is
    \begin{align}\nonumber
        \nabla d(\bw)&=\begin{bmatrix}
            \sum_{i=1}^s\sum_{j=1}^t\left|f_{ij}(x_1)q(x_1)-p_{ij}(x_1)\right|^2-d(\bw)|q(x_{1})|^2\\ \sum_{i=1}^s\sum_{j=1}^t\left|f_{ij}(x_2)q(x_2)-p_{ij}(x_2)\right|^2-d(\bw)|q(x_{2})|^2 \\ \vdots \\ \sum_{i=1}^s\sum_{j=1}^t\left|f_{ij}(x_m)q(x_m)-p_{ij}(x_m)\right|^2-d(\bw)|q(x_{m})|^2
        \end{bmatrix} \\ \label{eq:d_gradient} &=\sum_{i=1}^s\sum_{j=1}^t|F_{ij}\bq-\bp_{ij}|^2-d(\bw)|\bq|^2 \in \mathbb{R}^m.
    \end{align}
\end{proposition}
\begin{proof}
  For \(\bw>0\), we know from Proposition \ref{prop:dual_GEP} that \(\wtd{n}_1=n\) and \(\wtd{n}_2=d+1\), and \(d(\bw)\) is a simple eigenvalue of a positive semi-definite Hermitian matrix $S(\bw)$ in \eqref{eq:Heig} and thus is differentiable at \(\bw\) (see e.g., \cite[Section 7.2.2]{govl:2013}). We obtain the formulation of the gradient \(\nabla d(\bw)\) via computing the directional derivative along a given direction \(\bh\in\mathbb{R}^m\). To this end, let \(\bw(t)=\bw+t\bh \) and \(\bc(\bw(t))\) be the corresponding eigenvector in \eqref{eq:Heig} for any sufficiently small \(t\in \bbR\). We have
  \begin{equation}\nonumber
        \left(A_{\bw(t)}-d(\bw(t))B_{\bw(t)}\right)\bc(\bw(t))=0,\ \ (\bc(\bw(t)))^{\HH}B_{\bw(t)}\bc(\bw(t))=1.
  \end{equation}
  Differentiate the first term with respect to \(t\) and note \(\dot{B}_{\bw(0)}=[0,{\bf \Phi}]^{\HH}{\rm diag}(\bh)[0,{\bf \Phi}]\) and \(\dot{A}_{\bw(0)}=[-{\bf \Theta},\BF{\bf \Phi}]^{\HH}\left(I_g \otimes {\rm diag}(\bh)\right)[-{\bf \Theta},\BF{\bf \Phi}]\) to have
  \begin{equation}\nonumber
        \left(\dot{A}_{\bw(0)}-\dot{d}(\bw(0))B_{\bw(0)}-d(\bw(0))\dot{B}_{\bw(0)}\right)\bc(\bw(0))+(A_{\bw(0)}-d(\bw(0))B_{\bw(0)})\dot{\bc}(\bw(0))=0.
  \end{equation}
  Pre-multiplying \((\bc(\bw(0)))^{\HH}\) on both sides and using 
  \begin{equation}\nonumber
        (\bc(\bw(0)))^{\HH}(A_{\bw(0)}-d(\bw(0))B_{\bw(0)})=0 \ \ {\rm and}\  \ (\bc(\bw(t)))^{\HH}B_{\bw(t)}\bc(\bw(t))=1,
  \end{equation}
  we get
  \begin{align}\nonumber
        (\nabla d(\bw))^{\T}\bh&=\dot{d}(\bw(0))=(\bc(\bw(0)))^{\HH}\dot{A}_{\bw(0)}\bc(\bw(0))-d(\bw(0))(\bc(\bw(0)))^{\HH}\dot{B}_{\bw(0)}\bc(\bw(0)) \\ \nonumber &=\left(\sum_{i=1}^s\sum_{j=1}^t|F_{ij}\bq-\bp_{ij}|^2-d(\bw)|\bq|^2\right)^{\T}\bh,
  \end{align}
leading to the desired result.
\end{proof}
\subsection{Implementation based on Vandermonde with Arnoldi process (V+A)}\label{subsec:VA}
Proposition \ref{prop:dual_GEP}, particularly   item (iii), offers a way to compute the dual function $d(\bw)$. Represented in the monomial basis for $p_{ij}~(1\le i\le s,~1\le j\le t)$ and $q$,  the Vandermonde with Arnoldi process (V+A) presented in \cite{brnt:2021} is effective to handle  the ill-conditionings in the involved Vandermonde basis matrices. V+A has been successfully employed in \cite{hoka:2020,zhsl:2023,zhyy:2025} to resolve the numerical stability in certain   approximation problems.   
 
In our case,  we note that the Vandermonde matrix ${\bf \Phi}=[\mathbf{1}_{m},X\mathbf{1}_{m},\dots,X^{d}\mathbf{1}_{m}]$ (i.e., $\phi_i(x)=x^{i-1},~i\ge 1$) with $X={\rm diag}(\bx)\in \bbC^{m\times m}$ gives 
$$
 \sqrt{W}{\bf \Phi}=[\sqrt{\bw},X\sqrt{\bw},\dots,X^{d}\sqrt{\bw}],
$$
and the orthonormal basis $Q_q$ of ${\rm span}(\sqrt{W}{\bf \Phi})$ can be  obtained via the Arnoldi process  \cite{brnt:2021,hoka:2020,zhsl:2023}. In particular,  assuming $\rank(\sqrt{W}{\bf \Phi})=d+1$ with  $Q_q\be_1=\frac{\sqrt{\bw}}{\|\sqrt{\bw}\|_2}$,
 the Arnoldi process produces (see \cite[Theorem 2.1]{zhsl:2023} and also \cite{hoka:2020})
\begin{equation}\label{eq:ArnoldiQ}
XQ_q=Q_{q}H_q+\gamma_{d+1}  \bq_{d+2}\be_{d+1}^{\T},~\gamma_{d+1} \in \bbC, 
\end{equation}
where $H_q\in \bbC^{(d+1)\times (d+1)}$ is an upper Hessenberg matrix and $\bq_{d+2}\perp Q_q$;  also, the triangular matrix $R_q=\|\sqrt{\bw}\|_2[\be_1,H_q\be_1\dots,H_q^{d}\be_1]$  and $\sqrt{W}{\bf \Phi}=Q_qR_q$ (see \cite[Theorem 2.1]{zhsl:2023}).  
By using the computed $H_d$ and $\gamma_{d+1}$ in \eqref{eq:ArnoldiQ},  the Vandermonde matrix  ${\bf \Phi}(y_1,\dots,y_{\wtd m};d)$ at new nodes $\{y_\ell\}_{j=1}^{\wtd m}$ takes ${\bf \Phi}(y_1,\dots,y_{\wtd m};d)=L_qR_q$ (see \cite[Theorem 2.1]{zhsl:2023}), where $L_q\in \bbC^{\wtd m\times (d+1)}$ satisfies  
\begin{equation}\label{eq:ArnoldiL}
YL_q=L_{q}H_q+\gamma_{d+1}  \bl_{d+2}\be_{d+1}^{\T},~L_q\be_1=\mathbf{1}_{\wtd m},~Y=\diag(y_1,\dots,y_{\wtd m}).
\end{equation}

Related with the numerator polynomial $p_{ij}$ in the rational function $r_{ij}=p_{ij}/q$ and its Vandermonde matrix ${\bf\Psi}_{ij}$ given in \eqref{eq:basisVmatrix}, the same applies to $\sqrt{W}{\bf\Psi}_{ij}$ to yield the orthonormal basis $P_{ij}$ for ${\rm span}(\sqrt{W}{\bf\Psi}_{ij})$. In this procedure, letting $\nu=\max_{ij}n_{ij}$, we only need to compute the orthonormal basis $P_{\nu}$ for ${\rm span}([\sqrt{\bw},X\sqrt{\bw},\dots,X^{\nu}\sqrt{\bw}])$ via  Arnoldi process  \cite{brnt:2021,hoka:2020,zhsl:2023}, and set $P_{ij}=P_{\nu}(:,1:n_{ij})$. Consequently, the matrices $Q_p$ and $R_p$ in the QR factorization of $\sqrt{\BW}{\bf \Theta}$ are 
$$
Q_p= \diag(P_{11},\dots,P_{s1},P_{21},\dots,P_{st}),~~R_p= \diag(R_{11},\dots,R_{s1},R_{21},\dots,R_{st}).
$$
With this and Proposition \ref{prop:dual_GEP},  $\sqrt{d(\bw)}$ is   the smallest singular value of   
\begin{equation}\label{eq:lawsonsvd}
(I-Q_pQ_p^{\HH})\BF Q_q=\left[\begin{array}{c}(I_{n_{11}}-P_{11}P_{11}^{\HH})F_{11}Q_q \\\vdots \\(I_{n_{s1}}-P_{s1}P_{s1}^{\HH})F_{s1}Q_q\\(I_{n_{12}}-P_{12}P_{12}^{\HH})F_{12} Q_q\\\vdots\\(I_{n_{st}}-P_{st}P_{st}^{\HH})F_{st}Q_q\end{array}\right]\in \bbC^{gm\times (d+1)}
\end{equation}
and the associated singular vector $\what \bb$ satisfying  $\what \bb=R_q\bb(\bw)$.  
Let $$\what \ba:=R_p\ba(\bw)=[(R_{11}\ba_{11}(\bw))^{\T},\dots,(R_{s1}\ba_{s1}(\bw))^{\T},(R_{12}\ba_{12}(\bw))^{\T},\dots,(R_{st}\ba_{st}(\bw))^{\T}]^{\T}.$$ 
On the other hand, by  Proposition \ref{prop:dual_GEP},   $\what \ba$ can be computed according to $\what \ba=S_{qp}^{\HH}\what \bb$; denoting  $$\what \ba=[\what \ba_{11}^{\T},\dots,\what \ba_{s1}^{\T},\what \ba_{12}^{\T},\dots,\what \ba_{st}^{\T}]^{\T},~~\what \ba_{ij}\in \bbC^{n_{ij}+1},$$ we then have the correspondence $\what \ba_{ij}=R_{ij}\ba_{ij}(\bw)$. 
 
It is worthy mentioning that computing the vectors $\what \ba$ and $\what \bb$ does not involve explicitly the upper triangular matrices $R_p$ and $R_q$. Moreover, using  $\what \ba$ and $\what \bb$,  we can compute  vectors $\bp_{ij}$ and $\bq$ given in \eqref{eq:vecpij} and \eqref{eq:vecq}, respectively, as  
\begin{subequations}\label{eq:pqvector}
\begin{align}\label{eq:pqvectora}
\bp_{ij} &={\bf \Psi}_{ij}\ba_{ij}(\bw)=W^{-\frac12}P_{ij}(R_{ij}\ba_{ij}(\bw))=W^{-\frac12}P_{ij}\what\ba_{ij},\\
\bq&={\bf \Phi}\bb(\bw) =W^{-\frac12}Q_qR_q\bb(\bw)=W^{-\frac12}Q_q\what\bb,
\label{eq:pqvectorb}
\end{align}
\end{subequations}  
and hence $\br_{ij}=r_{ij}(\bx)= [r_{ij}(x_1),\dots,r_{ij}(x_m)]^{\T}=\bp_{ij}./\bq\in \bbC^m$.

For the evaluation values of $r_{ij}(x)=p_{ij}(x)/q(x)$ at new nodes $\{y_\ell\}_{j=1}^{\wtd m}$, V+A relies on  \eqref{eq:ArnoldiL} and ${\bf \Phi}(y_1,\dots,y_{\wtd m};d)=L_qR_q$  to get 
\begin{equation}\label{eq:evalq}
q(\wtd \by)=[q(y_1),\dots,q(y_{\wtd m})]^{\T}= {\bf \Phi}(y_1,\dots,y_{\wtd m};d)R_q^{-1}(R_q\bb(\bw))=L_q\what \bb;
\end{equation}
the same applies to the numerator polynomial  $p_{ij}(x)$ to get
\begin{equation}\label{eq:evalpij}
p_{ij}(\wtd \by)=[p_{ij}(y_1),\dots,p_{ij}(y_{\wtd m})]^{\T}= {\bf \Psi}_{ij}(y_1,\dots,y_{\wtd m};n_{ij})R_{ij}^{-1}(R_{ij}\ba_{ij}(\bw))=L_{ij}\what \ba_{ij}.
\end{equation}
Thus it leads to the values of $\{r_{ij}(y_{\ell})\}_{\ell=1}^{\wtd m}$ and the vectors $r_{ij}(\by)=p_{ij}(\by)./q(\by)\in \bbC^{\wtd m}$. This technique has   previously been used in, e.g., \cite{hoka:2020,yazz:2023,zhyy:2025}.

\section{Strong duality}\label{sec_strongduality} 
In Section \ref{sbsec:dual}, we have listed some special cases for which strong duality \eqref{eq:strongdual} holds, and thereby, the original minimax approximations can be recovered from solving the dual problem \eqref{eq:rat-dual} in these cases. For the scalar version of rational minimax approximation (i.e., $s=t=1$ and $d\ge 1$), \cite{zhyy:2025} has shown that, strong duality \eqref{eq:strongdual} is equivalent to the classical Ruttan's sufficient condition \cite[Equ. (4.4)]{zhyy:2025} (see also \cite[Theorem 4]{this:1993}). In this section, we shall continue this issue by providing an easily checkable condition for strong duality \eqref{eq:strongdual}. This condition uses the information of the solution of the dual problem \eqref{eq:rat-dual}, and can be viewed as the generalized Ruttan's sufficient condition for the matrix-valued rational minimax approximation  \eqref{eq:bestf0}.

\begin{theorem}\label{thm:strongdualityRuttan}
Let $\bw^*\in{\cal S}$ be the maximizer of the dual  {problem} \eqref{eq:rat-dual}, and $(\ba^*,\bb^*)$ be the associated solution of \eqref{eq:rat-d-compt} that achieves the minimum $d(\bw^*)$, where $\ba^*$ contains the coefficient vectors $\ba_{ij}^*\in \bbC^{n_{ij}+1}$ partitioned as in \eqref{eq:a}. Denote 
$$R^*(x)= \left[\begin{array}{ccc}r_{11}^*(x) &  \cdots & r_{1t}^*(x)   \\\ \vdots& \ddots & \vdots  \\ r_{s1}^*(x) &  \cdots & r_{st}^*(x)\end{array}\right],~~r_{ij}^*(x)=\frac{p_{ij}^*(x)}{q^*(x)}$$ with  
$p_{ij}^*(x)=[\psi_0(x),\dots,\psi_{ij}(x)]\ba_{ij}^*$ and $q(x)=[\phi_0(x),\dots,\phi_d(x)]\bb^*$. Suppose $q^*(x_\ell)\ne 0$ for all $x_\ell\in {\cal X}$. Then  if 
\begin{equation}\label{eq:strongdualityRuttan}
{d(\bw^*)}= \max_{x_{\ell}\in {\cal X}}\|F(x_{\ell})-R^*(x_{\ell})\|_{\F}^2:=e(R^*),
\end{equation}
then strong duality \eqref{eq:strongdual} holds and $R^{*}$ is the global solution to \eqref{eq:bestf0}.
\end{theorem}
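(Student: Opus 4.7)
The proof is a clean sandwich/certificate-of-optimality argument, chaining weak duality with the optimality certificate \eqref{eq:strongdualityRuttan}. The plan is the following.

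First, I would verify that $R^{*}$ is a bona fide element of $\scrR_{(s,t)}$ and hence a feasible candidate for the primal problem \eqref{eq:bestf0}. Since $q^{*}(x_\ell)\ne 0$ for all $x_\ell\in {\cal X}$, in particular $q^{*}\not\equiv 0$ as a polynomial in $\bbP_d$, and each $p_{ij}^{*}\in \bbP_{n_{ij}}$ by construction from $\ba_{ij}^{*}$. Thus $R^{*}\in \scrR_{(s,t)}$, every $r_{ij}^{*}(x_\ell)$ is well-defined, and we immediately obtain the primal bound
\begin{equation*}
\eta_\infty \;=\; \inf_{R\in\scrR_{(s,t)}}\max_{1\le\ell\le m}\|F(x_\ell)-R(x_\ell)\|_{\F}^2 \;\le\; e(R^{*}).
\end{equation*}

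Next, I would invoke weak duality, namely Theorem \ref{thm:q-dual}, at the dual maximizer $\bw^{*}$:
\begin{equation*}
d(\bw^{*}) \;=\; \max_{\bw\in {\cal S}} d(\bw) \;\le\; \eta_\infty.
\end{equation*}
Combining the two displays with the hypothesis \eqref{eq:strongdualityRuttan} yields the chain
\begin{equation*}
d(\bw^{*}) \;\le\; \eta_\infty \;\le\; e(R^{*}) \;=\; d(\bw^{*}),
\end{equation*}
which forces every inequality to be an equality. The equality on the left gives strong duality $d^{*}=\eta_\infty$, i.e.\ \eqref{eq:strongdual}, while the equality on the right gives $e(R^{*})=\eta_\infty$, so $R^{*}$ attains the infimum of \eqref{eq:bestf0} and is a global minimax approximant.

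There is no real technical obstacle here; the only point that needs a moment of care is the first step, where one must check that the dual-derived tuple $(\{p_{ij}^{*}\},q^{*})$ actually produces an admissible $R^{*}\in \scrR_{(s,t)}$ (this is precisely what the assumption $q^{*}(x_\ell)\ne 0$ on ${\cal X}$ guarantees, and it is the reason this hypothesis is explicitly stated in the theorem). The content of the theorem therefore lies not in a delicate argument but in identifying \eqref{eq:strongdualityRuttan} as a verifiable, dual-side certificate of primal optimality, in exact analogy with Ruttan's classical sufficient condition recovered in \cite{zhyy:2025} for the scalar case $s=t=1$.
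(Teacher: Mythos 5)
Your proof is correct, and it takes a genuinely different (and shorter) route than the paper. You run the standard certificate-of-optimality argument: $R^*$ is primal feasible (since $q^*(x_\ell)\ne 0$ forces $q^*\not\equiv 0$, so $R^*\in\scrR_{(s,t)}$), hence $\eta_\infty\le e(R^*)$; weak duality (Theorem \ref{thm:q-dual}) gives $d(\bw^*)\le\eta_\infty$; and the hypothesis \eqref{eq:strongdualityRuttan} closes the sandwich $d(\bw^*)\le\eta_\infty\le e(R^*)=d(\bw^*)$, forcing equality throughout. The paper instead argues by contradiction: assuming $R^*$ is not optimal, it produces a competitor $\wtd R$ with $e(\wtd R)<e(R^*)$ and then evaluates the positive semi-definite quadratic form $H_{\bw^*}=A_{\bw^*}-d(\bw^*)B_{\bw^*}\succeq 0$ (Proposition \ref{prop:dual_GEP}(ii)) at the coefficient vectors of $\wtd R$ to deduce $\sum_\ell w_\ell^*|\wtd q(x_\ell)|^2\bigl(\|F(x_\ell)-\wtd R(x_\ell)\|_{\F}^2-e(R^*)\bigr)\ge 0$, hence $e(\wtd R)\ge e(R^*)$, a contradiction; weak duality is only invoked in the last line to convert $\eta_\infty=e(R^*)$ into strong duality. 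What each buys: your version is leaner and makes transparent that the theorem is exactly a dual-side optimality certificate, and it sidesteps the small technical point in the paper's argument that one must know $\sum_\ell w_\ell^*|\wtd q(x_\ell)|^2>0$ before concluding that some node satisfies $\|F(x_\ell)-\wtd R(x_\ell)\|_{\F}^2\ge e(R^*)$. The paper's quadratic-form computation, on the other hand, is essentially a pointwise re-derivation of weak duality at $\bw^*$ that exposes the weighted per-node inequality later exploited for complementary slackness in Theorem \ref{thm:complement}. Both are valid given the results already established in Sections \ref{sec_dual} and \ref{sec_d2}.
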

\begin{proof}
 We prove it by contradiction. Let $\eta_{\infty}$ be the infimum of \eqref{eq:bestf0}. Suppose $R^{*}$ is not a global solution to \eqref{eq:bestf0}. Then either the infimum of \eqref{eq:bestf0} is not attainable or  is attainable but $R^*$ is not a solution. In either case, it must hold that $e(R^*)>\eta_{\infty}$, and we can find an $\wtd R(x)=\wtd P(x)/\wtd q(x)\in \scrR_{(s,t)}$ with   $\wtd q(x_j)\ne 0~(x_j\in {\cal X})$ satisfying 
 \begin{equation}\label{eq:etawtd}
 e(\wtd R):=\max_{x_{\ell}\in {\cal X}}\|F(x_{\ell})-\wtd R(x_{\ell})\|_{\F}^2<e(R^*).
 \end{equation}
 Indeed, if the infimum of \eqref{eq:bestf0} is unattainable and $e(R^*)>\eta_\infty$, then by the definition of infimum \eqref{eq:bestf0},  we can find an $\wtd  R\in   \scrR_{(s,t)}$  satisfying  $\eta_{\infty}<e(\wtd R)<e(R^*)$; otherwise, if   \eqref{eq:bestf0} is attainable but $R^*$ is not a minimax approximation, then we can take $\wtd R$ as the rational minimax approximant of \eqref{eq:bestf0}. 
 
 Let $\wtd \ba=[\wtd \ba_{11}^{\T},\dots,\wtd \ba_{s1}^{\T},\wtd \ba_{12}^{\T},\dots,\wtd \ba_{st}^{\T}]^{\T}$ and $\wtd \bb$ be the associated coefficients vectors of $\{\wtd p_{ij}(x)\}$ and $\wtd q(x)$ in the basis of $\{\psi_{j}\}$ and $\{\phi_{j}\}$. By item (ii) of Proposition \ref{prop:dual_GEP}, the matrix $H_{\bw^*}=A_{\bw^*}-d(\bw^*)B_{\bw^*}$ is positive semi-definite where $A_{\bw^*}$ and $B_{\bw^*}$ are defined in \eqref{eq:dual_GEPA} and \eqref{eq:dual_GEPB}, respectively. This gives 
 \begin{align}\nonumber
 0\le& \left[\begin{array}{c} \wtd \ba  \\ \wtd\bb \end{array}\right]^{\HH}\left(A_{\bw^*}-d(\bw^*)B_{\bw^*}\right) \left[\begin{array}{c} \wtd\ba \\ \wtd\bb \end{array}\right] \\\nonumber
 =&\left[\begin{array}{c} \wtd \ba  \\ \wtd\bb \end{array}\right]^{\HH}[-{\bf \Theta}, \BF{\bf \Phi}]^{\HH}\BW^*[-{\bf \Theta}, \BF{\bf \Phi}]\left[\begin{array}{c} \wtd \ba  \\ \wtd\bb \end{array}\right] -d(\bw^*)\left[\begin{array}{c} \wtd \ba  \\ \wtd\bb \end{array}\right]^{\HH}[0_{m\times n},{\bf \Phi}]^{\HH} W^*[0_{m\times n},{\bf\Phi}]\left[\begin{array}{c} \wtd \ba  \\ \wtd\bb \end{array}\right]\\\nonumber
 =&\sum_{\ell=1}^m w_\ell^* \left(\sum_{i=1}^s\sum_{j=1}^t|f_{ij}(x_\ell)\wtd q(x_\ell)-\wtd  p_{ij}(x_\ell)|^2 -d(\bw^*)|\wtd  q(x_\ell)|^2\right)\\\nonumber
=&\sum_{\ell=1}^mw_\ell^*| \wtd  q(x_\ell)|^2\left(\sum_{i=1}^s\sum_{j=1}^t|f_{ij}(x_\ell)-\wtd r_{ij}(x_\ell)|^2-e(R^*)\right)\quad \quad {\rm (by ~\eqref{eq:strongdualityRuttan})}\\
=&\sum_{\ell=1}^mw_\ell^*| \wtd  q(x_\ell)|^2\left(\|F(x_\ell)-\wtd R(x_\ell)\|_{\F}^2-e(R^*)\right). 
 \end{align}
 This implies that there  exists at least one $x_\ell$ so that $\|F(x_\ell)-\wtd R(x_\ell)\|_{\F}^2\ge e(R^*)$, and therefore, 
 $$
e(\wtd R)=\max_{x_\ell\in {\cal X}} \|F(x_\ell)-\wtd R(x_\ell)\|_{\F}^2\ge e(R^*).
 $$
But this is a contradiction due to \eqref{eq:etawtd}. This proves that the infimum in  \eqref{eq:bestf0} is attainable by $R^*\in \scrR_{(s,t)}.$ Thereby, we have $\eta_{\infty}=e(R^*)$. Since $\eta_{\infty}$ is an upper  bound of the dual problem by weak duality \eqref{eq:weak-dual},  $\eta_{\infty}=e(R^*)$ ensures strong duality \eqref{eq:strongdual}. The proof is complete.
\end{proof}
 
 We remark that  \eqref{eq:etawtd} is a sufficient condition to guarantee the solvability of  \eqref{eq:bestf0} and strong duality \eqref{eq:strongdual}. The following theorem shows that for the matrix-valued polynomial minimax approximation with $s\ge 1,t\ge 1$ and $d=0$ in \eqref{eq:bestf0}, strong duality \eqref{eq:strongdual} holds.  
 \begin{theorem}\label{thm:strongdualPoly}
 For the matrix-valued polynomial minimax approximation with $s\ge 1,t\ge 1$ and $d=0$ in \eqref{eq:rats}, strong duality \eqref{eq:strongdual} is true and the matrix-valued minimax polynomial $P^*$ of \eqref{eq:bestf0} can be solved from the dual problem \eqref{eq:rat-dual}.  
\end{theorem}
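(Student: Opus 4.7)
The plan is to exploit the fact that when $d=0$, the denominator $q\in\bbP_0$ is a nonzero complex constant, which turns the problem into a standard convex minimization for which classical Lagrange duality applies directly.

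First I would use the scaling invariance $(p_{ij},q)\mapsto (p_{ij}/q,\,1)$, valid because $q$ is a nonzero constant, to reduce the linearization \eqref{eq:linearity} to
\[
\min_{\eta,\ba}\ \eta\quad\text{s.t.}\quad \sum_{i=1}^s\sum_{j=1}^t|f_{ij}(x_\ell)-p_{ij}(x_\ell;\ba_{ij})|^2\le \eta,\ \ 1\le\ell\le m.
\]
This is a convex quadratically constrained linear program in the real and imaginary parts of $(\ba,\eta)$. Slater's condition is trivially satisfied: any $\ba$ together with any $\eta$ strictly larger than the maximum of the constraint left-hand sides is a strictly feasible point. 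By the standard convex strong duality theorem (e.g., Boyd--Vandenberghe, Ch.~5.2.3), the optimal value of this convex QCQP coincides with the optimal value of its Lagrange dual.

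Next I would form the Lagrangian with multipliers $\bw\ge 0$,
\[
L(\eta,\ba;\bw)=\eta\bigl(1-\bw^{\T}\be\bigr)+\sum_{\ell=1}^m w_\ell\sum_{i=1}^s\sum_{j=1}^t\bigl|f_{ij}(x_\ell)-p_{ij}(x_\ell;\ba_{ij})\bigr|^2,
\]
and carry out the inner minimization in $(\eta,\ba)$: the $\eta$-coefficient forces $\bw\in{\cal S}$ to avoid $-\infty$, and then the residual is an unconstrained weighted least-squares problem in $\ba$. On the other hand, evaluating $d(\bw)$ from \eqref{eq:rat-d-compt} in the case $d=0$ gives, via the constraint $\sum_\ell w_\ell|q(x_\ell)|^2=|q|^2\bw^{\T}\be=1$ on constant $q$ and the same absorption $\tilde p_{ij}=p_{ij}/q$, the identical weighted LS problem. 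Hence the paper's dual $\max_{\bw\in{\cal S}}d(\bw)=d^*$ coincides exactly with the convex Lagrange dual, so $d^*=\eta_2$.

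Finally, since $d=0\le \min_{i,j}n_{ij}$, Theorem~\ref{thm:linearity} yields $\eta_\infty=\eta_2$, giving $\eta_\infty=d^*$, which is strong duality \eqref{eq:strongdual}. The matrix-valued minimax polynomial $P^*$ is recovered by taking any minimizer of the inner weighted LS problem at the dual optimum $\bw^*$ (existence follows from Theorem~\ref{thm:attainable_equi}); strong duality guarantees that this $P^*$ achieves $\eta_\infty$.

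I expect no substantive obstacle: convexity and Slater's condition for the polynomial case are straightforward, and attainability has already been established in Theorem~\ref{thm:attainable_equi}. The only bookkeeping step requiring care is the reconciliation between the paper's dual formulation \eqref{eq:rat-d-compt}, which keeps the constant $q$ as an explicit variable constrained by $\sum_\ell w_\ell|q(x_\ell)|^2=1$, and the textbook convex Lagrange dual after the scaling $q\equiv 1$; this is a routine change of variable once one observes that the constraint merely fixes $|q|=1$ while $q$ can be absorbed into the numerators without changing the objective.
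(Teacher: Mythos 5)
Your proof is correct and follows essentially the same route as the paper: reduce the $d=0$ case to a convex (QCQP) reformulation of the linearization, verify Slater's condition, and invoke standard convex strong duality from Boyd--Vandenberghe, Section 5.2.3. Your extra bookkeeping identifying the paper's dual \eqref{eq:rat-d-compt} (with the constraint forcing $|q|=1$) with the textbook Lagrange dual after absorbing the constant $q$ into the numerators is a detail the paper handles in its remark preceding Theorem \ref{thm:strongdualityRuttan} rather than in the proof itself, but it is the same argument.
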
 
 \begin{proof}
 For the polynomial case, we first note that \eqref{eq:bestf0} is equivalent to 
  \begin{align}\nonumber
&\inf_{\eta\in \bbR,~p_{ij}\in \bbP_{n_{ij}}}\eta \\\label{eq:linearitypoly}
 s.t., ~&\|F(x_\ell)-P(x_\ell)\|_{\F}^2\le \eta, \quad 1\le \ell\le m,
\end{align}
where $P:\bbC\rightarrow \bbC^{s\times t}$ is the matrix-valued polynomial with $p_{ij}\in \bbP_{n_{ij}}$ as  its $(i,j)$th component. 
Using notation in \eqref{eq:Ft}, we know that   each constraint in \eqref{eq:linearitypoly} can be written as  
$$
\|\BF\be_{\ell}-\Pi_\ell{\bf \Theta} \ba\|_2^2 - \eta\le 0,
$$
which is convex with respect to $\ba$ and $\eta$, where 
$$\Pi_\ell=\diag(\underbrace{\be_\ell \be_\ell^{\T},\dots, \be_\ell \be_\ell^{\T}}_{g})\in \bbR^{mg\times mg},~\be_\ell\in \bbR^m.$$  Therefore, \eqref{eq:linearitypoly} is a convex minimization and the infimum is achievable. Note that \eqref{eq:rat-dual} is the dual problem of \eqref{eq:linearitypoly}. Also, the Slater condition (see e.g., \cite[Section 5.2.3]{boyd:2004}) holds for \eqref{eq:linearitypoly} since there is $(\eta,  \ba)$ so that $\|\BF\be_{\ell}-\Pi_\ell{\bf \Theta} \ba\|_2^2 - \eta< 0$ for any $\ell=1,2,\dots,m$. Hence, strong duality \eqref{eq:strongdual} is ensured in this case by \cite[Section 5.2.3]{boyd:2004}.
 \end{proof}
In the framework of the dual problem \eqref{eq:rat-dual}, we point out that the condition \eqref{eq:strongdualityRuttan} for strong duality \eqref{eq:strongdual} is numerically checkable. In particular, when an   approximation $\bw\in {\cal S}$ of the global maximizer of the dual \eqref{eq:rat-dual}  together with the associated $R(x)$ is available, we then can compute the relative error of strong duality 
\begin{equation}\label{eq:reldual}
\epsilon(\bw):=\left|\frac{ e(R)-{d(\bw)}}{e(R)}\right|,~~{\rm where}~~e(R)=\max_{x_{\ell}\in {\cal X}}\|F(x_{\ell})-R(x_{\ell})\|_{\F}^2,
\end{equation}
as a measure of the accuracy of the rational approximation $R(x)$.

\section{Extreme points and complementary slackness}\label{sec:complement}
A noticeable property, known as the equioscillation property \cite[Theorem 24.1]{tref:2019a},  of the real polynomial minimax approximation describes the alternation of the maximum error occurring in the so-called  {\it extreme points} ({aka}  {\it reference points}). In our case, particularly, for the solution $R^*:\bbC\rightarrow \bbC^{s\times t}$ of \eqref{eq:bestf0},  extreme points refer to nodes  that  {achieve} $\|F(x_\ell)- R^*(x_\ell)\|_{\F}^2=e(R^*):=\max_{x_{j}\in {\cal X}}\|F(x_{j})-R^*(x_{j})\|_{\F}^2$. Those points are denoted by
\begin{equation}\label{eq:extremalset}
 {\cal  X}_e(R^*):=\left\{x_\ell\in {\cal X}:\|F(x_\ell)- R^*(x_\ell)\|_{\F}^2=e(R^*)\right\}. 
\end{equation}
In the following theorem,  under  strong duality \eqref{eq:strongdual},  we prove a complementary slackness and also show that  non-extreme points in ${\cal X}$ can be filtered out to reduce the computational costs and accelerate the convergence for solving the dual  {problem} \eqref{eq:rat-dual}. This is a generalization of \cite[Theorem 2.4]{zhyy:2025} to the matrix-valued functions. The same strategy has been used in Lawson's iteration for the linear  \cite{clin:1972,laws:1961,rice:1969,yazz:2023} and rational \cite{zhyy:2025} minimax approximations.

\begin{theorem}\label{thm:complement}
Suppose $R^*(x)=[r_{ij}^*(x)]\in \scrR_{(s,t)}$ is a solution to \eqref{eq:bestf0} with   $q^*(x_j)\ne 0~(\forall x_j\in {\cal X})$. If  strong duality \eqref{eq:strongdual} holds, then
\begin{itemize}
\item[(i)] for any solution $\bw^*$ of the   dual {problem} \eqref{eq:rat-dual} with  
  $d(\bw^*)$  given in \eqref{eq:rat-d-compt}, we have 
\begin{equation}\label{eq:complement}
 {\rm (complementary ~slackness)}\quad    w_\ell^*(e(R^*)-\|F(x_\ell)-R^*(x_\ell)\|_{\F}^2)=0, \quad\forall 1\le\ell\le m;
\end{equation}
\item[(ii)] {\rm (number of extreme points)} there are at least $\min_{1\le i\le s, 1\le j\le t}(n_{ij}+2)$ extreme points associated with $R^*(x)$, i.e., 
\begin{equation}\label{eq:NoExtr}
|{\cal  X}_e(R^*)|\ge \min_{1\le i\le s, 1\le j\le t}(n_{ij}+2);
\end{equation}
\item[(iii)]  {\rm (filtering out non-extreme points)}  for any subset $\breve{\cal X} \subseteq{\cal X}$ satisfying ${\cal X}_e(R^*)\subseteq \breve{\cal X}$,   $(\eta_\infty, \{p_{ij}^*\},q^*)$ solves 
\begin{align}\nonumber
&\inf_{\eta\in \bbR,~p_{ij}\in \bbP_{n_{ij}},~0\not \equiv q\in \bbP_{d}}\eta \\\label{eq:linearitysubset}
 s.t., ~&\sum_{i=1}^s \sum_{j=1}^t \left|f_{ij}(x_\ell)q(x_{\ell})-p_{ij}(x_{\ell})\right|^2\le \eta |q(x_{\ell})|^2, \quad x_\ell \in \breve{\cal X} 
\end{align}
for which   strong duality  holds too; that is, 
\begin{equation}\label{eq:dualPsubX}
\eta_\infty =\max_{\breve{\bw}\in \breve{{\cal S}}} \breve{d}(\breve{\bw}),
\end{equation}  
where $\breve{{\cal S}}=\{\breve{\bw}=[\breve{w}_1,\dots,\breve{w}_{\breve{s}}]^{\T}\in \bbR^{\breve{s}}: \breve{\bw}\ge 0 ~{\rm and } ~\breve{\bw}^{\T}\be=1\},
$ $\breve{s}=|\breve{{\cal X}}|$, and
\begin{equation}\label{eq:rat-d-subset}
\breve{d}(\breve{\bw})=\min_{\begin{subarray}{c} p_{ij}\in \bbP_{n_{ij}},~ q\in \bbP_{d}\\
            \sum_{x_\ell\in\breve{\cal X}} \breve{w}_\ell |q(x_\ell)|^2=1\end{subarray}}~\sum_{x_\ell\in\breve{\cal X}} \breve{w}_\ell \sum_{i=1}^s \sum_{j=1}^t |f_{ij}(x_\ell) q(x_\ell)-p_{ij}(x_\ell)|^2.
\end{equation}  
Moreover, for any solution $\breve{\bw}^*$ of the dual \eqref{eq:dualPsubX},  $(\{p_{ij}^*\},q^*)$ achieves the minimum  $\breve{d}(\breve{\bw}^*)=\eta_\infty$ of \eqref{eq:rat-d-subset}.
\end{itemize}
\end{theorem}
\begin{proof}
The proofs for items (i) and (iii)  follow that of \cite[Theorem 2.4]{zhyy:2025} with suitable modifications according to the block structure of \eqref{eq:bestf0}. 

For (i), as strong duality \eqref{eq:strongdual} holds and   $(\{p_{ij}^*\},q^*)$ are feasible (by a scaling for $q^*$ using $\bw^*$ to ensure $\sum_{\ell=1}^m w^*_\ell|q^*(x_\ell)|^2=1$) for the minimization \eqref{eq:rat-d-compt} at $\bw=\bw^*$, it holds  
\begin{align*}
\eta_\infty=d(\bw^*)& \le \sum_{\ell=1}^m w^*_\ell \left(\sum_{i=1}^s\sum_{j=1}^t|f_{ij}(x_\ell)q^*(x_\ell)-p_{ij}^*(x_\ell)|^2\right) \\
&=  \sum_{\ell=1}^m w^*_\ell|q^*(x_\ell)|^2\left(\sum_{i=1}^s\sum_{j=1}^t |f_{ij}(x_\ell)-R^*(x_\ell)|^2\right)\\
&=  \sum_{\ell=1}^m w^*_\ell|q^*(x_\ell)|^2\|F(x_\ell)-R^*(x_\ell)\|_{\F}^2\\
&\le  \eta_\infty \sum_{\ell=1}^m w^*_\ell|q^*(x_\ell)|^2=\eta_\infty;
\end{align*}
this gives $$w^*_\ell|q^*(x_\ell)|^2 \left(\eta_\infty-\|F(x_\ell)-R^*(x_\ell)\|_{\F}^2\right)=0, ~\forall 1\le  \ell\le m.$$ Because $q^*(x_\ell)\ne 0,~\forall x_\ell\in {\cal X}$, we have \eqref{eq:complement}.

For item (ii), the case $e(R^*)=0$ is trivial as   ${\cal  X}_e(R^*)={\cal X}$. For the case $e(R^*)>0$, by \eqref{eq:complement}, we first have $|{\cal J}_{\bw^*}|\le |{\cal  X}_e(R^*)|$ where
\begin{equation}\label{eq:Jw}
{\cal J}_{\bw^*}:=\{\ell: w_\ell^*\ne 0,~1\le \ell\le m\}.
\end{equation}
Now, assume $|{\cal  X}_e(R^*)|< \min_{1\le i\le s, 1\le j\le t}(n_{ij}+2)$, and assume w.l.g. that ${\cal J}_{\bw^*}=\{1,\dots,k\}$ with $1\le k\le \min_{1\le i\le s, 1\le j\le t}(n_{ij}+1)$. The later implies that for each $(i,j)$, we can find an interpolation polynomial $\tilde p_{ij}(x)\in \bbP_{n_{ij}}$ such that
\begin{equation}\label{eq:interpcond}
f_{ij}(x_\ell)q^*(x_\ell)=\tilde p_{ij}(x_\ell),~~\ell=1,2,\dots,k.
\end{equation}
Furthermore, by strong duality,
\begin{align*}
\eta_\infty&=e(R^*)=d(\bw^*)\\
&\le \sum_{\ell=1}^m w^*_\ell \left( \sum_{i=1}^s\sum_{j=1}^t|f_{ij}(x_\ell)   q^*(x_\ell)-\tilde   p_{ij}(x_\ell)|^2\right) ~~\quad\mbox{(as $(\{\tilde p_{ij}\}, q^*)$) may not be optimal)}\\
&=\sum_{\ell=1}^k w^*_\ell \left( \sum_{i=1}^s\sum_{j=1}^t|f_{ij}(x_\ell)   q^*(x_\ell)-\tilde   p_{ij}(x_\ell)|^2\right) ~~\quad(w^*_\ell=0,~k<\ell\le m)\\
&=0
\end{align*}
where the last equality is due to the interpolation conditions \eqref{eq:interpcond}. This contradicts against $e(R^*)>0$, and hence the assertion (ii) follows.

To prove (iii), according to  \eqref{eq:complement}, we know for any solution $\bw^*=[w_1^*,\dots,w_m^*]^{\T}$ of the dual problem \eqref{eq:rat-dual}, $w_j^*=0$ for any $j$ with $x_j\not\in {\cal X}_e(R^*)$ in \eqref{eq:extremalset}. For simplicity, assume $ {\cal X}_e(R^*)=\{x_j\}_{j=1}^{s}$ and $\breve{\cal X}=\{x_j\}_{j=1}^{\breve{s}}$ with $\breve{s}\ge s$. Denote by $\breve{\eta}_\infty$ the infimum of \eqref{eq:linearitysubset} and   $\breve{d}^*=\max_{\breve{\bw}\in \breve{{\cal S}}} \breve{d}(\breve{\bw})$. Similarly to weak duality \eqref{eq:weak-dual} in Theorem \ref{thm:q-dual}, we have   weak duality $\breve{d}^*\le \breve{\eta}_\infty$; note that this weak duality holds without requiring the attainability condition of \eqref{eq:linearitysubset}.  Also, as $(\eta_\infty, \{p_{ij}\}^*,q^*)$ are feasible for \eqref{eq:linearitysubset}, $\breve{\eta}_\infty\le {\eta}_\infty$; thus 
\begin{equation}\label{eq:subdual1}
\breve{d}^*=\breve{d}(\breve{\bw}^*)\le \breve{\eta}_\infty\le {\eta}_\infty=d(\bw^*)
\end{equation} 
where $\breve{\bw}^*$ and  $ {\bw}^*$ are solutions  for  the dual problems \eqref{eq:dualPsubX} and  \eqref{eq:rat-dual}, respectively.  On the other hand, note, by the complementary slackness, that $w_j^*=0$ for any $j\ge s$, and hence
\begin{align*}
{d}({\bw}^*)&=\min_{\begin{subarray}{c}p_{ij}\in \bbP_{n_{ij}},~ q\in \bbP_{d}\\
            \sum_{\ell=1}^m  {w}_\ell^* |q(x_\ell)|^2=1\end{subarray}}\sum_{\ell=1}^m  {w}_\ell^* \left(\sum_{i=1}^s\sum_{j=1}^t |f_{ij}(x_\ell) q(x_\ell)-p_{ij}(x_\ell)|^2\right)\\
            &=\min_{\begin{subarray}{c} p_{ij}\in \bbP_{n_{ij}},~q\in \bbP_{d}\\
            \sum_{\ell=1}^{\breve{s}}  {w}_\ell^* |q(x_\ell)|^2=1\end{subarray}}\sum_{\ell=1}^{\breve{s}}  {w}_\ell^* \left(\sum_{i=1}^s\sum_{j=1}^t|f_{ij}(x_\ell) q(x_\ell)-p_{ij}(x_\ell)|^2\right) = \breve{d}({\bw}^*_{[1:\breve{s}]}) \le \breve{d}^*,
\end{align*}
where the last inequality follows because $\breve{d}^*$ is the maximum of \eqref{eq:dualPsubX} over all $\breve\bw\in \breve{\cal S}$. Together with \eqref{eq:subdual1}, we conclude   $\breve{d}^*=\breve{d}(\breve{\bw}^*)= \breve{\eta}_\infty= {\eta}_\infty=d(\bw^*)$. Finally, for any solution $\breve{\bw}^*$ of the dual \eqref{eq:dualPsubX}, suppose $(\{\hat p_{ij}\},\hat q)$ achieve the minimum  $\breve{d}(\breve{\bw}^*)=\eta_\infty$ of \eqref{eq:rat-d-subset}. Scale $(\{ p_{ij}^*\},  q^*)$ to satisfy the constraint $\sum_{\ell=1}^{\breve{s}}  \breve{w}_\ell^* |q^*(x_\ell)|^2=1$ (i.e., $(\{ p_{ij}^*\}, q^*)$ are feasible for the minimization \eqref{eq:rat-d-subset} with $\breve{\bw}=\breve{\bw}^*$);  thus noticing  
\begin{align*}
\breve{d}^*=\breve{d}(\breve{\bw}^*)& = \sum_{\ell=1}^{\breve{s}} \breve{w}^*_\ell\left(\sum_{i=1}^s\sum_{j=1}^t|f_{ij}(x_\ell)\hat q(x_\ell)-\hat p_{ij}(x_\ell)|^2\right)\\
&\le  \sum_{\ell=1}^{\breve{s}} \breve{w}^*_\ell\left(\sum_{i=1}^s\sum_{j=1}^t|f_{ij}(x_\ell) q^*(x_\ell)- p_{ij}^*(x_\ell)|^2\right)\\
&=  \sum_{\ell=1}^{\breve{s}} \breve{w}^*_\ell |q^*(x_\ell)|^2 \|F(x_\ell)-R^*(x_\ell)\|_{\F}^2 \le  \eta_\infty\sum_{\ell=1}^{\breve{s}} \breve{w}^*_\ell|q^*(x_\ell)|^2=\eta_\infty=\breve{d}^*,
\end{align*}
we assert that $(\{p_{ij}^*\},q^*)$ achieves the minimum  $\breve{d}(\breve{\bw}^*)=\eta_\infty$ of \eqref{eq:rat-d-subset}.
\end{proof} 
As a consequence of Theorem \ref{thm:strongdualPoly} and item (ii) of Theorem \ref{thm:complement},  we have 
\begin{corollary}\label{cor:extPntpoly}
For the matrix-valued polynomial minimax approximation with $s\ge 1,t\ge 1$ and $d=0$ in \eqref{eq:rats}, $|{\cal  X}_e(P^*)|\ge \min_{1\le i\le s, 1\le j\le t}(n_{ij}+2)$   for the solution  $P^*$ of \eqref{eq:bestf0}.
\end{corollary}

\section{{\sf m-d-Lawson}: a method for rational minimax approximation of matrix-valued functions}\label{sec_lawson} 
Lawson's iteration \cite{laws:1961} is a traditional method for solving the discrete linear minimax approximations of complex-valued functions. Besides some previous convergence of Lawson's iteration for the polynomial minimax approximation (see e.g., \cite{bama:1970,clin:1972,elwi:1976,rice:1969}), recent works in \cite{yazz:2023,zhha:2025,zhyy:2025} have revealed that Lawson's iteration is indeed a method for solving the associated dual problems. Closely relying upon this connection,  \cite{zhha:2025,zhyy:2025} proposed a dual Lawson's iteration, namely, {\tt d-Lawson},  for the discrete rational minimax approximation, and established basic convergence analysis. Along this way, we propose a dual Lawson's iteration (Algorithm \ref{alg:mdLawson})  for the matrix-valued rational minimax approximation \eqref{eq:bestf0}. In our paper, this algorithm is termed    {\sf m-d-Lawson}, indicating its {\sf d}ual-based {\sf Lawson} iteration for {\sf m}atrix-valued functions.

\begin{algorithm}[h!!!]
\caption{A matrix-valued dual Lawson's iteration ({\sf m-d-Lawson}) for \eqref{eq:bestf0}} \label{alg:mdLawson}
\begin{algorithmic}[1]
\renewcommand{\algorithmicrequire}{\textbf{Input:}}
\renewcommand{\algorithmicensure}{\textbf{Output:}}
\REQUIRE Given   $\{(x_\ell,F(x_\ell))\}_{\ell=1}^m$  with $x_\ell\in \Omega$, a   tolerance for strong duality $\epsilon_r>0$, the maximum number $k_{\rm maxit}$ of iterations, a new vector $\by=[y_1,\dots,y_{\wtd m}]^{\T}\in \bbC^{\wtd m}$ of nodes; 
\ENSURE   the evaluations   $R^*(y_\ell)\in \bbC^{s\times t}$ of the rational minimax approximant  $R^*\in  \scrR_{(s,t)}$ of \eqref{eq:bestf0} at  $y_\ell$ for $1\le \ell\le \wtd m$.
        \smallskip

\STATE  (Initialization) Let $k=0$; choose $0<\bw^{(0)}\in {\cal S}$  and a  tolerance $\epsilon_w$ for the weights;
\STATE (Filtering) Remove nodes $x_\ell$ with $w_\ell^{(k)}<\epsilon_w$;
\STATE Compute  $d(\bw^{(k)})$ and the associated  $r_{ij}^{(k)}(\bx)=\bp_{ij}^{(k)}./\bq^{(k)}$ according to \eqref{eq:pqvector};
\STATE (Stop rule and evaluation)  Stop either if $k\ge k_{\rm maxit}$ or 
\begin{equation}\label{eq:stop}
\epsilon(\bw^{(k)}):=\left|\frac{ {e(R^{{(k)}})-d(\bw^{(k)})}}{e(R^{{(k)}})}\right|<\epsilon_r,~~{\rm where}~~e(R^{{(k)}})=\max_{x_\ell\in {\cal X}}\|F(x_\ell)-R^{(k)}(x_\ell)\|_{\F}^2,
\end{equation}
and compute $r_{ij}^*(\by)=p_{ij}^*(\by)./q^*(\by)\in \bbC^{\wtd m}$, $\{R^*(y_\ell)\}_{\ell=1}^{\wtd m}$ according to \eqref{eq:evalpij} and \eqref{eq:evalq}; 

\STATE (Updating weights) Update $\bw^{(k+1)}$ according to   ($\beta > 0$)
\begin{equation}\nonumber
w_\ell^{(k+1)}=\frac{w_\ell^{(k)}\|F(x_\ell)-R^{(k)}(x_\ell)\|_{\F}^{\beta}}{\sum_{i}w_i^{(k)}\|F(x_i)-R^{(k)}(x_i)\|_{\F}^{\beta}},~~\forall \ell,
\end{equation} 
and   go to Step 2 with $k=k+1$.
\end{algorithmic}
\end{algorithm}  

In each iteration,  {\sf m-d-Lawson} consists of a calling of a thin SVD in Step 3 (see \eqref{eq:lawsonsvd}) and  a basic updating of the weight vector  $\bw$ in Step 5. The parameter $\beta>0$ involving in Step 5 is called the {\it Lawson exponent} \cite{clin:1972,laws:1961} and is a factor for convergence.  In our numerical testing, we choose $\beta=1$, and set  $\epsilon_w=0, \epsilon_r=10^{-3}$ in Step 2 and Step 4, respectively.

Regarding the convergence of {\sf m-d-Lawson}, we   prove in Section \ref{sec:convergence} that  for polynomial minimax approximations, the choice $\beta=1$ is near-optimal, and the sequence $\{d(\bw^{(k)})\}$ monotonically increases for any $\beta\in (0,2)$;  for   rational minimax approximations,  any sufficiently small $\beta>0$   generically ensures the monotonic convergence. These results generalize the scalar-case ($s=t=1$) convergence properties established in \cite{zhha:2025}.

\section{Convergence analysis of {\sf m-d-Lawson}}\label{sec:convergence}

Throughout this section, we set $\epsilon_w=0$ in Step 2 of Algorithm \ref{alg:mdLawson} and also assume  the following two fundamental conditions:
\begin{itemize}
    \item [(\textbf{A1})] \(q_{\ell}^{(k)}:=\be_\ell^{\T}\bq^{(k)}=q^{(k)}(x_{\ell}) \ne 0\) for any \(1\le \ell\le m\) and any \(k \ge 0\);
    \item [(\textbf{A2})] the cardinality \(\left|{\cal J}_{\bw^{(k)}}\right| \ge \max\{d+1,\max_{1 \le i \le s,1 \le j \le t}\{n_{ij}+1\}\}\), where \({\cal J}_{\bw^{(k)}}:=\{\ell: w_{\ell}^{(k)} \ne 0,~1\le \ell\le m\}\).
\end{itemize}
\begin{proposition}\label{prop:index_update}
    If \(d(\bw^{(k)}) > 0\), then
    \begin{equation}\label{eq:index_update}
        {\cal J}_{\bw^{(k+1)}}=\{\ell: w_{\ell}^{(k)}\tau_{\ell}^{(k)} \ne 0, ~1\le \ell\le m\},\ \ {\rm where}\ \ \tau_{\ell}^{(k)}:=\|F(x_{\ell})-R^{(k)}(x_{\ell})\|_{\F}.
    \end{equation}
\end{proposition}
\begin{proof}
    By   (\textbf{A1}),  \(d(\bw^{(k)})>0\) implies \(\sum\limits_{\ell=1}^{m}w^{(k)}_{\ell}\|F(x_j)-R^{(k)}(x_\ell)\|_{\F}^{\beta}>0\). According to Lawson's updating   in Step 5 of Algorithm \ref{alg:mdLawson}, \eqref{prop:index_update} holds as \(w_{\ell}^{(k+1)} = 0 \Longleftrightarrow w_{\ell}^{(k)}\tau_{\ell}^{(k)} = 0\).
\end{proof}

\subsection{A lower bound of the dual objective function value}\label{sbsec:lowerbound_d}
From the perspective of dual problem \eqref{eq:rat-dual} optimization, Lawson's iteration can be interpreted as a gradient ascent method. Indeed, suppose the current \(0 < \bw \in {\cal S}\), \(q_{j}=\bq^{\rm T}\be_{j} \ne 0, \ \forall 1\le j\le m\) and \(d(\bw)\) is the simple eigenvalue of the matrix pencil \((A_{\bw},B_{\bw})\), then 
\begin{equation}\nonumber
    \bg(\bw):={\rm diag}\left(\frac{w_1}{|q_1|^2},\dots,\frac{w_m}{|q_m|^2}\right)\nabla d(\bw) \in \mathbb{R}^m
\end{equation}
is an ascent direction for the dual function \(d(\bw)\) because by Proposition \ref{prop:gradient_d}
\begin{equation}\nonumber
    \bg(\bw)^{\rm T}\nabla d(\bw)=\nabla d(\bw)^{\rm T}{\rm diag}\left(\frac{w_1}{|q_1|^2},\dots,\frac{w_m}{|q_m|^2}\right)\nabla d(\bw)>0.
\end{equation}
Relying on this direction and the step-size \(s=\frac{1}{d(\bw)}>0\), the   update 
\begin{equation}\nonumber
 \bw+s\bg=\frac{1}{d(\bw)}\begin{bmatrix}
        w_1\|F(x_1)-R(x_1)\|_{\F}^2 \\ \vdots \\ w_m\|F(x_m)-R(x_m)\|_{\F}^2
    \end{bmatrix}
\end{equation}
can be  scaled to have \(\wtd \bw= \frac{\bw+s\bg}{\be^{\T}(\bw+s\bg)} \in {\cal S}\). This says that  an iteration of {\sf m-d-Lawson}  in Algorithm \ref{alg:mdLawson} with the Lawson exponent \(\beta=2\) is an ascent projection gradient iteration. While this connection provides valuable insight into {\sf m-d-Lawson}, proving the precise monotonic convergence of the sequence $\{d(\bw^{(k)})\}_k$ requires further investigation. This constitutes the main goal of this section.
\begin{lemma}\label{lem:lem_verticle1}
Let $(\ba^{(k)},\bb^{(k)})$ be the solution to \eqref{eq:Lagrangedualfun3} at $\bw^{(k)}\in {\cal S}$ and  $(\{p_{ij}^{(k)}\},q^{(k)})$ be the associated polynomials.  Assume (\textbf{A1}) and (\textbf{A2}). 
    Let \(\bp^{(k)}={\bf \Theta} \ba^{(k)}$, $\bq^{(k)}={\bf \Phi} \bb^{(k)}$, $\mathbf{1}_g=[1,\dots,1]^{\rm T} \in \mathbb{R}^{g}$  with  $g=st$,  and
    \begin{align*}
\boldsymbol{\tau}^{(k)}(\beta):= [(\tau_1^{(k)})^\beta,\dots, (\tau_m^{(k)})^\beta]^{\T}\in \bbR^m, 
    \end{align*}
    where $\tau_{\ell}^{(k)}=\|F(x_{\ell})-R^{(k)}(x_{\ell})\|_{\F}~(1\le \ell \le m)$ and \(R^{(k)}(x)=[p^{(k)}_{ij}(x)/q^{(k)}(x)]:\mathbb{C} \rightarrow \mathbb{C}^{s \times t}\). If \(d(\bw^{(k)})>0\), then
    \begin{equation}\label{eq:yperp}
        \frac{\left(\BF\bq^{(k)}-\bp^{(k)}\right).}{\mathbf{1}_g \otimes \boldsymbol{\tau}^{(k)}(\beta)}\  \bot_{{\BW}^{(k+1)}}\ {\rm span}({\bf \Theta}),
    \end{equation}
where ${\BW}^{(k+1)}$ is defined by \eqref{eq:blkW} and we set \(0\frac{0}{0}=0\) for convenience.
\end{lemma}
\begin{proof}
Denote \(\by=\frac{\left(\BF\bq^{(k)}-\bp^{(k)}\right).}{\mathbf{1}_g \otimes \boldsymbol{\tau}^{(k)}(\beta)}\) and 
 \begin{equation}\label{eq:gammak}
 \gamma_{\beta}^{(k)}:=\sum\limits_{\ell=1}^{m}w_{\ell}^{(k)}\|F(x_{\ell})-R^{(k)}(x_{\ell})\|_{\F}^{\beta}=\sum\limits_{\ell=1}^{m}w_{\ell}^{(k)}(\tau_\ell^{(k)})^{\beta}.
 \end{equation}
 To simplify notation, we drop the iteration superscript (e.g. $\bp \equiv \bp^{(k)}$ and $\boldsymbol{\tau} \equiv\boldsymbol{\tau}^{(k)}$) and use tilded notation for the next iterate (e.g. $\wtd{\bp} \equiv \bp^{(k+1)}$). This convention extends to all variables.


  Since \(d(\bw)>0\), \(\gamma_{\beta} \ne 0\). For \(\forall \bt=[\bt_{11}^{\T},\dots,\bt_{st}^{\T}]^{\T} \in {\rm span}({\bf \Theta})\) with $\bt_{ij}\in \bbC^m$ partitioned as in \eqref{eq:a}, by Corollary \ref{coro:verticle_coro1}, we have 
  \begin{align}\nonumber
  0&=\langle \bt,\BF\bq-\bp\rangle_{\BW}\\\nonumber
  &=\bt^{\HH}\BW\left(\BF\bq-\bp\right)=\sum_{i=1}^s\sum_{j=1}^t \bt_{ij}^{\HH}W(F_{ij}\bq-\bp_{ij})
  \\\label{eq:twy}
  &=\sum_{\ell=1}^m w_\ell\left(\sum_{i=1}^s\sum_{j=1}^t (\be_\ell^{\T} \bar\bt_{ij}) (f_{ij}(x_\ell)q(x_\ell)-p_{ij}(x_\ell))\right).
  \end{align} 
According to Lawson's updating rule in Algorithm \ref{alg:mdLawson}, we have 
$$
\wtd \BW=\frac{1}{\gamma_\beta} (I_g \otimes\diag(\boldsymbol{\tau}(\beta))) \BW,~\wtd w_\ell =\frac{w_\ell \tau_\ell^{\beta}}{\gamma_{\beta}} ~(1\le \ell \le m).
$$  
By $\tau_\ell^2=\|F(x_\ell) - R(x_\ell)\|_{\F}^2 =\frac{1}{|q(x_\ell)|^2}\sum_{i=1}^s\sum_{j=1}^t|f_{ij}(x_\ell)q(x_\ell)-p_{ij}(x_\ell)|^2$  and $q(x_\ell)\ne 0$, a node $x_\ell$ satisfying $\tau_\ell = 0$ implies $f_{ij}(x_\ell)q(x_\ell)=p_{ij}(x_\ell)$ for all pairs $(i,j)$ with $1\le i\le s, 1\le j\le t$. Also, $w_\ell=0$ implies $\wtd w_\ell=0$. 
Thus, based on \eqref{eq:twy}, nodes $x_\ell$ with $w_\ell\tau_\ell=0$, i.e., $\ell \not\in {\cal J}_{\wtd \bw}$ in \eqref{eq:index_update}, satisfy $\wtd w_\ell=0$ and contribute nothing to $\langle \bt, \BF\bq - \bp \rangle_{\BW}$.
Consequently,  we obtain from \eqref{eq:twy} that
  \begin{align}\nonumber
      0&=\langle \bt,\BF\bq-\bp\rangle_{\BW}=\sum_{\ell\in {\cal J}_{\wtd \bw}}  w_\ell\left(\sum_{i=1}^s\sum_{j=1}^t (\be_\ell^{\T}\bar \bt_{ij}) (f_{ij}(x_\ell)q(x_\ell)-p_{ij}(x_\ell))\right)\\\nonumber
      &=\sum_{\ell\in {\cal J}_{\wtd \bw}}  \frac{\wtd w_\ell \gamma_\beta}{\tau_\ell^\beta}\left(\sum_{i=1}^s\sum_{j=1}^t (\be_\ell^{\T} \bar\bt_{ij}) (f_{ij}(x_\ell)q(x_\ell)-p_{ij}(x_\ell))\right)\\\nonumber
      &=\gamma_\beta\sum_{\ell\in {\cal J}_{\wtd \bw}} \wtd w_\ell\left(\sum_{i=1}^s\sum_{j=1}^t (\be_\ell^{\T} \bar\bt_{ij})  \frac{  f_{ij}(x_\ell)q(x_\ell)-p_{ij}(x_\ell)}{\tau_\ell^\beta}\right)
      \\\label{eq:0/0}
            &=\gamma_\beta\sum_{\ell=1}^m\wtd w_\ell\left(\sum_{i=1}^s\sum_{j=1}^t (\be_\ell^{\T} \bar\bt_{ij})  \frac{f_{ij}(x_\ell)q(x_\ell)-p_{ij}(x_\ell)}{\tau_\ell^\beta} \right) \\
    \nonumber
    &= \gamma_{\beta}\bt^{\HH}\wtd{\BW}\by=\gamma_{\beta}\langle\bt,\by\rangle_{\wtd{\BW}}.
  \end{align}
In deriving \eqref{eq:0/0}, we retain summations over nodes $x_\ell$ where $\wtd{w}_\ell = 0$ (i.e., $\ell \notin  {\cal J}_{\wtd{\bw}}$). These nodes correspond to either $w_\ell = 0$ or $\tau_\ell =|f_{ij}(x_\ell)q(x_\ell) - p_{ij}(x_\ell)| = 0$ for all $(i, j)$. For the latter scenario, we adopt the convention $0\frac{0}{0} = 0$ for terms of the form $(f_{ij}(x_\ell)q(x_\ell) - p_{ij}(x_\ell)) / \tau_\ell^\beta$.
\end{proof}
We next  provide a   relation between \(d(\bw^{(k)})\) and \(d(\bw^{(k+1)})\). 
\begin{theorem}\label{thm:dw_lower_bound}
Using the same notation and assumptions in Lemma \ref{lem:lem_verticle1}, it holds that
    \begin{equation}\label{eq:dw_lower_bound}
        \sqrt{d(\bw^{(k+1)})} \ge d(\bw^{(k)})\frac{\left|\left(\bq^{(k+1)}\right)^{\HH}W^{(k)}\bq^{(k)}\right|}{\gamma^{(k)}_{\beta}\zeta^{(k)}_{\beta}},
    \end{equation}
 where \(W^{(k)}={\rm diag}(\bw^{(k)})\), $\gamma^{(k)}_{\beta}$ is given in \eqref{eq:gammak} and 
    \begin{equation}\nonumber
        \zeta^{(k)}_{\beta}=\left\|\frac{\left(\BF\bq^{(k)}-\bp^{(k)}\right).}{\mathbf{1}_g \otimes \boldsymbol{\tau}^{(k)}(\beta)}\right\|_{\BW^{(k+1)}},~~{\BW}^{(k+1)}=I_g\otimes W^{(k+1)}.
    \end{equation}
\end{theorem}
\begin{proof}
We consider only the case where $d(\bw^{(k)})>0$, as  \eqref{eq:dw_lower_bound} holds trivially otherwise. 
Omit the subscript \(k\) in each quantity related with the \(k\)th iteration as  in the proof of Lemma \ref{lem:lem_verticle1}. 

Recall  \eqref{eq:index_update}: \({\cal J}_{\wtd{\bw}}=\{\ell: \wtd{{w}}_{\ell} \ne 0,~1\le \ell\le m\}=\{\ell:w_{\ell}\tau_{\ell} \ne 0,~1\le \ell\le m\}\).  Since \(d(\wtd{\bw})=\|\BF\wtd{\bq}-\wtd{\bp}\|_{\wtd{\BW}}^2\), and    \(\BF\wtd{\bq}-\wtd{\bp}\ \bot_{\wtd{\BW}}\ \wtd{\bp}\) (note $\wtd{\bp}\in {\rm span}({\bf \Theta})$) by Corollary \ref{coro:verticle_coro1}, we get
  \begin{equation}\nonumber
    0 \le d(\wtd{\bw})=\langle\BF\wtd{\bq}-\wtd{\bp},\BF\wtd{\bq}\rangle_{\wtd{\BW}}=\sqrt{d(\wtd{\bw})}\left\langle\frac{\BF\wtd{\bq}-\wtd{\bp}}{\|\BF\wtd{\bq}-\wtd{\bp}\|_{\wtd{\BW}}},\BF\wtd{\bq}\right\rangle_{\wtd{\BW}},
  \end{equation}
  which implies \(\sqrt{d(\wtd{\bw})}=\left\langle\frac{\BF\wtd{\bq}-\wtd{\bp}}{\|\BF\wtd{\bq}-\wtd{\bp}\|_{\wtd{\BW}}},\BF\wtd{\bq}\right\rangle_{\wtd{\BW}}\).

  Let $${\by}=\frac{\left(\BF\bq-\bp\right).}{\mathbf{1}_g \otimes \boldsymbol{\tau}(\beta)}=[\by_{11}^{\T},\dots,\by_{st}^{\T}]^{\T}  \mbox{ with } \by_{ij}=\frac{(F_{ij}\bq-\bp_{ij}).}{\boldsymbol{\tau}(\beta)}\in \bbC^{m}.$$ According to \eqref{eq:yperp}, we have $\by_{ij}\bot_{\wtd{\bw}}\ {\rm span}({\bf \Psi}_{ij})$.
    For any pair $(i,j)$, the optimality condition  $F_{ij}\wtd{\bq}-\wtd{\bp}_{ij}  \bot_{\wtd{\bw}}\ {\rm span}({\bf\Psi}_{ij})$  in Corollary \ref{coro:verticle_coro1} implies that $$\wtd{\bp}_{ij}=\argmin_{\bz\in {\rm span}({\bf \Psi}_{ij})}\|F_{ij}\wtd{\bq}-\bz\|_{\wtd\bw};$$  thus from $\by_{ij}\bot_{\wtd{\bw}}\ {\rm span}({\bf \Psi}_{ij})$, assumptions (\textbf{A1}), (\textbf{A2}) and \cite[Lemma 3.1]{zhha:2025}, it holds that 
    $$
    \left\langle F_{ij}\wtd{\bq},\frac{F_{ij}\wtd{\bq}-\wtd{\bp}_{ij}}{\|F_{ij}\wtd{\bq}-\wtd{\bp}_{ij}  \|_{\wtd{\bw}}}\right\rangle_{\wtd{\bw}}\ge \left|\left\langle F_{ij}\wtd{\bq},\frac{ {\by}_{ij}}{\| {\by}_{ij}\|_{\wtd{\bw}}}\right\rangle_{\wtd{\bw}}\right|,
    $$
    and hence
  \begin{align}\nonumber
    \sqrt{d(\wtd{\bw})} &=\left\langle\frac{\BF\wtd{\bq}-\wtd{\bp}}{\|\BF\wtd{\bq}-\wtd{\bp}\|_{\wtd{\BW}}},\BF\wtd{\bq}\right\rangle_{\wtd{\BW}}=\sum_{i=1}^s\sum_{j=1}^t \left\langle F_{ij}\wtd{\bq},\frac{F_{ij}\wtd{\bq}-\wtd{\bp}_{ij}}{\|F_{ij}\wtd{\bq}-\wtd{\bp}_{ij}  \|_{\wtd{\bw}}}\right\rangle_{\wtd{\bw}}\\\nonumber
    &\ge \sum_{i=1}^s\sum_{j=1}^t \left|\left\langle F_{ij}\wtd{\bq},\frac{ {\by}_{ij}}{\| {\by}_{ij}\|_{\wtd{\bw}}}\right\rangle_{\wtd{\bw}}\right| \ge \left|\sum_{i=1}^s\sum_{j=1}^t \left\langle F_{ij}\wtd{\bq},\frac{ {\by}_{ij}}{\| {\by}_{ij}\|_{\wtd{\bw}}}\right\rangle_{\wtd{\bw}}\right|\\
    &=\left|\left\langle\BF\wtd{\bq},\frac{ {\by}}{\| {\by}\|_{\wtd{\BW}}}\right\rangle_{\wtd{\BW}}\right|.\label{eq:squared}
  \end{align}
  Next, we   show \(\left|\langle\BF\wtd{\bq}, {\by}\rangle_{{\wtd\BW}}\right|=\frac{d(\bw)}{\gamma_{\beta}}\left|\wtd{\bq}^{\HH}W\bq\right|\), which together with \eqref{eq:squared} then leads to the desired result \eqref{eq:dw_lower_bound}. 
  Indeed, 
  \begin{align}\nonumber
    \left|\langle\BF\wtd{\bq}, {\by}\rangle_{\wtd{\BW}}\right|-\frac{d(\bw)}{\gamma_{\beta}}\left|\wtd{\bq}^{\HH}W\bq\right| &= \left|(\BF\wtd{\bq})^{\HH}\wtd{\BW}\frac{(\BF\bq-\bp).}{\mathbf{1}_g \otimes \boldsymbol{\tau}(\beta)}\right|-\frac{d(\bw)}{\gamma_{\beta}}\left|\wtd{\bq}^{\HH}W\bq\right| \\ &=\frac{1}{\gamma_{\beta}}\left(\left|\wtd{\bq}^{\HH}W\BF^{\HH}(\BF\bq-\bp)\right|-d(\bw)\left|\wtd{\bq}^{\HH}W\bq\right|\right).\nonumber
  \end{align}
Since \(\wtd{\bq} \in {\rm span}({\bf \Phi})\), by the second optimality \eqref{eq:verticle_coro1} in Corollary \ref{coro:verticle_coro1},   
  \begin{equation}\nonumber
    \BF^{\HH}(\BF\bq-\bp)-d(\bw)\bq\ \bot_{\bw}\ {\rm span}({\bf \Phi}) \Longrightarrow \wtd{\bq}^{\HH}W\left(\BF^{\HH}(\BF\bq-\bp)-d(\bw)\bq\right)=0,
  \end{equation}
  giving \(\left|\langle\BF\wtd{\bq}, {\by}\rangle_{\wtd{\BW}}\right|=\frac{d(\bw)}{\gamma_{\beta}}\left|\wtd{\bq}^{\HH}W\bq\right|\). By \eqref{eq:squared}, the proof is complete.
\end{proof}

\subsection{Convergence for matrix-valued polynomial minimax approximations}\label{subsec:beta1poly}
In this subsection, we   prove two interesting results of {\sf m-d-Lawson} for the polynomial case (i.e., \(d=0\)): (a)  $d(\bw^{(k+1)})\ge d(\bw^{(k)})$ for any $\beta\in [0,2]$, and (b) the choice $\beta=1$ is near-optimal in the sense that the lower bound in \eqref{eq:dw_lower_bound} attains its maximum when $\beta \ge  0$. 

For these properties, we note that when \(d=0\), then $q\equiv 1$ implying that \(\bq^{(k+1)} \equiv \bq^{(k)} \equiv \mathbf{1}_m \in \mathbb{R}^{m}\) and \(\left(\bq^{(k+1)}\right)^{\HH}W^{(k)}\bq^{(k)} \equiv 1\) for all \(k\). Thus the lower bound in \eqref{eq:dw_lower_bound} reduces to
\begin{equation}\label{eq:lower_bound_dw_poly}
    \sqrt{d(\bw^{(k+1)})} \ge d(\bw^{(k)})\frac{1}{\gamma^{(k)}_{\beta}\zeta^{(k)}_{\beta}}.
\end{equation}
Following  \cite[Section 4]{zhha:2025}, we   define a near-optimal Lawson exponent \(\beta_{*}^{(k)}\) as 
\begin{equation}\label{eq:nu_beta_minimization}
    \beta_{*}^{(k)}=\arg\min_{\beta \ge 0}\left(\gamma^{(k)}_{\beta}\zeta^{(k)}_{\beta}\right)^2.
\end{equation}
Omitting the subscript \(k\) in each quantity related with the \(k\)th iteration as in the proof of Lemma \ref{lem:lem_verticle1}, and recalling \(\tau_{\ell}=\|F(x_{\ell})-R(x_{\ell})\|_{\F}\) and \({\cal J}_{\wtd{\bw}}=\{\ell: \wtd{w}_{\ell} \ne  0, 1\le \ell\le m\}=\{\ell:{w}_{\ell}\tau_\ell \ne  0, 1\le \ell\le m\}\) in Proposition \ref{prop:index_update}, we obtain by $\wtd w_\ell =\frac{w_\ell \tau_\ell^\beta}{\gamma_\beta} ~(1\le \ell \le m)$ and $q\equiv 1$ that
\begin{align}\nonumber
    \nu(\beta)&:=\gamma_{\beta}^2\zeta_{\beta}^2 = \gamma_{\beta}^2 \sum_{i=1}^s\sum_{j=1}^t\left(\sum\limits_{\ell=1}^{m}\frac{\wtd{w}_{\ell}|f_{ij}(x_{\ell}) -p_{ij}(x_{\ell})|^2}{\tau_\ell^{2\beta}}\right) \\ \nonumber &= \gamma_{\beta} \sum\limits_{\ell=1}^{m} \frac{w_{\ell}}{\tau_\ell^{\beta}}\underbrace{\left(\sum_{i=1}^s\sum_{j=1}^t|f_{ij}(x_{\ell}) -p_{ij}(x_{\ell})|^2\right)}_{=\tau_\ell^2}
    \\
    \label{eq:nu_beta} &= \gamma_{\beta} \sum\limits_{\ell=1}^{m} \frac{w_{\ell}\tau_\ell^{2}}{\tau_\ell^{\beta}}    =  \underbrace{\left(\sum\limits_{\ell \in {\cal J}_{\wtd{\bw}}}w_\ell \tau_\ell^{\beta}\right)}_{=\gamma_\beta}\left(\sum\limits_{\ell \in {\cal J}_{\wtd{\bw}}}w_\ell   \tau_\ell^{2-\beta}\right).
\end{align}
Note that \(\nu(\beta)\) in \eqref{eq:nu_beta} takes the same expression form as in \cite{zhha:2025}; particularly $\nu(0)=\nu(2)=d(\bw^{(k)})$.  Therefore, we can straightforwardly generalize \cite[Proposition 4.2]{zhha:2025} to the case of matrix-valued polynomial minimax approximation:
\begin{proposition}\label{prop:converg_analysis_poly_lawson}
    For the matrix-valued polynomial minimax approximation, we have
    \begin{itemize}
        \item [(i)] the function \(\nu(\beta)\) given in \eqref{eq:nu_beta} is convex;
        \item [(ii)] \(\beta=1\) is the global minimizer of \eqref{eq:nu_beta_minimization}. In this sense, \(\beta=1\) achieves the maximum of the lower bound in \eqref{eq:lower_bound_dw_poly} and can be viewed as the near-optimal Lawson exponent for {\sf m-d-Lawson}  in Algorithm \ref{alg:mdLawson};
        \item [(iii)]   \(\forall \beta \in [0,2]\), the sequence $\{d(\bw^{(k)})\}_k$ of {\sf m-d-Lawson} satisfies \(d(\bw^{(k+1)}) \ge d(\bw^{(k)})\).
    \end{itemize}
\end{proposition}
\begin{proof}
With  \(\nu(\beta)\) given in \eqref{eq:nu_beta}, the proof is finished according to \cite[Section 4]{zhha:2025}. 
\end{proof}

\subsection{Convergence for matrix-valued rational minimax approximations}\label{subsec:convgrational}
We now analyze the convergence of   {\sf m-d-Lawson} (Algorithm \ref{alg:mdLawson})  for general rational minimax approximation. Unlike the polynomial case, a key challenge arises because the numerator \(\left|(\bq^{(k+1)})^{\HH}W^{(k)}\bq^{(k)}\right|\)  in the lower bound \eqref{eq:dw_lower_bound} also depends on the Lawson exponent $\beta$. However, owing to the structural similarities in the minimax framework, particularly the setting with a common denominator $q(x) \in \bbP_{d}$, the analysis in \cite[Section 5]{zhha:2025} can be extended to obtain the following result.

\begin{theorem}\label{thm:converg_mdl_gen}
    At the \(k\)th iteration  of  {\sf m-d-Lawson} in Algorithm \ref{alg:mdLawson}, for \(\bw^{(k)} \in {\cal S}\), assume \(d(\bw^{(k)})\) is a simple eigenvalue of the matrix pencil \((A_{\bw^{(k)}},B_{\bw^{(k)}})\) given in \eqref{eq:dual_GEP}.  Assume (\textbf{A1}) and (\textbf{A2}).  Suppose \(\tau^{(k)}_\ell:=\|F(x_{\ell})-R^{(k)}(x_\ell)\|_{\F}>0,\ \forall \ell \in {\cal J}_{\bw^{(k)}}:=\{j:w^{(k)}_{j} \ne 0, 1\le j\le m\}\). Then
    \begin{itemize}
        \item [(i)]   \(\exists\beta_{0}>0\) so that   \(\forall \beta \in (0,\beta_{0})\),  we have \(d(\bw^{(k+1)})\ge d(\bw^{(k)})\);
        \item [(ii)] for any sufficiently small \(\beta>0\),
        \begin{equation}\label{eq:ssssss}
            d(\bw^{(k+1)})=d(\bw^{(k)}) \Longrightarrow w^{(k)}_\ell \tau^{(k)}_\ell \left(\tau^{(k)}_\ell-c\right)=0,\ \forall 1 \le \ell \le m,
        \end{equation}
        where \(c=\sqrt{d(\bw^{(k)})} \le  \sqrt{e(R^{(k)})}=\max_{1\le \ell\le m}\tau^{(k)}_\ell\);
        \item [(iii)] for   item (ii), if additionally, \(c \ge \max_{\ell \not\in {\cal J}_{\bw^{(k)}}} \tau^{(k)}_\ell \), then \(R^{(k)}(x)\) is the minimax approximant of \eqref{eq:bestf0}.
    \end{itemize}
\end{theorem}
\begin{proof}
The proof adapts \cite[Theorem 5.2]{zhha:2025} with necessary adjustments. For completeness, we present the full argument here. The idea is to express and estimate the lower bound in \eqref{eq:dw_lower_bound} using the real parameter \(\beta\) around \(\beta=0\). For simplicity,  using the notation in the proof of Theorem \ref{thm:dw_lower_bound}, we consider the nontrivial case  \(d(\bw)>0\) (the case \(d(\bw)=0\) is trivial because $w_\ell \tau_\ell=0,~\forall 1\le \ell \le m$).

  Define \(W(\beta)={\rm diag}(w_1(\beta),\dots,w_m(\beta))\) with 
  \begin{equation}\nonumber
    w_j(\beta)=\frac{w_j\tau_j^{\beta}}{\gamma_{\beta}}=\frac{w_j\|F(x_j)-R(x_j)\|_{\F}^{\beta}}{\sum\limits_{\ell=1}^{m}w_{\ell}\|F(x_{\ell})-R(x_{\ell})\|_{\F}^{\beta}}
  \end{equation}
  for which we have \(w_{j}(0)=w_j\) and 
    $w_j^{\prime}(0)=w_j{\rm log}\tau_j-w_j\sum\limits_{\ell=1}^{m}w_{\ell}{\rm log}\tau_{\ell},\ \ \forall j \in {\cal J}_{\wtd{\bw}}.$

  In the following discussion, we can assume \(\bq(\beta)\) is continuously differentiable with respect to \(\beta\) around \(\beta=0\), where \(\bq(\beta)={\bf \Phi}\bb(\beta)\) and \(\ba(\beta) \in \mathbb{C}^{n}\), \(\bb(\beta)\in \mathbb{C}^{d+1}\) are the solution to \eqref{eq:Lagrangedualfun3} with \(W(\beta)\) (for more details, we refer to the proof of \cite[Theorem 5.2]{zhha:2025}). Hence,
   $\bq(\beta)=\bq(0)+\beta\bq^{\prime}(0)+O(\beta^2).$
  Since \(1 \equiv (\bq(\beta))^{\HH}W(\beta)\bq(\beta)\), differentiating it with respect to \(\beta\) and setting $\beta=0$ gives
  \begin{equation}\nonumber
    {\rm Re}((\bq^{\prime}(0))^{\HH}W(0)\bq(0))=-\frac{1}{2}(\bq(0))^{\HH}W^{\prime}(0)\bq(0);
  \end{equation}
  moreover, for any sufficiently small \(\beta>0\), we have
  \begin{align}\nonumber
    \left|(\bq(\beta))^{\HH}W(0)\bq(0)\right| & \ge \left|{\rm Re}((\bq(\beta))^{\HH}W(0)\bq(0))\right| \\ \nonumber &= \left|{\rm Re}(\left((\bq(0))^{\HH}+\beta \bq^{\prime}(0)^{\HH}+O(\beta^2)\right)W(0)\bq(0))\right| \\ \nonumber &= \left|(\bq(0))^{\HH}W(0)\bq(0)+\beta {\rm Re}((\bq^{\prime}(0))^{\HH}W(0)\bq(0))\right|+O(\beta^2) \\ \nonumber &= \left|1-\frac{\beta}{2}(\bq(0))^{\HH}W^{\prime}(0)\bq(0)\right|+O(\beta^2) \\ \nonumber &= 1-\frac{\beta}{2}\left(\sum\limits_{j \in {\cal J}_{\wtd{\bw}}}|q_j|^2w_j{\rm log}\tau_j-\sum\limits_{j \in {\cal J}_{\wtd{\bw}}}w_j{\rm log}\tau_j\right)+O(\beta^2),
  \end{align}
  where the last equality is due to \(\bq(0)=\bq\) and \(\sum\limits_{\ell=1}^{m}w_{\ell}|q_{\ell}|^2=1\). With this, we can write the lower bound in \eqref{eq:dw_lower_bound} as 
  \begin{align}\nonumber
    \what{\mu}(\beta) & := \frac{\left|(\bq(\beta))^{\HH}W(0)\bq(0)\right|}{\gamma_{\beta}\zeta_{\beta}} \\ \nonumber & \ge \frac{1-\frac{\beta}{2}\left(\sum\limits_{j \in {\cal J}_{\wtd{\bw}}}w_j|q_j|^2{\rm log}\tau_j-\sum\limits_{j \in {\cal J}_{\wtd{\bw}}}w_j{\rm log}\tau_j\right)}{\sqrt{\left(\sum\limits_{j \in {\cal J}_{\wtd{\bw}}}w_j\tau_j^{\beta}\right) \left(\sum\limits_{j \in {\cal J}_{\wtd{\bw}}}w_j|q_j|^2\tau_j^{2-\beta}\right)}}+O(\beta^2)   := \mu(\beta)+O(\beta^2)
  \end{align}
  locally at \(\beta=0\). Note $\what{\mu}(0)= \frac{1}{\sqrt{d(\bw(0))}}$. For \(\mu(\beta)\), by calculation, we have (with \(w_j(0)=w_j,\ \bq(0)=\bq\))
  \begin{align}\nonumber
    \mu^{\prime}(0)&=\frac{1}{2\sqrt{d(\bw)^3}}\left[\sum\limits_{j \in {\cal J}_{\wtd{\bw}}}w_j|q_j|^2\tau_j^2{\rm log}\tau_j-\left(\sum\limits_{j \in {\cal J}_{\wtd{\bw}}}w_j|q_j|^2\tau_j^2\right) \left(\sum\limits_{j \in {\cal J}_{\wtd{\bw}}}w_j|q_j|^2{\rm log}\tau_j\right)\right] \ge 0
  \end{align}
  where the last inequality is obtained by applying \cite[Lemma 5.1]{zhha:2025}. Moreover, \cite[Lemma 5.1]{zhha:2025} also says that if there is a pair \((i,j)\) such that \(i,j\in {\cal J}_{\wtd{\bw}}\) and \(w_iw_j(\tau_i-\tau_j)\ne 0\), then \(\mu^{\prime}(0)>0\). In that case, we know that there is a \(\beta_0>0\) such that \(\mu^{\prime}(\beta)>\frac{1}{2}\mu^{\prime}(0)\) and \(\frac{\beta}{2}\mu^{\prime}(0)+O(\beta^2)>0\) for any \(\beta \in (0,\beta_0)\), implying  $$\what{\mu}(\beta)>\what{\mu}(0)= \frac{1}{\sqrt{d(\bw(0))}},~\forall \beta\in (0,\beta_0).$$
  This shows that if  \(\exists(i,j)\) so that \(i,j \in {\cal J}_{\wtd{\bw}}\) and \(w_iw_j(\tau_i-\tau_j) \ne 0\), then a sufficiently small \(\beta>0\) leads to \(d(\wtd{\bw})=d(\bw(\beta)) \ge d(\bw(0))=d(\bw)\); conversely, for a sufficiently small \(\beta>0\),
  \begin{align}\nonumber
    d(\bw(\beta))=d(\bw(0)) & \Longrightarrow w_iw_j(\tau_i-\tau_j)=0,\ \forall i,j \in {\cal J}_{\wtd{\bw}}\\ & \Longrightarrow \tau_j={\rm a \ constant}\ c,\ \forall j \in {\cal J}_{\wtd{\bw}}.
  \end{align}
  Obviously, \(c \le \max_{1 \le \ell \le m}\tau_\ell=\sqrt{e(R)}\) and also
  \begin{equation}
    d(\bw)=\sum\limits_{j=1}^mw_j|q_j|^2\tau_j^2=\sum\limits_{j \in {\cal J}_{\wtd{\bw}}}w_j|q_j|^2\tau_j^2=c^2\sum\limits_{j \in {\cal J}_{\wtd{\bw}}}w_j|q_j|^2=c^2\sum\limits_{j=1}^mw_j|q_j|^2=c^2.
  \end{equation}
  Hence \(\sqrt{d(\bw)}=c \le \sqrt{e(R)}\),   proving items (i) and (ii).

  For item (iii), if we additionally have \(c \ge \max_{\ell \not\in {\cal J}_{\bw}}\tau_\ell\), then \(c=\sqrt{d(\bw)}=\sqrt{e(R)}\). According to Theorem \ref{thm:strongdualityRuttan}, we conclude that \(R^{(k)}(x)\) is the minimax approximant of \eqref{eq:bestf0}. In this case,   \eqref{eq:ssssss} can be written as 
  $  w_\ell\left(\tau_\ell-\sqrt{e(R)}\right)=0,\ \forall 1\le  \ell \le m,$
the complementary slackness property given in Theorem \ref{thm:complement}.
\end{proof}

\section{Numerical experiments}\label{sec:numerical}
To verify the numerical performance of the method {\sf m-d-Lawson}\footnote{See \url{https://ww2.mathworks.cn/matlabcentral/fileexchange/181757-m_d_lawson} for the MATLAB code of  {\sf m-d-Lawson}.} in Algorithm \ref{alg:mdLawson}, we implement it in MATLAB R2024b in double precision and carry out numerical experiments on a 14-inch MacBook Pro with an M3 Pro chip and 18Gb memory. 
The numerical examples are from \cite{gogu:2021,trai:2010,trms:2007,zhzy:2025}, which have been used to test the following methods for matrix-valued rational approximations: {\sf v-AAA-Lawson}\footnote{See \url{https://ww2.mathworks.cn/matlabcentral/fileexchange/174740-v-aaa-lawson} for the MATLAB code of {\sf v-AAA-Lawson}.} \cite{zhzy:2025}, the  block-AAA \cite{gogu:2021}, RKFIT \cite{begu:2017},  Loewner framework interpolation approach \cite{anli:2017,gova:2022,maan:2007} and the vector fitting (VF) approach   in \cite{gust:2009}. Codes of all the latter four   are available at \url{https://github.com/nla-group/block_aaa}.  

\begin{example}\label{eg:Eg1}
The first test problem is a toy example from \cite{gogu:2021}. The target is a symmetric matrix-valued rational function $F:\bbC\rightarrow \bbC^{2\times 2}$  given by 
$$F(x)=\left[\begin{array}{cc}\frac{2}{x+1} & \frac{3-x}{x^2+x-5} \\ \frac{3-x}{x^2+x-5} & \frac{2+x^2}{x^3+3x^2-1}\end{array}\right].
$$
It can be seen  that all entries $f_{ij}$ of $F$ can  be written as type $(5,6)$ rational functions $f_{ij}=p_{ij}/q$ with the denominator $q(x)=(x + 1)(x^2 + x -  5)(x^3 + 3x^2-1)\in \bbP_6$.   
To get {sample data  from $F$},  we use {$m  = 1000$ equally spaced samples} of $F$ in the interval $[1, 100]{\tt i}$ in the imaginary axis  to have $\{x_\ell,F(x_\ell)\}_{\ell=1}^{m}$.  To see the numerical performance of the tested methods, we also introduce a noise term $\sigma_{\ell}$ for each  $f_{ij}(x_{\ell})$ at node $x_{\ell}$ 
\[ \what f_{ij}(x_{\ell})  = f_{ij}(x_{\ell})+ \sigma_{\ell}, \quad i,j = 1,2, ~\mbox{and}~\ell=1,\dots,m,\]
where the real and imaginary part of $\sigma_{\ell}$ are drawn from a normal distribution with mean zero  and standard deviation (noise level) $\varsigma.$ 
\end{example}

We remark that  the current implementations of block-AAA\footnote{We notice that the order  $d$ of the denominator specified in block-AAA for approximating a matrix-valued function $F:\bbC\rightarrow \bbC^{n\times n}$ generally produces rational approximations of type $(dn,dn)$ rational functions $r_{ij}$. Therefore, in our case with $q\in \bbP_6$, we specify the  order  $3$  in block-AAA.}, RKFIT\footnote{RKFIT is an iterative method, and RKFIT($k$) refers to RKFIT with $k$ iterations.},  Loewner approach and VF provide rational approximations $r_{ij}$ of type $(d,d)$ for $f_{ij}$, while  {\sf m-d-Lawson} and  {\sf v-AAA-Lawson} are able to compute  $r_{ij}$ with user-defined types. Therefore, we set the exact types $(5,6)$ for the latter two, while choose type $(6,6)$ and default settings for others.   

For a computed approximation, accuracy is evaluated using both the root mean squared error (RMSE) in \cite{gogu:2021} given by \eqref{eq:RMSE}
and the maximum error 
$
e(R)=\max_{x_{\ell}\in {\cal X}}\|F(x_{\ell})-R(x_{\ell})\|_{\F}^2.
$
In Table \ref{Tab:toy1-a}, we report the average {\tt RMSE}  and $e(R)$ over 10 random tests. Note {\sf m-d-Lawson}(10) and {\sf v-AAA-Lawson}(10) mean that the maximal number of Lawson iterations is $\ell_{\max}=10$ for  {\sf m-d-Lawson} and {\sf v-AAA-Lawson}, respectively.  
 
\begin{table}[h!!!]
\caption{The average {\tt RMSE}  and $\sqrt{e(R)}$ over 10 random tests for Example \ref{eg:Eg1}}
\begin{center}\resizebox{150mm}{26mm}{\tabcolsep0.08in
\begin{tabular}{|c|c|c|c|c|c|c|}\hline
Method &{\sf m-d-Lawson}(10)  &{\sf v-AAA-Lawson}(10) &  block-AAA  &  RKFIT(10)  &  Loewner & VF\\
 {\tt RMSE} & type (5,6)  & type (5,6)  &   type (6,6)   &  type (6,6) &   type (6,6)&   type (6,6) \\
\hline
\hline
$   \varsigma = 0 $   &        1.9455e-14  &        4.6607e-15   &      9.3332e-14 &      4.8719e-13 &      4.0952e-03 &      7.1585e-15  \\  
 $   \varsigma = 10^{-10}  $   &        3.2919e-10  &        9.0328e-10   &      4.6273e-07 &      8.8925e-09 &      5.7620e-03 &      8.9246e-09  \\  
 $   \varsigma = 10^{-8}  $   &        3.3406e-08  &        8.2984e-08   &      1.7531e-05 &      8.8809e-07 &      5.1548e-03 &      8.9133e-07  \\  
 $   \varsigma = 10^{-6}  $   &        6.0453e-06  &        1.0248e-05   &      7.8522e-04 &      8.9351e-05 &      5.4559e-03 &      8.9615e-05  \\  
 $   \varsigma = 10^{-4}  $   &        3.4093e-04  &        3.4481e-04   &      3.9847e-01 &      8.9501e-03 &      9.8599e-02 &      9.6356e-03  \\  
 \hline\hline
 Method &{\sf m-d-Lawson}(10)  &{\sf v-AAA-Lawson}(10) &  block-AAA  &  RKFIT(10)  &  Loewner & VF\\
$\sqrt{e(R)}$ & type (5,6)  & type (5,6)  &   type (6,6)   &  type (6,6) &   type (6,6)&   type (6,6) \\
\hline
$   \varsigma = 0 $   &        2.8788e-14  &        2.7243e-14   &      1.7939e-14 &      8.8405e-14 &      3.3193e-03 &      2.5466e-15  \\  
 $   \varsigma = 10^{-10}  $   &        6.4018e-10  &        2.5945e-08   &      1.5342e-07 &      5.2593e-10 &      4.9532e-03 &      5.2486e-10  \\  
 $   \varsigma = 10^{-8}  $   &        6.6641e-08  &        2.3500e-06   &      1.0511e-05 &      5.1277e-08 &      4.2331e-03 &      5.1317e-08  \\  
 $   \varsigma = 10^{-6}  $   &        1.6581e-05  &        2.8447e-04   &      1.1752e-04 &      5.0985e-06 &      4.6080e-03 &      5.1365e-06  \\  
 $   \varsigma = 10^{-4}  $   &        9.1874e-04  &        2.6519e-03   &      2.8424e-01 &      5.2663e-04 &      7.2616e-02 &      1.3503e-03  \\  
 \hline
\end{tabular}
}
\end{center}
\label{Tab:toy1-a}
\end{table}
  
\begin{example}\label{eg:Eg2}
Our second test problem is a symmetric matrix-valued  function $F:\bbC\rightarrow \bbC^{2\times 2}$ from \cite{gogu:2021}  
$$F(x)=\left[\begin{array}{cc}\frac{x(1-2x\cot 2x)}{\tan x-x} +10& \frac{x(2x-\sin 2x)}{\sin (2x) (\tan x -x)} \\ \frac{x(2x-\sin 2x)}{\sin (2x) (\tan x -x)}& \frac{x(1-2x\cot 2x)}{\tan x-x} +4\end{array}\right].
$$
This function is a 2-by-2 submatrix of the buckling example in \cite{hint:2019}, arising from the nonlinear eigenvalue problem in a  buckling plate model. As each $f_{ij}(x)$ is not rational functions, we approximate $f_{ij}$ 
 by a type $(d,d)$ rational function $r_{ij}$ with $d=10$ and  with a common denominator.
In the same settings for the noise as Example \ref{eg:Eg1}, we  choose $m=500$ logarithmically spaced sampling points in the interval $[10^{-2}, 10]{\tt i}$, and report the accuracy of each method in Table \ref{Tab:toy2-a}.   
\end{example}

\begin{table}[h!!!]
\caption{The average {\tt RMSE}  and $\sqrt{e(R)}$ over 10 random tests for Example \ref{eg:Eg2} with $d=10$}
\begin{center}\resizebox{150mm}{26mm}{\tabcolsep0.08in
\begin{tabular}{|c|c|c|c|c|c|c|}\hline
Method &{\sf m-d-Lawson}(10)  &{\sf v-AAA-Lawson}(10) &  block-AAA  &  RKFIT(10)  &  Loewner & VF\\
 {\tt RMSE} & type (10,10)  & type (10,10)  &   type (10,10)   &  type (10,10) &   type (10,10)&   type (10,10) \\
\hline
\hline
 $   \varsigma =0$   &        4.2986e-10  &        7.6848e-10   &      3.1982e-09 &      6.5385e-09 &      2.0619e-03 &      3.3010e-05  \\  
 $   \varsigma = 10^{-10}  $   &        5.0496e-10  &        7.7655e-10   &      2.9644e-07 &      9.0328e-09 &      2.6409e-03 &      3.3010e-05  \\  
 $   \varsigma = 10^{-8}  $   &        3.3155e-08  &        1.0942e-07   &      1.6510e-05 &      6.2993e-07 &      5.0303e-03 &      3.3015e-05  \\  
 $   \varsigma = 10^{-6}  $   &        7.0065e-06  &        1.0586e-05   &      1.1819e-03 &      6.2831e-05 &      4.0850e-01 &      7.1128e-05  \\  
 $   \varsigma = 10^{-4}  $   &        4.7199e-04  &        8.9167e-04   &      1.4120e-01 &      6.5554e-03 &      1.4752e+02 &      6.4432e-03  \\  
 \hline\hline
 Method &{\sf m-d-Lawson}(10)  &{\sf v-AAA-Lawson}(10) &  block-AAA  &  RKFIT(10)  &  Loewner & VF\\
 $\sqrt{e(R)}$ & type (10,10)  & type (10,10)  &   type (10,10)   &  type (10,10) &   type (10,10)&   type (10,10)  \\
\hline
 $   \varsigma = 0  $   &        6.3915e-10  &        1.4332e-09   &      2.8392e-10 &      1.7800e-09 &      1.3051e-03 &      1.0515e-05  \\  
 $   \varsigma = 10^{-10}  $   &        8.7366e-10  &        1.7199e-09   &      1.7874e-07 &      1.7818e-09 &      1.6258e-03 &      1.0515e-05  \\  
 $   \varsigma = 10^{-8}  $   &        1.2094e-07  &        2.4175e-07   &      5.1816e-06 &      5.0084e-08 &      2.9495e-03 &      1.0514e-05  \\  
 $   \varsigma = 10^{-6}  $   &        3.4276e-05  &        2.6732e-05   &      5.1681e-04 &      5.0782e-06 &      2.2459e-01 &      1.0911e-05  \\  
 $   \varsigma = 10^{-4}  $   &        2.1204e-03  &        2.5433e-03   &      7.2350e-02 &      5.8388e-04 &      7.4088e+01 &      5.3505e-04  \\  
 \hline
\end{tabular}
}
\end{center}
\label{Tab:toy2-a}
\end{table}

With this example, we next demonstrate the property of complementary slackness \eqref{eq:complement} tied
to optimality at extremal points. To this end, for type (6,6),  in Figure \ref{Fig:exam2_rat66errs}, we plot the error $e_{ij}(x_{\ell}) = |f_{ij}(x_{\ell}) - r_{ij}(x_{\ell})|$ for the individual $r_{ij}$, the F-norm error $ \| F(x_{\ell}) - R(x_{\ell})\|_{\F}$  as well as the 
sequences of $\left\{\sqrt{e(R^{(k)})}\right\}_k$ and  $\left\{\sqrt{d(\boldsymbol{w}^{(k)})}\right\}_k$. Noteworthy observations from the bottom-right subfigure of Figure \ref{Fig:exam2_rat66errs} are that $\left\{\sqrt{d(\bw^{(k)}}\right\}_k$ increases monotonically and the duality gap $\sqrt{e(R^{(k)})}-\sqrt{d(\bw^{(k)})}$ converges to zero as the iteration count $k$ increases. Furthermore, the top-right subfigure in Figure \ref{Fig:exam2_rat66errs} reveals another  interesting observation: exactly 11 extreme points attain the maximum approximation error $e(R)$ and
the complementary slackness \eqref{eq:complement} property is demonstrated numerically as 
$$
\max_{1\le \ell\le m}\left|w_\ell (e(R)-\|F(x_\ell)-R(x_\ell)\|_{\F}^2)\right|=6.6680\times 10^{-14}.
$$
This aligns with a key property of minimax approximation which is related to the famous equioscillation property \cite[Theorem 24.1]{tref:2019a} for the real scalar case. In contrast, such extreme points are absent in the other methods compared in Figure \ref{Fig:exam2_rat66compares}.

\begin{figure}[h!!!]
	\centering
	\includegraphics[width=0.97\columnwidth, height = 5.7cm]{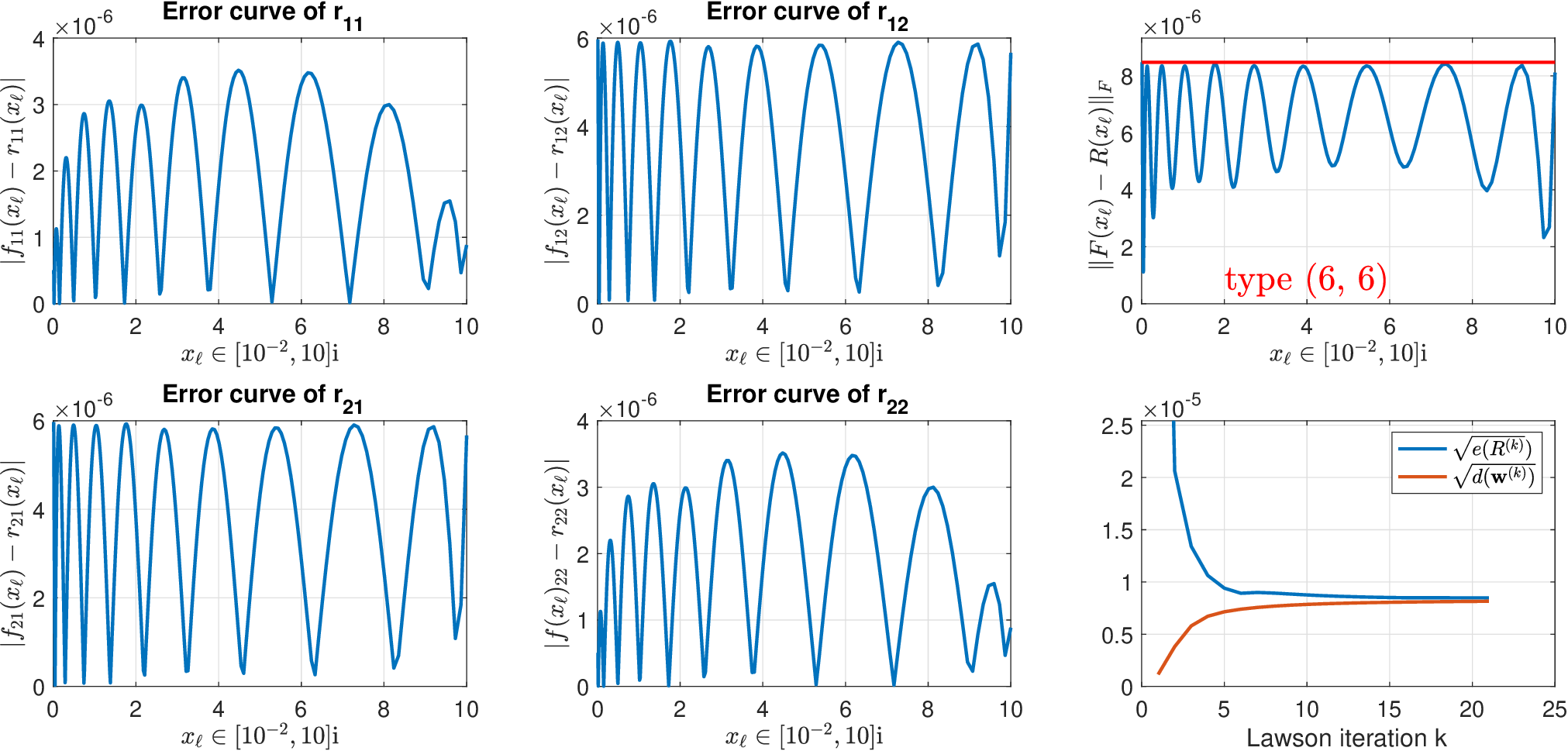}
	\caption{\small The errors of the approximation $R(x) $ with type (6,6) computed by {\tt m-d-Lawson(20)} for Example \ref{eg:Eg2}. The left two columns show the 
	absolute errors $e_{ij}(x_{\ell}) = |f_{ij}(x_{\ell}) -r_{ij}(x_{\ell})|$ of $r_{ij}(x),$  the up-right   represents the F-norm error $ \| F(x_{\ell}) - R(x_{\ell})\|_{\F}$  at each $x_\ell$ and 
	the bottom-right  demonstrates that the duality gap $\sqrt{e(R^{(k)})}-\sqrt{d(\bw^{(k)})}$ decreases as  $k$ increases. }
\label{Fig:exam2_rat66errs}\end{figure}

\begin{figure}[h!!!]
	 \centering
	\includegraphics[width=0.97\columnwidth, height = 5.7cm]{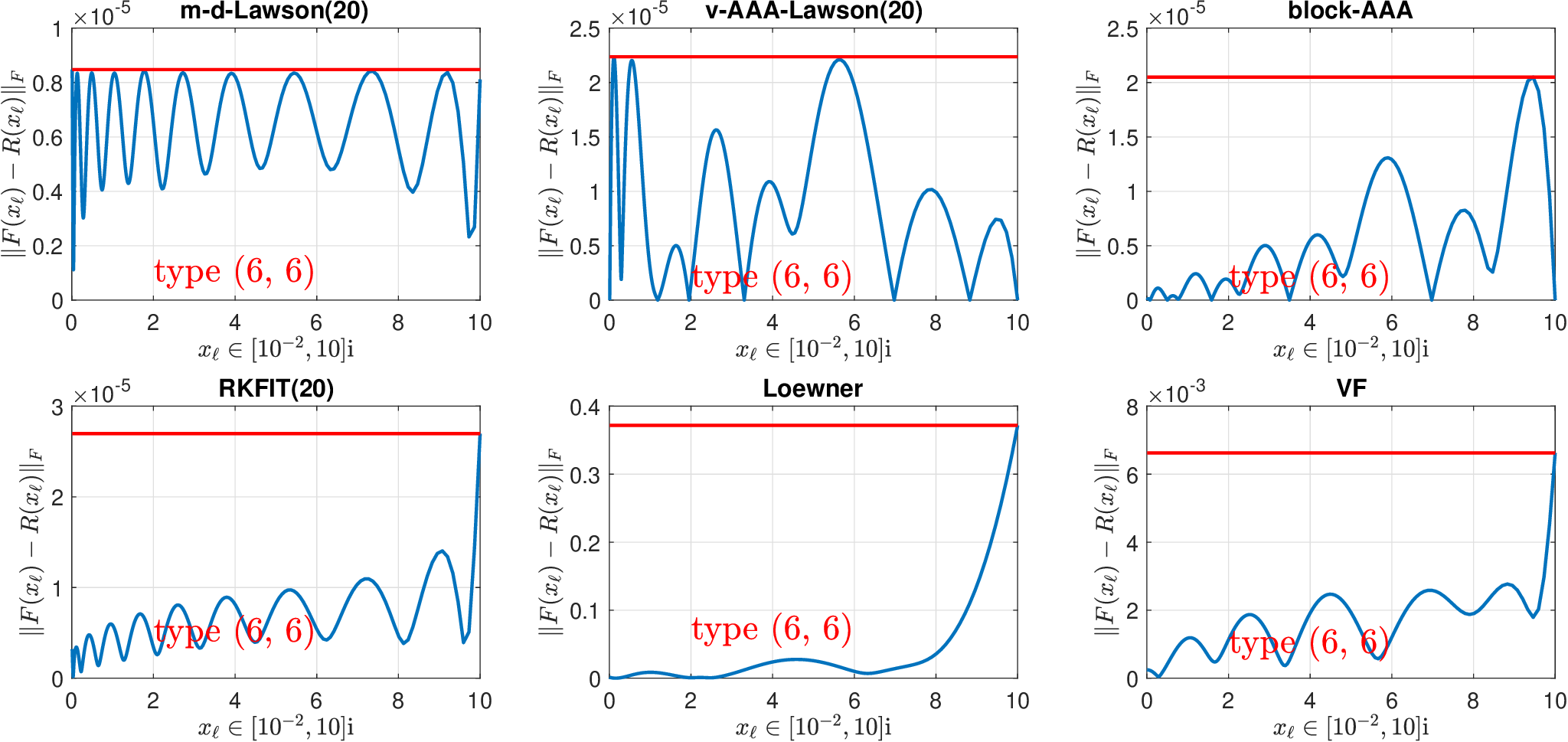} 
	\caption{\small The F-norm errors $ \| F(x_{\ell}) - R(x_{\ell})\|_{\F}$  of the approximation $R(x) $ with type (6,6) computed by different methods for Example \ref{eg:Eg2}. }
\label{Fig:exam2_rat66compares}\end{figure}

We also use this example to demonstrate the performance of {\tt m-d-Lawson} for computing the best matrix-valued  {\it polynomial} approximation of $F(x)$. In particular, we set the degree $d = 0$ for the denominator  and $n_{ij}=12$ for all numerators;  thus $R(x)$ becomes a matrix-valued  polynomial
\[ P(x) = 
\begin{bmatrix}
p_{11}(x) & p_{12}(x) \\
p_{21}(x) & p_{22}(x)
\end{bmatrix}:\bbC\rightarrow \bbC^{2\times 2}, ~p_{ij}\in \bbP_{12}.
\] 
In this case, similarly, we report   the error $e_{ij}(x_{\ell}) = |f_{ij}(x_{\ell}) - p_{ij}(x_{\ell})|$,  $ \| F(x_{\ell}) - P(x_{\ell})\|_{\F}$  as well as the sequences of $\left\{ \sqrt{e(P^{(k)})}\right\}_k$ and  $\left\{\sqrt{d(\boldsymbol{w}^{(k)})}\right\}_k$ in Figure \ref{Fig:exam2_polyerrs}. Comparing with Figure \ref{Fig:exam2_rat66errs}, we observe that the polynomial    and  rational cases share a similar performance with 
$$
\max_{1\le \ell\le m}\left|w_\ell (e(R)-\|F(x_\ell)-R(x_\ell)\|_{\F}^2)\right|=2.2900\times 10^{-11}.
$$


\begin{figure}[h!!!]
	\centering
	\includegraphics[width=0.97\columnwidth, height = 5.7cm]{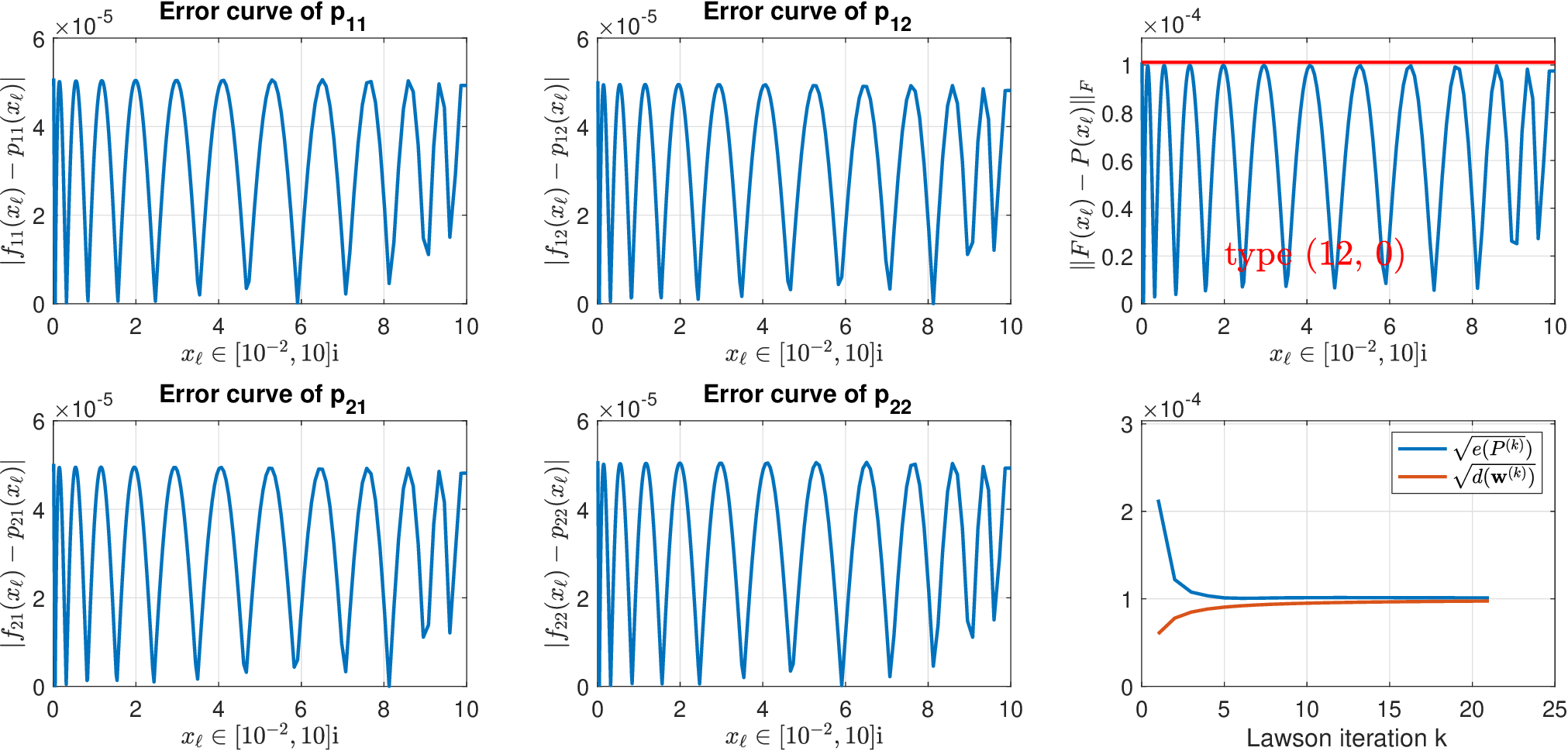}
	\caption{\small The errors of the polynomial approximation $P(x) $ with type (12,0) computed by {\tt m-d-Lawson(20)} for Example \ref{eg:Eg2}. The left two columns show the 
	absolute errors $e_{ij}(x_{\ell}) = |f_{ij}(x_{\ell}) -p_{ij}(x_{\ell})|$ of $p_{ij}(x),$  the up-right   represents the F-norm error $ \| F(x_{\ell}) - P(x_{\ell})\|_{\F}$  at each $x_\ell$, and the bottom-right  demonstrates that the duality gap $\sqrt{e(P^{(k)})}-\sqrt{d(\bw^{(k)})}$ decreases as  $k$ increases. }
\label{Fig:exam2_polyerrs}\end{figure}

A final illustration with Example \ref{eg:Eg2} is to demonstrate the errors of {\tt RMSE} and $e(R)$ with various types other than $(d,d)$.  In particular, we apply   {\sf v-AAA-Lawson}(20) and {\sf m-d-Lawson}(20) to two cases: $(n_{ij}, 20)$  and $(n_{ij},n_{ij}+2)$  with $n_{ij}=1,\dots,20$. Figure \ref{Fig:toy3-a} illustrates {\tt RMSE} and $e(R)$ for these cases, which implies that {\sf m-d-Lawson}(20) generally produces more accurate approximation than {\sf v-AAA-Lawson}(20). 

\begin{figure}[h!!!]
\centering
	\includegraphics[width=0.95\columnwidth, height = 7.5cm]{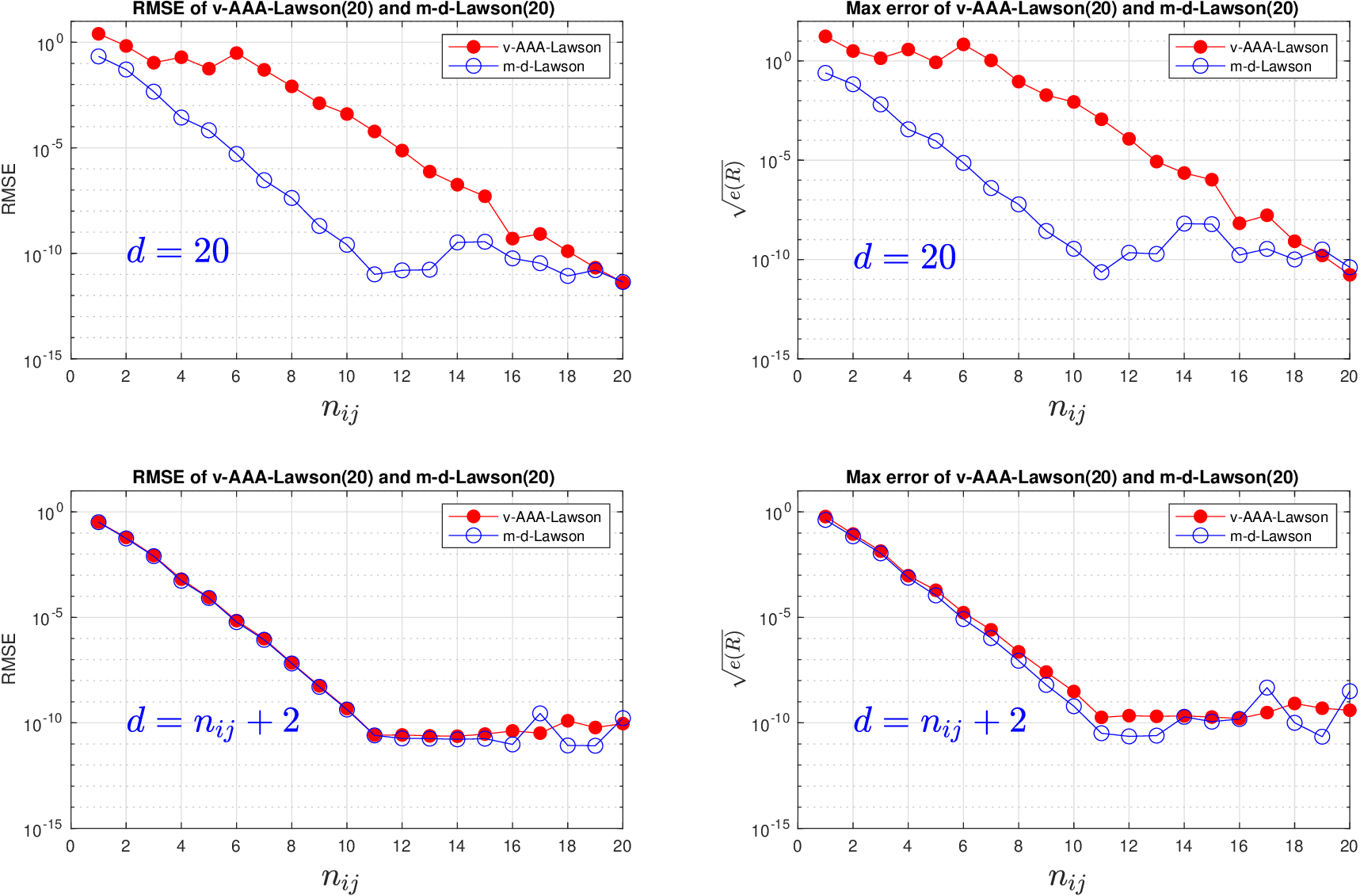}
	\caption{\small {\tt RMSE} (left column) and $\sqrt{e(R)}$ (right column) from  {\sf v-AAA-Lawson}(20) and {\sf m-d-Lawson}(20)  based on rational approximation types $(n_{ij}, 20)$ and $(n_{ij}+2, n_{ij})$ with $n_{ij}=2,\dots,20$ for Example \ref{eg:Eg2}.}
\label{Fig:toy3-a}\end{figure}

\begin{example}\label{eg:Eg3}
We now test the case arising from the designed duplexer device with given poles and zeros in \cite{trai:2010,trms:2007}. In this example, three rational functions, i.e., scattering parameters, 
\begin{equation}\label{eq:S}
s_{11}(x)=\frac{p_N(x)}{p_{D}(x)}, ~~s_{21}(x)=\frac{p_{T}(x)}{p_{D}(x)},~\mbox{and}~~ s_{31}(x)=\frac{p_{R}(x)}{p_{D}(x)},
\end{equation}
need to be reconstructed from sampled responses measured at a set of frequencies.
Here, $p_{N}\in \bbP_{20},~p_{D}\in \bbP_{20},~p_T\in \bbP_{12},~p_R\in \bbP_{12}$ are complex polynomials, and the sample nodes are from the line $x={\tt i}\omega$ with $\omega\in \bbR$ being the frequency. In our application, there are $401$ (i.e., $m=400$) equidistant sampled  frequencies ${\cal X}=\{(-2+\frac{\ell}{400}){\tt i}\}_{\ell=0}^{400}$ as well as responses $\{s_{j1}(x_\ell)\}_{\ell=0}^{400}$ for $j=1,2,3$.  The zeros of associated  polynomials of $p_{D}(x),~p_{N}(x),~p_T(x)$ and $p_R(x)$ are given in Table \ref{Tab:polzer}. 
\end{example}

To apply {\sf v-AAA-Lawson} and {\sf m-d-Lawson}, we set $s=3$ and $t=1$ to form a vector-valued complex function $F(x)=[s_{11}(x), s_{21}(x), s_{31}(x)]^{\T}: \bbC\rightarrow \bbC^{3}$.  In order to verify the  robustness, we put a random perturbation to the responses as follows
\[ \what s_{j1}(x_\ell)  = s_{j1}(x_\ell)+ \sigma_{\ell}, \quad j = 1,2,3, ~\mbox{and}~\ell=0,\dots,m,\]
where $\sigma_{\ell}$ is the noise with noise level $\varsigma$ described  in Example \ref{eg:Eg1}. 
In the right column of Figure \ref{Fig:cauchy_recon-b}, we plot the fitting curves $y_j(x)=20\log_{10}|s_{j1}(x)|~(j=1,2,3)$  as well as their rational reconstructions computed  from {\sf v-AAA-Lawson}(10) and {\sf m-d-Lawson}(10) for the noise level  $\varsigma=10^{-8}$; the left two columns plot the roots (solid points) given in Table \ref{Tab:polzer} and the computed ones (circles) from {\sf v-AAA-Lawson}(10) and {\sf m-d-Lawson}(10).  
\begin{table}[h!!!]
\caption{\small Roots of the associated polynomials of a duplexer from \cite{trai:2010,trms:2007}}
\begin{center}
\resizebox{125mm}{35mm}{\tabcolsep0.2in
\begin{tabular}{|c|c|c|c|}\hline
 \text{roots of} $p_{D}(x)$ &  \text{roots of} $p_{N}(x)$ &  \text{roots of} $p_{T}(x)$ &  \text{roots of} $p_{R}(x)$ \\
\hline
  -0.0396-1.0564{\tt i} & -0.0050-0.0173{\tt i} &  0.0130-0.0070{\tt i} & -0.0071+0.1128{\tt i}\\
-0.1099-0.9971{\tt i} &  0.0404+0.0037{\tt i} & -0.0048+0.0181{\tt i} & -0.0228+0.1335{\tt i}\\
-0.1452-0.8985{\tt i} & -0.0221-0.0921{\tt i} & -0.0309-0.0016{\tt i} &  0.0063+0.1481{\tt i}\\
-0.0349+1.0418{\tt i} & -0.0009+0.1261{\tt i} & -0.0098-0.0443{\tt i} & -0.0461+0.1752{\tt i}\\
-0.0990+0.9864{\tt i} & -0.0151+0.1542{\tt i} & -0.0501-0.0915{\tt i} & -0.0068+0.2229{\tt i}\\
-0.1481-0.7199{\tt i} & -0.0217-0.2031{\tt i} & -0.0647-0.2039{\tt i} & -0.0740+0.2899{\tt i}\\
-0.1404+0.8811{\tt i} &  0.0231+0.2331{\tt i} & -0.0994-0.3527{\tt i} & -0.0735+0.4291{\tt i}\\
-0.1637+0.7315{\tt i} & -0.0223+0.2925{\tt i} & -0.0890-0.5222{\tt i} & -0.0954+0.5850{\tt i}\\
-0.1497-0.5206{\tt i} & -0.0621-0.3582{\tt i} & -0.0858-0.7178{\tt i} & -0.0809+0.7525{\tt i}\\
-0.1668+0.5697{\tt i} &  0.0310+0.4292{\tt i} & -0.0650-0.9007{\tt i} & -0.0765+0.8988{\tt i}\\
-0.1388-0.3422{\tt i} & -0.0142-0.5192{\tt i} & -0.0527-1.0230{\tt i} & -0.0430+1.0096{\tt i}\\
-0.1484+0.4140{\tt i} & -0.0309+0.5772{\tt i} & -0.0072-1.1030{\tt i} & -0.0053+1.1038{\tt i}\\
\hline
-0.1170+0.2846{\tt i} & -0.0249-0.7174{\tt i} & & \\
-0.0999-0.1895{\tt i} &  0.0352+0.7185{\tt i} & & \\
-0.0767+0.1975{\tt i} & -0.0281+0.8746{\tt i} & & \\
-0.0386+0.1414{\tt i} &  0.0049-0.9350{\tt i} & & \\
-0.0111+0.1166{\tt i} &  0.0957-0.9407{\tt i} & & \\
-0.0636-0.0772{\tt i} &  0.0497+0.9520{\tt i} & & \\
-0.0427+0.0014{\tt i} & -0.0163+1.0071{\tt i} & & \\
-0.0200-0.0024{\tt i} & -0.0299-1.0232{\tt i} & & \\
\hline
\end{tabular}\label{Tab:polzer}
}
\end{center}
\label{table02}
\end{table}%

\begin{figure}[h!!!]
	\centering
	\subfigure{ 
		\includegraphics[width=0.96\columnwidth, height = 6.2cm]{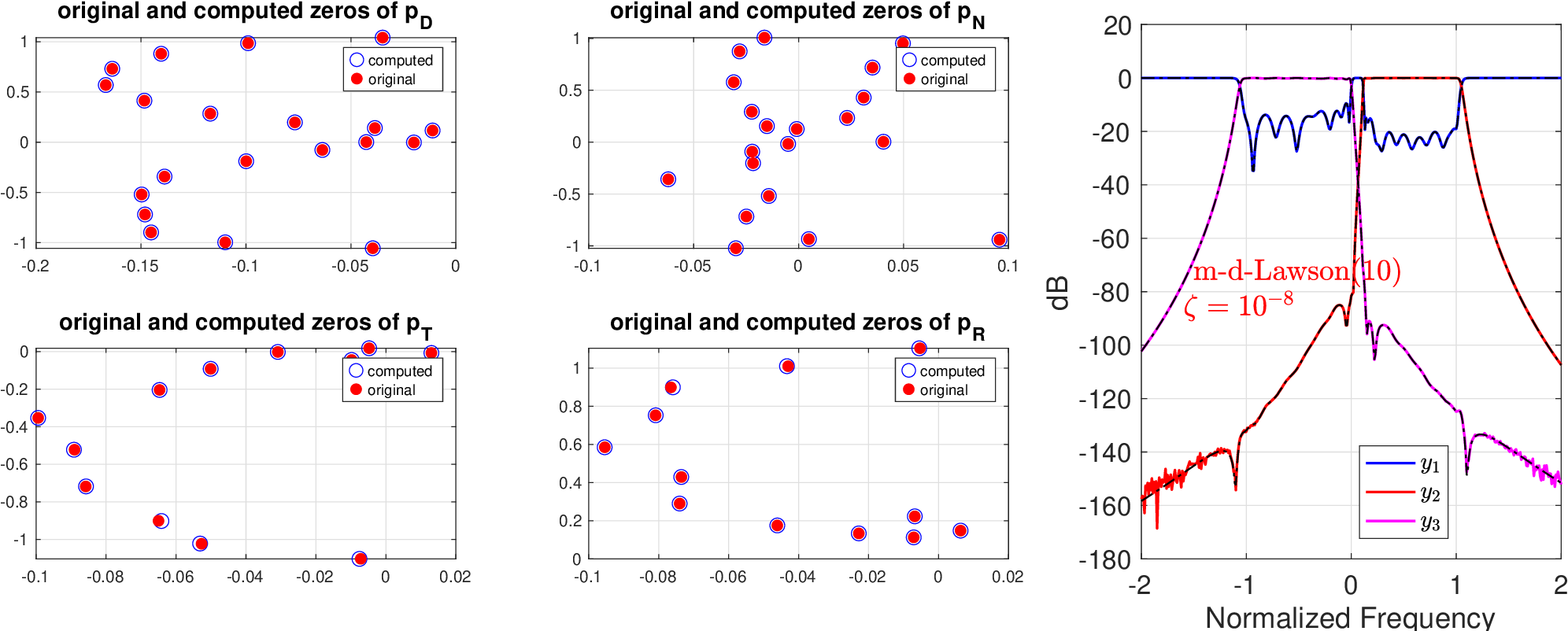}
	} \\
		 \subfigure{ 
		\includegraphics[width=0.96\columnwidth, height = 6.2cm]{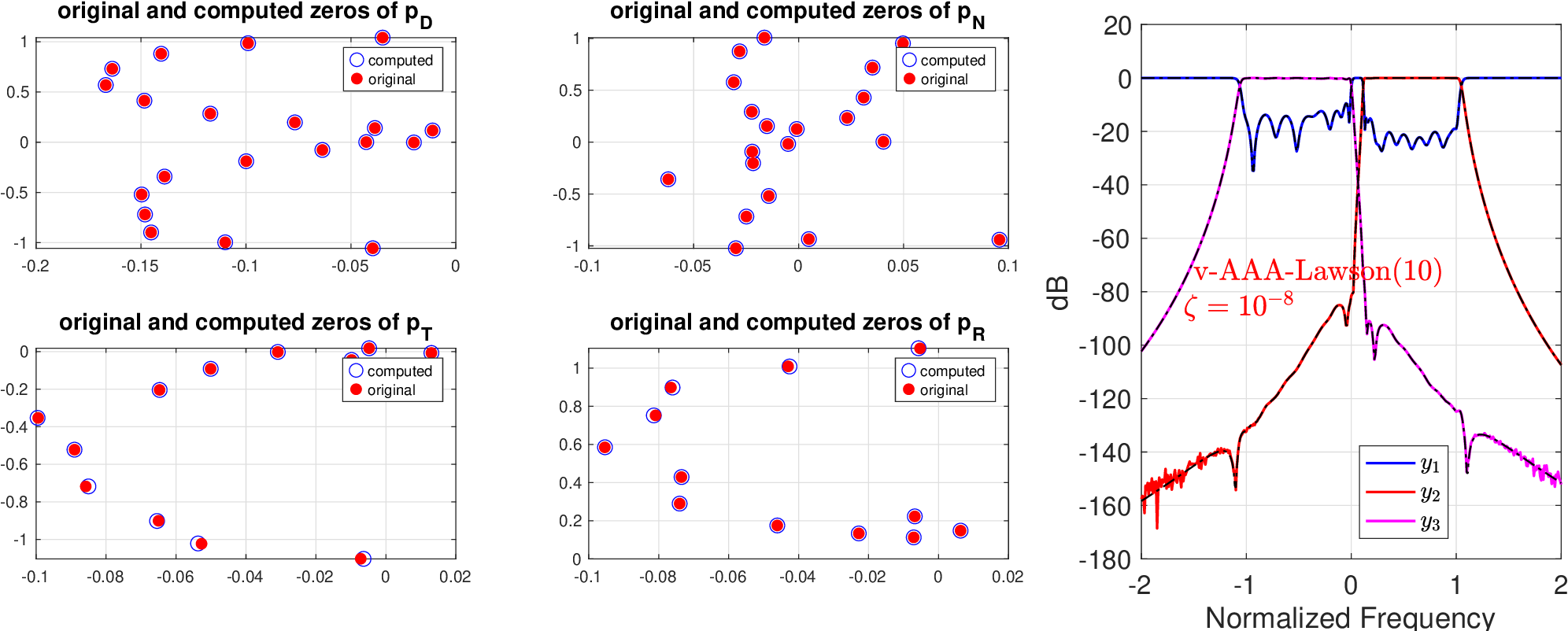}
	}
	\caption{\small Synthesized  $y_j(x)$ (thick line in the right column) for $j=1,2,3$  and fitted responses (thin dashed line  in the right column)  from  {\sf m-d-Lawson}(10) (top) and {\sf v-AAA-Lawson}(10) (bottom) with noise level $\varsigma=10^{-8}$.  The left two columns plot the roots (solid points) given in Table \ref{Tab:polzer} and the computed ones (circles).}
\label{Fig:cauchy_recon-b}\end{figure}

\section{Conclusions}\label{sec:conclusion}
Rational approximation of matrix-valued functions plays an important role in various applications, including nonlinear eigenvalue problems \cite{guti:2017}. Although several numerical methods based on discrete point sets have been proposed, most lack a well-defined optimization objective, resulting in approximations without guaranteed optimality.

In this paper, inspired by classical minimax approximation for scalar functions, we rigorously formulate a matrix-valued rational minimax approximation problem \eqref{eq:bestf0}. Exploiting recent advances in duality-based minimax approximation \cite{yazz:2023,zhha:2025,zhyy:2025}, we reformulate \eqref{eq:bestf0} as a max-min dual problem, which yields several key advantages:
\begin{itemize}
\item Computational tractability: The dual problem reduces to a real-valued optimization over the probability simplex, allowing us to extend the {\sf d-Lawson} iteration \cite{zhyy:2025} to the proposed {\sf m-d-Lawson} method.
\item Convergence guarantees: We establish the convergence of {\sf m-d-Lawson} for both matrix-valued polynomial and rational approximations, rigorously analyzing the influence of the Lawson exponent $\beta$.
\item Theoretical insights: Our duality analysis provides a sufficient condition (Theorem \ref{thm:strongdualityRuttan}) ensuring that the dual solution coincides with the best approximant for \eqref{eq:bestf0}. Furthermore, we characterize optimal solutions via complementary slackness for extreme points and strong duality.
\end{itemize}
Numerical experiments demonstrate that {\sf  m-d-Lawson} can  solve \eqref{eq:bestf0} efficiently, offering a new computational framework for matrix-valued rational approximation.

{\small
\def\noopsort#1{}\def\l{\char32l}\def\v#1{{\accent20 #1}}
  \let\^^_=\v\def\hbk{hardback}\def\pbk{paperback}
\providecommand{\href}[2]{#2}
\providecommand{\arxiv}[1]{\href{http://arxiv.org/abs/#1}{arXiv:#1}}
\providecommand{\url}[1]{\texttt{#1}}
\providecommand{\urlprefix}{URL }

}
     

%
\end{document}